\numberwithin{equation}{section}
\theoremstyle{plain}
\newtheorem{thrm}{Theorem}[section]
\newtheorem{lemma}[thrm]{Lemma}
\newtheorem{prop}[thrm]{Proposition}
\newtheorem{remark}{Remark}
\numberwithin{equation}{section}
	\def\MR#1{\href{https://urldefense.com/v3/__http://www.ams.org/mathscinet-getitem?mr=*1*7D*7BMR-*1__;IyUlIw!!DZ3fjg!oHzqzx-fk_p2DpzqdMBj54OAPH9BYzIUMFry2bFVial4aXytl4As9A_7US17LHEg$ }}
	\def\ARXIV#1{\href{https://urldefense.com/v3/__https://arxiv.org/abs/*1*7D*7BarXiv:*1__;IyUlIw!!DZ3fjg!oHzqzx-fk_p2DpzqdMBj54OAPH9BYzIUMFry2bFVial4aXytl4As9A_7UeWlCDxg$ }}
\def\DOI#1{\href{https://urldefense.com/v3/__https://doi.org/*1*7D*7Bdoi:*1__;IyUlIw!!DZ3fjg!oHzqzx-fk_p2DpzqdMBj54OAPH9BYzIUMFry2bFVial4aXytl4As9A_7UVT3CNkl }}
\def\P{\mathbb{P} }
\def\R{\mathbb{R} }
\def\N{\mathbb{N} }
\def\E{\mathbb{E} }
\def\F{\mathcal{F}}
\begin{document}
\title[Branching Brownian motion in a periodic environment]
{Branching Brownian motion in a periodic environment and
existence of pulsating travelling waves}
\thanks{The research of this project is supported by the National Key R\&D Program of China (No. 2020YFA0712900).}
\author[Y.-X. Ren, R. Song and F. Yang]{Yan-Xia Ren, Renming Song and Fan Yang}
\address{Yan-Xia Ren\\ LMAM School of Mathematical Sciences \& Center for
Statistical Science\\ Peking University\\ Beijing 100871\\ P. R. China}
\email{yxren@math.pku.edu.cn}
\thanks{The research of Y.-X. Ren is supported in part by NSFC (Grant Nos. 12071011  and 11731009) and LMEQF
}
\address{Renming Song\\ Department of Mathematics\\ University of Illinois at Urbana-Champaign \\ Urbana \\ IL 61801\\ USA}
\email{rsong@illinois.edu}
\thanks{The research of R. Song is supported in part by a grant from the Simons Foundation (\#429343, Renming Song)}
\address{Fan Yang\\ School of Mathematical Sciences \\ Peking University\\ Beijing 100871\\ P. R. China}
\email{fan-yang@pku.edu.cn}
\begin{abstract}
We study the limits of the additive and derivative martingales of one-dimensional branching Brownian motion in a periodic environment.
Then we prove the existence of pulsating travelling wave solutions of the corresponding F-KPP equation in the supercritical and critical cases by representing the solutions probabilistically in terms of the limits of  the additive and derivative martingales.
We also prove that there is no pulsating travelling wave solution in the subcritical case.
Our main tools are the spine decomposition and martingale change of measures.
\end{abstract}
\maketitle

\noindent
{\bf AMS 2020 Mathematics Subject Classification}: Primary: 60J80; Secondary 35C07

\noindent
{\bf Keywords and phrases}: Branching Brownian motion; periodic environment; F-KPP equation; pulsating travelling waves; Bessel-3 process; Brownian motion; spine decomposition
\section{Introduction}
\subsection{Background}\label{sec:BGD}
A classical branching Brownian motion (BBM) in $\R$ is constructed as follows. Initially there is a single particle at the origin of the real line. This particle moves as a standard Brownian motion $B = \{B(t), t\geq 0 \}$ and produces a random number of offspring,
$1+L$, after an exponential time $\eta$. We assume that $L$ has distribution $\{p_k, k\geq 0\}$ with $m:=\sum_{k\geq 0} kp_k<\infty$ and $\eta$ is exponentially distributed with parameter $\beta>0$. Starting from their points of creation, each of these children evolves independently.
	
McKean \cite{Mc} established the connection between BBM and
the Fisher-Kolmogorov-Petrovskii-Piskounov (F-KPP)
reaction-diffusion equation
\begin{equation}\label{KPPeq1}
\frac{\partial \mathbf u}{\partial t} = \frac{1}{2} \frac{\partial^2 \mathbf u}{\partial x^2} + \beta({\mathbf f}({\mathbf u})-{\mathbf u}),
\end{equation}
where ${\mathbf f}(s)={\mathbf E}(s^{L+1})$ and ${\mathbf u}:\R^+ \times \R \rightarrow [0,1]$.
The F-KPP equation has been studied intensively by both analytic techniques (see, for example, Kolmogorov et al. \cite{KPP} and Fisher \cite{Fish}) and probabilistic methods
(see, for instance, McKean \cite{Mc}, Bramson \cite{Bramson78,Bramson83}, Harris \cite{Harris99} and Kyprianou \cite{Ky}).
	
Particular attention has been paid to solutions of the form
${\mathbf u}(t,x) = \Phi_c(x-ct).$
Substituting this into \eqref{KPPeq1} shows that $\Phi_c$ satisfies
\begin{equation}\label{travel}
\frac{1}{2} \Phi_c'' + c\Phi_c' +
\beta({\mathbf f}(\Phi_c)-\Phi_c) = 0,
\end{equation}
and such a solution $\Phi_c$ is known as a travelling wave solution of speed $c$.
Kyprianou \cite{Ky}, using
the additive and derivative martingales of BBM,
gave a probabilistic representation of travelling wave solutions and also gave probabilistic proofs, different from Harris \cite{Harris99}, for the existence, asymptotics and uniqueness of travelling wave solutions. Inspired by \cite{Ky}, we consider similar problems for BBMs in a periodic environment.
	
BBM in a periodic environment  (BBMPE) is constructed in the same way as BBM,
except that the constant branching rate is replaced by
a space-dependent rate function $\mathbf g$, where we assume $\mathbf g\in C^1(\R)$ is strictly positive and 1-periodic.
More precisely, initially there is a single particle $v$ at
$x\in \R$, performing standard Brownian motion, and
let $b_v$ and $d_v$ be the birth time and death time of the particle $v$, respectively, and $X_v(s)$ be the location of the particle $v$ at time $s$, then
\begin{equation}\label{KPP-P}
\P_x
\left(d_v-b_v>t \;|\; b_v, \{X_v(s): s\geq b_v \} \right) =
\exp\left\{ -\int_{b_v}^{b_v+t} \mathbf g(X_v(s)) \mathrm{d}s \right\}.
\end{equation}
Here, we fictitiously extend $X_v(s)$ beyond its death time $d_v$.
We still assume that $1+L$
is the number of offspring produced by one individual and $L$ has distribution $\{p_k: k\geq 0 \}$ with $m=\sum_{k\geq 0} kp_k<\infty$.
We always assume that $m>0$.
The F-KPP equation related to BBMPE has the following form:
\begin{equation} \label{KPPeq2}
\frac{\partial \mathbf u}{\partial t} = \frac{1}{2} \frac{\partial^2 \mathbf u}{\partial x^2} +
\mathbf g\cdot(\mathbf f(\mathbf u)-\mathbf u).
\end{equation}
In this case, travelling wave solutions, that is solutions satisfying \eqref{travel}, do not exist. However, we can consider so-called pulsating travelling waves, that is, solutions $\mathbf u:\R^+\times\R\rightarrow [0,1]$ to \eqref{KPPeq2} satisfying
\begin{equation}\label{pul_travel}
\mathbf u(t+\frac{1}{\nu}, x) = \mathbf u(t,x-1),
\end{equation}
as well as the boundary condition
\begin{equation}
\lim_{x\rightarrow-\infty} \mathbf u(t,x) = 0,\quad \lim_{x\rightarrow+\infty} \mathbf u(t,x) =1,
\end{equation}
when $\nu>0$, and
\begin{equation}
\lim_{x\rightarrow-\infty} \mathbf u(t,x) = 1,\quad \lim_{x\rightarrow+\infty} \mathbf u(t,x) =0,
\end{equation}
when $\nu<0$. $\nu$ is called the wave speed.
It is known that there is a constant $\nu^*>0$ such that
when $|\nu|<\nu^*$ no such solution exists, whereas for each $|\nu|\geq \nu^*$
there exists a unique, up to time-shift, pulsating travelling wave (see Hamel et al. \cite{HNRR}).

Recently, Lubetzky, Thornett and Zeitouni \cite{LTZ} established the connection
between F-KPP equation \eqref{KPPeq2} with BBMPE in the case of binary branching
and studied the the maximum of BBMPE.
In this paper we first study limits of additive and derivative martingales of BBMPE.
Then we use these limits to give
probabilistic representations of pulsating travelling wave solutions of \eqref{KPPeq2} with speed
$\nu$ satisfying $|\nu|\geq\nu^*$, where $\nu^*$ is a constant defined below.
In the rest of this paper,
$|\nu|>\nu^*$ is called the supercritical case, and $|\nu|=\nu^*$  the critical case.
We also prove that there is no  pulsating travelling wave solution of \eqref{KPPeq2} with speed satisfying $|\nu|<\nu^*$ (called  the subcritical case).
The asymptotic behavior and  uniqueness of the pulsating travelling wave solution of \eqref{KPPeq2} will be studied in
the companion paper \cite{RSYb}.
Therefore, we extend the results of Kyprianou \cite{Ky} for classical BBM to  BBMPE.
It turns out that most of the general deas in Kyprianou \cite{Ky} still work for BBMPE. However, as we will see, carrying out the actual argument is much more difficult.

Before we state our main results, we first introduce the minimal speed $\nu^*$.
As in \cite{HNRR}, for every $\lambda\in\R$, let $\gamma(\lambda)$ and $\psi(\cdot,\lambda)$ be the principal eigenvalue and the corresponding positive eigenfunction of the periodic problem: for all $x\in \R$,
	\begin{equation}\label{eigen}
	\begin{split}
	\frac{1}{2} \psi_{xx}(x, \lambda) - \lambda \psi_x(x, \lambda) + (\frac{1}{2}\lambda^2
	+m\mathbf g(x))\psi(x, \lambda) &= \gamma(\lambda)\psi(x, \lambda),\\
	\psi(x+1,\lambda) &= \psi(x,\lambda).
	\end{split}
	\end{equation}
We normalize $\psi(\cdot,\lambda)$ such that $\int_0^1 \psi(x,\lambda) dx = 1$. Define
	\begin{equation}\label{crit_v}
	\nu^*:=\min_{\lambda>0} \frac{\gamma(\lambda)}{\lambda},
	\quad \lambda^*:= \underset{\lambda>0}{\arg\min} \frac{\gamma(\lambda)}{\lambda}.
	\end{equation}
$\nu^*$ is  the minimal wave speed (see \cite{HNRR}) and the existence of $\lambda^*$ is proved in \cite{LTZ}.
	
Let $N_t$ be the set of particles alive at time $t$ and $X_u(s)$ be the position of the particle $u$ or its ancestor at time $s$ for any $u\in N_t$, $s\leq t$. Define
$$Z_t = \sum_{u\in N_t} \delta_{X_u(t)},$$
and $\F_t = \sigma(Z_s: s\leq t)$.
$\{Z_t: t\ge 0\}$ is called a branching Brownain motion in a periodic environment
(BBMPE).
Let $\P_x$ be the law of $\{Z_t: t\ge 0\}$ when the initial particle starts at $x\in\R$, that is $\P_x(Z_0=\delta_x) = 1$ and $\E_x$ be expectation with respect to $\P_x$.
For simplicity, $\P_0$ and $\E_0$ will be written as $\P$ and $\E$, respectively. Notice that the distribution of $L$ does not depend on the spatial location.
In the remainder of this paper, expectations with respect to $L$ will be written as
$\mathbf E$.
\subsection{Main results}
	For any $\lambda\in\R$, define
	\begin{equation}\label{mart_add}
	W_t(\lambda) = e^{-\gamma(\lambda)t} \sum_{u\in N_t} e^{-\lambda X_u(t)} \psi(X_u(t),\lambda).
	\end{equation}

\begin{thrm}\label{thrm1}
For any $\lambda\in\R$ and $x\in\R$, $\{(W_t(\lambda))_{t\geq 0}, \P_x\}$ is a martingale.
The limit $W(\lambda,x) := \lim_{t\uparrow\infty} W_t(\lambda)$ exists $\P_x$-almost surely.

$\mathrm{(i)}$ If $|\lambda| > \lambda^*$ then $W(\lambda,x) = 0$ $\P_x$-almost surely.

$\mathrm{(ii)}$ If $|\lambda| = \lambda^*$ then $W(\lambda,x) = 0$ $\P_x$-almost surely.

$\mathrm{(iii)}$ If $|\lambda| < \lambda^*$ then $W(\lambda,x) = 0$ $\P_x$-almost surely when $\mathbf E(L\log^+L) = \infty$, and $W(\lambda,x)$ is an $L^1(\P_x)$-limit when $\mathbf E(L\log^+L) < \infty$.
\end{thrm}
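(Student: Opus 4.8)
The plan is to run the spine (change-of-measure) method of Lyons and of Kyprianou \cite{Ky}, adapting each step to the periodic setting; the genuinely new part is an ergodic-theoretic control of the spatial motion of the spine, and that is where I expect the main difficulty. To begin, rewriting \eqref{eigen} shows that $h_\lambda(x):=e^{-\lambda x}\psi(x,\lambda)$ solves $\tfrac12 h_\lambda''+m\mathbf g\,h_\lambda=\gamma(\lambda)h_\lambda$, so $h_\lambda$ is a positive ``generalized ground state'' of $\tfrac12\partial_{xx}+m\mathbf g-\gamma(\lambda)$. Combining this with the many-to-one lemma (the first moment of $\sum_{u\in N_t}f(X_u(t))$ under $\P_x$ equals the expectation of $e^{m\int_0^t\mathbf g(B_s)\,ds}f(B_t)$ for a Brownian motion $B$ started at $x$) and a Feynman--Kac/localization argument gives $\E_x[W_t(\lambda)]=h_\lambda(x)$ for every $t$; the martingale property then follows from the branching Markov property, and, being nonnegative, $W_t(\lambda)$ converges $\P_x$-a.s.

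Next I would define the tilted measure $\widetilde\P_x$ on $\F_t$ by $d\widetilde\P_x/d\P_x|_{\F_t}=W_t(\lambda)/h_\lambda(x)$. Under $\widetilde\P_x$ the process admits the usual spine decomposition: a distinguished ray $\xi=(\xi_t)_{t\ge0}$ moving as the $h_\lambda$-transform of Brownian motion, $d\xi_t=dB_t+\big(\psi_x(\xi_t,\lambda)/\psi(\xi_t,\lambda)-\lambda\big)\,dt$, which branches at the accelerated rate $(1+m)\mathbf g(\xi_t)$ into a size-biased number of children (one carrying on the spine, the others each starting an independent copy of the $\P$-BBMPE); with $\mathcal G_\infty$ the $\sigma$-algebra generated by the spine, its branch times $(\sigma_i)_{i\ge1}$ and its offspring numbers $(\widehat L_i)_{i\ge1}$, the spine decomposition reads
\[
\widetilde\E_x\big[W_t(\lambda)\mid\mathcal G_\infty\big]=e^{-\gamma(\lambda)t}h_\lambda(\xi_t)+\sum_{i:\,\sigma_i\le t}\widehat L_i\,e^{-\gamma(\lambda)\sigma_i}h_\lambda(\xi_{\sigma_i}).
\]
Since $1/W_t(\lambda)$ is a nonnegative $\widetilde\P_x$-supermartingale, $W_t(\lambda)$ converges $\widetilde\P_x$-a.s.\ in $(0,\infty]$, and the standard dichotomy applies: $W(\lambda,x)$ is an $L^1(\P_x)$-limit $\iff$ $\{W_t(\lambda)\}$ is $\P_x$-uniformly integrable $\iff$ $W(\lambda,x)<\infty$ $\widetilde\P_x$-a.s., whereas $W(\lambda,x)=0$ $\P_x$-a.s.\ $\iff$ $W(\lambda,x)=\infty$ $\widetilde\P_x$-a.s. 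So it all reduces to the behaviour under $\widetilde\P_x$ of the two terms above.

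The hard part is the asymptotics of the spine. As $\psi_x(\cdot,\lambda)/\psi(\cdot,\lambda)$ is continuous, $1$-periodic and bounded, $\xi_t\bmod1$ is a positive recurrent diffusion on the circle with a unique invariant density, so the ergodic theorem should give $\xi_t/t\to-\gamma'(\lambda)$ $\widetilde\P_x$-a.s.\ (the velocity $-\gamma'(\lambda)$ being identified by differentiating \eqref{eigen} in $\lambda$, or from $\widetilde\E_x[e^{-\mu\xi_t}]\asymp e^{(\gamma(\lambda+\mu)-\gamma(\lambda))t}$, a consequence of the many-to-one lemma), and $\xi_t+\gamma'(\lambda)t$ should be genuinely diffusive --- obeying a central limit theorem and a law of the iterated logarithm with positive variance --- hence neighbourhood recurrent, with $\liminf_t(\xi_t+\gamma'(\lambda)t)=-\infty$ and $\limsup_t(\xi_t+\gamma'(\lambda)t)=+\infty$ $\widetilde\P_x$-a.s. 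Establishing this rigorously --- using that $\psi(\cdot,\lambda)$ and $\psi_x(\cdot,\lambda)$ are bounded above and below on the circle, and invoking ergodic theory, the CLT and the LIL for one-dimensional diffusions with periodic coefficients --- replaces the trivial ``Brownian motion with constant drift'' available in the classical case, and is the technical heart of the proof. I would also use, from \cite{HNRR,LTZ}, that $\gamma$ is smooth and strictly convex, so that $\kappa(\lambda):=\lambda\gamma'(\lambda)-\gamma(\lambda)$ has derivative $\lambda\gamma''(\lambda)>0$ for $\lambda>0$; together with $\gamma'(\lambda^*)=\gamma(\lambda^*)/\lambda^*$ this yields $\kappa(\lambda^*)=0$, hence $\kappa(\lambda)<0$ for $0<\lambda<\lambda^*$ and $\kappa(\lambda)>0$ for $\lambda>\lambda^*$.

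It remains to combine these. Put $a_i:=e^{-\gamma(\lambda)\sigma_i}h_\lambda(\xi_{\sigma_i})$; then $e^{-\gamma(\lambda)t}h_\lambda(\xi_t)=\psi(\xi_t,\lambda)\,e^{\kappa(\lambda)t}\,e^{-\lambda(\xi_t+\gamma'(\lambda)t)}$ (and likewise for $a_i$ with $\sigma_i$ in place of $t$). If $|\lambda|>\lambda^*$ then $\kappa(\lambda)>0$ while $\xi_t+\gamma'(\lambda)t=o(t)$, so $e^{-\gamma(\lambda)t}h_\lambda(\xi_t)\to\infty$; since the spine particle lies in $N_t$ we get $W_t(\lambda)\ge e^{-\gamma(\lambda)t}h_\lambda(\xi_t)\to\infty$ $\widetilde\P_x$-a.s., so $W(\lambda,x)=\infty$ $\widetilde\P_x$-a.s.\ and $W(\lambda,x)=0$ $\P_x$-a.s. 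If $|\lambda|=\lambda^*$ then $\kappa(\lambda^*)=0$, so $e^{-\gamma(\lambda^*)t}h_{\lambda^*}(\xi_t)=\psi(\xi_t,\lambda^*)\,e^{-\lambda^*(\xi_t+\gamma'(\lambda^*)t)}$, whose $\limsup$ in $t$ is $\infty$ by the recurrence above; hence $\limsup_t W_t(\lambda^*)=\infty$, and since $W_t(\lambda^*)$ converges $\widetilde\P_x$-a.s.\ its limit must be $\infty$, so $W(\lambda^*,x)=0$ $\P_x$-a.s. If $|\lambda|<\lambda^*$ then $\kappa(\lambda)<0$, so $e^{-\gamma(\lambda)t}h_\lambda(\xi_t)\to0$ and $\sum_i a_i<\infty$ (the $a_i$ decay geometrically, $\sigma_i/i$ having a positive limit); therefore $\liminf_t\widetilde\E_x[W_t(\lambda)\mid\mathcal G_\infty]<\infty$ $\widetilde\P_x$-a.s.\ precisely when $S:=\sum_{i\ge1}\widehat L_i a_i<\infty$. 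Conditionally on the spine and its branch times the $\widehat L_i$ are independent size-biased copies of $L$ and the $a_i$ are frozen, so a Kolmogorov three-series / conditional Borel--Cantelli argument (as in the proof of Biggins' theorem) gives $S<\infty$ a.s.\ $\iff$ $\sum_i\P(\widehat L_i>1/a_i)<\infty$, which --- since the $a_i$ decay geometrically and $\widehat L$ is the size-biased version of $L$ --- holds $\iff$ $\mathbf E(L\log^+L)<\infty$. Thus if $\mathbf E(L\log^+L)<\infty$ then $S<\infty$ $\widetilde\P_x$-a.s., so by conditional Fatou $W(\lambda,x)<\infty$ $\widetilde\P_x$-a.s., i.e.\ $W_t(\lambda)\to W(\lambda,x)$ in $L^1(\P_x)$; while if $\mathbf E(L\log^+L)=\infty$ then $S=\infty$ $\widetilde\P_x$-a.s.\ (indeed $\limsup_i\widehat L_i a_i=\infty$), and a routine local-survival argument upgrades this to $\limsup_t W_t(\lambda)=\infty$, whence $W(\lambda,x)=\infty$ $\widetilde\P_x$-a.s.\ and $W(\lambda,x)=0$ $\P_x$-a.s. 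The case $\lambda<0$ is entirely analogous, working at the other spatial end.
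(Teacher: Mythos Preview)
Your overall architecture --- change of measure by $W_t(\lambda)$, spine decomposition, the dichotomy $W=0$ $\P_x$-a.s.\ $\Leftrightarrow$ $W=\infty$ $\widetilde\P_x$-a.s., and then reading off each case from the growth of $e^{-\gamma(\lambda)t}h_\lambda(\xi_t)$ --- matches the paper exactly, as does your use of $\kappa(\lambda)=\lambda\gamma'(\lambda)-\gamma(\lambda)$ (which is the content of the paper's Lemma~\ref{lemma_com}).

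Where you diverge is in the two technical lemmas on the spine motion. For the strong law $\xi_t/t\to-\gamma'(\lambda)$ you propose ergodic theory for the circle diffusion $\xi_t\bmod1$; the paper instead computes the limiting cumulant generating function $\lim_t t^{-1}\log\Pi_x^\lambda[e^{\eta Y_t}]=\gamma(\lambda-\eta)-\gamma(\lambda)$ via \eqref{psi_feynman_Kac} and appeals to G\"artner--Ellis plus Borel--Cantelli (Lemmas~\ref{lemma_ldp}--\ref{lemma_slln}). For the critical recurrence $\liminf_t(\xi_t+\gamma'(\lambda^*)t)=-\infty$ you invoke a periodic-coefficient CLT/LIL; the paper avoids this by taking the hitting times $T_k=\inf\{t:\xi_t\le x-k\}$, noting that periodicity makes $(T_k-T_{k-1})$ i.i.d.\ with mean $1/\gamma'(\lambda^*)$, and applying Chung--Fuchs recurrence to the mean-zero random walk $T_k-k/\gamma'(\lambda^*)$. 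Both of your substitutes are valid, though your route needs the CLT variance to be strictly positive, which you should justify (it holds because the drift $\psi_x/\psi-\lambda$ is nonconstant unless $\mathbf g$ is constant). The hitting-time device is arguably cleaner here since it sidesteps any functional CLT machinery. Finally, in case~(iii) with $\mathbf E(L\log^+L)=\infty$ you can drop the ``local-survival'' step: the paper simply bounds $W_{d_{\xi_k}}(\lambda)\ge CA_{\xi_k}e^{-\lambda(\xi_{d_{\xi_k}}+\gamma(\lambda)d_{\xi_k}/\lambda)}$ at the fission times, which already has $\limsup=\infty$ by $\limsup_k k^{-1}\log A_{\xi_k}=\infty$.
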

	
$\{(W_t(\lambda))_{t\geq 0}, \P_x\}$
is called the additive martingale of the BBMPE starting from $x$.
	
\begin{remark}
For BBM, there is no essential difference between the case when the initial ancestor starting from $x$ and the case starting from the origin, whereas things are different for BBMPE because the branching rate depends on the position. This is why we write $W(\lambda,x)$ as the almost sure limit of $W_t(\lambda)$, instead of $W(\lambda)$. Nevertheless, it can be proved that $(W(\lambda,y), \P_y) \overset{d}{=}( e^{-\lambda(y-x)} W(\lambda,x), \P_x)$, if $y-x\in\mathbb{Z}$, due to the 1-periodicity of $\mathbf g$.
\end{remark}
	
We will show that both  $\gamma(\lambda)$ and $\psi(x,\lambda)$ are differentiable with respect to $\lambda$, so we can define
\begin{equation}\label{mart_deriv}
\partial W_t(\lambda) := -\frac{\partial}{\partial \lambda} W_t(\lambda) = e^{-\gamma(\lambda)t} \sum_{u\in N_t} e^{-\lambda X_u(t)} \bigg{(} \psi(X_u(t),\lambda) (\gamma'(\lambda)t+X_u(t)) - \psi_{\lambda}(X_u(t),\lambda)  \bigg{)}.
\end{equation}

\begin{thrm}\label{thrm2}
For any $\lambda\in\R$ and $x\in\R$,
$\{(\partial W_t(\lambda))_{t\geq 0}, \P_x\}$ is a martingale.
For all $|\lambda|\geq \lambda^*$, the limit $\partial W(\lambda,x) := \lim_{t\uparrow\infty} \partial W_t(\lambda)$ exists $\P_x$-almost surely.

$\mathrm{(i)}$ If $|\lambda| > \lambda^*$ then $\partial W(\lambda,x) = 0$ $\P_x$-almost surely.

$\mathrm{(ii)}$ If $|\lambda| = \lambda^*$ then $\partial W(\lambda,x) = 0$ $\P_x$-almost surely when
$\mathbf E(L(\log^+L)^{2}) = \infty$, and $\partial W(\lambda,x)\in (0,\infty)$
(respectively $\partial W(\lambda,x)\in (-\infty,0)$) $\P_x$-almost surely when $\lambda>0$  (respectively $\lambda<0$) and $\mathbf E(L(\log^+L)^{2}) < \infty$.
\end{thrm}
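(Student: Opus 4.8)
We follow the spine strategy of Kyprianou \cite{Ky}. First, reflecting space $x\mapsto -x$ turns the BBMPE in environment $\mathbf g$ at parameter $\lambda$ into a BBMPE in environment $\mathbf g(-\cdot)$ at parameter $-\lambda$, while the principal eigenvalue in \eqref{eigen} is unchanged (the operator at $-\lambda$ is the formal adjoint of the operator at $\lambda$, hence has the same principal eigenvalue). Consequently $\partial W_t(\lambda)$ for $\lambda\le-\lambda^*$ equals, up to a global sign, $\partial W_t(-\lambda)$ for the reflected environment, and since all the arguments below work for an arbitrary periodic environment we may assume $\lambda\ge\lambda^*>0$, which also explains the sign in part (ii). For the martingale property, recall that $\lambda\mapsto\gamma(\lambda),\psi(\cdot,\lambda)$ are $C^1$ with $\psi,\psi_x,\psi_\lambda,\psi_{\lambda x}$ locally bounded in $\lambda$. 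The function $v(t,x):=e^{-\gamma(\lambda)t-\lambda x}\big(\psi(x,\lambda)(\gamma'(\lambda)t+x)-\psi_\lambda(x,\lambda)\big)$ solves the linear equation $\partial_t v+\tfrac12\partial_{xx}v+m\mathbf g\,v=0$, being $-\partial_\lambda$ of $e^{-\gamma(\lambda)t-\lambda x}\psi(x,\lambda)$, which solves that $\lambda$-free equation by \eqref{eigen}. A routine many-to-one bound gives $\E_x\big[\sum_{u\in N_t}|v(t,X_u(t))|\big]<\infty$, and the branching Markov property then shows $\partial W_t(\lambda)=\sum_{u\in N_t}v(t,X_u(t))$ is a $\P_x$-martingale (equivalently, differentiate the martingale identity for $W_t(\lambda)$ under $\E_x[\,\cdot\mid\F_s]$, using the same bounds).

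Next, $\P_x$-a.s.\ convergence for $\lambda\ge\lambda^*$, and part (i). Write $h:=\psi_\lambda(\cdot,\lambda)/\psi(\cdot,\lambda)$, $\|h\|_\infty\le C_0$, and note $\gamma(\lambda)=\lambda\cdot\gamma(\lambda)/\lambda$ and $\gamma'(\lambda^*)=\nu^*$. For $y>0$ let $\Lambda^y_t$ consist of those $u\in N_t$ whose ancestral path keeps $\gamma'(\lambda)s+X_u(s)-h(X_u(s))+y+C_0\ge0$ for all $s\le t$, and set
\[
M^y_t:=\sum_{u\in\Lambda^y_t}e^{-\gamma(\lambda)t-\lambda X_u(t)}\psi(X_u(t),\lambda)\,\big(\gamma'(\lambda)t+X_u(t)-h(X_u(t))+y+C_0\big).
\]
Without the constraint $\Lambda^y_t$ this is $\partial W_t(\lambda)+(y+C_0)W_t(\lambda)$, a sum of two martingales; a particle leaves the sum exactly when its summand hits $0$, so $M^y_t$ is a non-negative $\P_x$-martingale, hence $M^y_t\to M^y_\infty\in[0,\infty)$ a.s. On the event $\Gamma^y_\infty$ that no particle is ever removed, $M^y_t=\partial W_t(\lambda)+(y+C_0)W_t(\lambda)$, and since $\lambda\ge\lambda^*$ Theorem~\ref{thrm1}(i)--(ii) gives $W_t(\lambda)\to0$, so $\partial W_t(\lambda)\to M^y_\infty$ on $\Gamma^y_\infty$. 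Using $\gamma'(\lambda)\ge\nu^*$ together with a law of large numbers for the leftmost particle, $\min_{u\in N_s}X_u(s)\ge-(\nu^*+o(1))s$ (which also follows from the maximum estimates of \cite{LTZ}), the quantity $\inf_{s\ge0,u\in N_s}(\gamma'(\lambda)s+X_u(s))$ is a.s.\ finite, so $\P_x(\Gamma^y_\infty)\uparrow1$ as $y\uparrow\infty$; hence $\partial W_t(\lambda)\to\partial W(\lambda,x)$ $\P_x$-a.s., with $\partial W(\lambda,x)=M^y_\infty\ge0$ on $\Gamma^y_\infty$. For part (i) ($\lambda>\lambda^*$): pick $\lambda^*<\lambda''<\lambda$; by strict convexity $\gamma'(\theta)>\nu^*$ on $[\lambda'',\lambda]$, so on the a.s.\ event that $\min_uX_u(t)\ge-(\nu^*+\varepsilon)t$ eventually (with $\varepsilon$ small) one gets $t\,e^{-\gamma(\lambda)t-\lambda X_u(t)}\le C\,e^{-\gamma(\lambda'')t-\lambda''X_u(t)}$ for all $u\in N_t$ and large $t$, whence $t\,W_t(\lambda)\le C'W_t(\lambda'')\to0$ by Theorem~\ref{thrm1}(i); since also $\limsup_t\max_uX_u(t)/t<\infty$ and $\gamma'(\lambda)>\nu^*$, eventually every summand of $\partial W_t(\lambda)$ is non-negative and bounded by $C''t\,e^{-\gamma(\lambda)t-\lambda X_u(t)}\psi(X_u(t),\lambda)$, so $0\le\partial W(\lambda,x)\le C''\limsup_t t\,W_t(\lambda)=0$.

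For part (ii) ($\lambda=\lambda^*$), change measure by $M^y_t/M^y_0$ to obtain $\widehat{\mathbb Q}^y_x$. Under $\widehat{\mathbb Q}^y_x$ the process has a spine $\xi$ moving as the $h$-transform of Brownian motion by $x\mapsto e^{-\lambda^*x}\psi(x,\lambda^*)$ conditioned to keep its path in $\Lambda^y$, branching at the size-biased rate and throwing off independent copies of the $\P$-BBMPE. Because $\lambda^*$ minimizes $\gamma(\lambda)/\lambda$, the centred spine $\xi_t+\nu^*t$ has vanishing mean drift and, after the $\Lambda^y$-conditioning, behaves like a Bessel-$3$-type process, so $\xi_t+\nu^*t\to+\infty$ with $\xi_t+\nu^*t\asymp\sqrt t$. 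The spine decomposition writes $M^y_t$, conditionally on the spine and its offspring data, as a vanishing spine term plus $\sum_i N_i\,e^{-\lambda^*(\xi_{T_i}+\nu^*T_i)}\psi(\xi_{T_i},\lambda^*)\big(\xi_{T_i}+\nu^*T_i+O(1)\big)$ over the branch times $T_i$ along the spine (which accumulate at asymptotically linear rate), where the $N_i$ are i.i.d.\ size-biased family sizes. Since $\xi_{T_i}+\nu^*T_i\asymp\sqrt{T_i}\asymp\sqrt i$, a three-series/Borel--Cantelli argument shows this series is $\widehat{\mathbb Q}^y_x$-a.s.\ finite iff $\sum_i\P(N_1>e^{c\sqrt i})<\infty$, and $\sum_i\P(N_1>e^{c\sqrt i})=\E\big[\#\{i:i<c^{-2}(\log^+N_1)^2\}\big]\asymp\E[(\log^+N_1)^2]$, which is finite iff $\mathbf E\big(L(\log^+L)^2\big)<\infty$. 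When the moment condition holds, $M^y_t$ is uniformly integrable under $\P_x$, so $\E_x[M^y_\infty]=M^y_0>0$; choosing $y$ so large that $\P_x(\Gamma^y_\infty)$ is close to $1$ gives $\P_x(\partial W(\lambda^*,x)>0)>0$, and a $0$--$1$ argument — $\partial W(\lambda^*,x)\ge0$ decomposes over the particles alive at a fixed time into conditionally independent non-negative copies, together with the quasi-periodicity $(\partial W(\lambda^*,y),\P_y)\overset{d}{=}(e^{-\lambda^*(y-x)}\partial W(\lambda^*,x),\P_x)$ for $y-x\in\mathbb Z$ and continuity in the starting point — upgrades this to $\partial W(\lambda^*,x)\in(0,\infty)$ $\P_x$-a.s. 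When $\mathbf E\big(L(\log^+L)^2\big)=\infty$, the divergence of the same series yields, $\widehat{\mathbb Q}^y_x$-a.s., infinitely many branch points with $N_i\gg e^{\lambda^*(\xi_{T_i}+\nu^*T_i)}$, each forcing $M^y_t$ above any prescribed level at some later time; hence $\limsup_tM^y_t=\infty$ $\widehat{\mathbb Q}^y_x$-a.s., so $\widehat{\mathbb Q}^y_x\perp\P_x$ on $\F_\infty$, i.e.\ $M^y_\infty=0$ $\P_x$-a.s., and therefore $\partial W(\lambda^*,x)=0$ $\P_x$-a.s.

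The main obstacle is the critical case: one must pin down the pathwise behaviour of the centred spine under $\widehat{\mathbb Q}^y_x$ — in particular that $\big(\gamma(\lambda)/\lambda\big)'|_{\lambda^*}=0$ makes it diffusive, growing like $\sqrt t$ after the Bessel-$3$ conditioning, rather than ballistic — and then run the Borel--Cantelli bookkeeping with the correct power of the logarithm. It is exactly the mismatch "branch times grow like $t$, centred spine grows like $\sqrt t$" that promotes the additive-martingale condition $\mathbf E(L\log^+L)<\infty$ to $\mathbf E\big(L(\log^+L)^2\big)<\infty$ here. Making these spine estimates (and the Bessel-$3$ comparison) uniform in the periodic environment, using only the $C^1$ regularity of $\lambda\mapsto(\gamma(\lambda),\psi(\cdot,\lambda))$ and the resulting bounds on $\psi_\lambda,\psi_{\lambda x}$, is where the bulk of the work lies.
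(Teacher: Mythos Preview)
Your overall architecture matches the paper's exactly: introduce the truncated non-negative martingale (the paper's $V_t^x(\lambda)$, your $M^y_t$), prove $\partial W_t(\lambda)$ converges on the event that no particle is ever truncated, let the truncation parameter go to infinity, then change measure by $V_t^x(\lambda)$ and analyse the spine via a Bessel-3 identification. Your part~(i), however, is genuinely different and more elementary than the paper's: the paper runs the full $V^x(\lambda)$ machinery and the Bessel-3 lower bound even when $\lambda>\lambda^*$, whereas you bypass the measure change entirely by combining the leftmost/rightmost particle LLN with a comparison $t\,e^{-\gamma(\lambda)t-\lambda X_u(t)}\lesssim e^{-\gamma(\lambda'')t-\lambda''X_u(t)}$ for an intermediate $\lambda''\in(\lambda^*,\lambda)$, reducing to Theorem~\ref{thrm1}(i). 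This is a nice shortcut.

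For part~(ii) there are two places where your sketch is not yet a proof, and the paper's extra work is precisely what fills them. First, the Bessel-3 identification in a periodic environment is not for $\xi_t+\nu^*t$ itself: under $\Pi^{\lambda^*}_y$ the process $\xi_t+\nu^*t$ is \emph{not} a martingale (its drift is $\psi_x/\psi-\lambda^*+\nu^*\ne 0$ in general). The paper's key device is the corrected coordinate $h(x)=x-\psi_\lambda(x,\lambda^*)/\psi(x,\lambda^*)$ and the proof (Lemma~\ref{lemma_h_incre}) that $h'>0$ is bounded away from $0$ and $\infty$; then $M_t=\gamma'(\lambda^*)t+h(\xi_t)-h(\xi_0)$ is a martingale with $\langle M\rangle_t\in[c_1t,c_2t]$ (Lemma~\ref{lemma_mart_h}), and Dambis--Dubins--Schwarz plus the Imhof change of measure give the \emph{exact} Bessel-3 process after a time change. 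Your ``Bessel-3-type'' statement needs this; without it the $\sqrt t$ asymptotics are not established.

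Second, in the case $\mathbf E(L(\log^+L)^2)=\infty$ your Borel--Cantelli reduction ``$\xi_{T_i}+\nu^*T_i\asymp\sqrt i$, hence $\sum_i\P(N_1>e^{c\sqrt i})=\infty$'' is not quite valid: the Bessel-3 process has nontrivial fluctuations, so one cannot replace the random $R_{v_i}$ by a deterministic $c\sqrt i$ inside the indicator $\{N_i>e^{\lambda^*R_{v_i}}\}$. The paper instead conditions on the spine to get a Poisson random measure, reduces to showing $\int_0^\infty\sum_k\tilde p_k\mathbf 1_{\{k(\tilde x+R_t)e^{-\lambda^*R_t}\ge M\}}\,dt=\infty$ a.s., and proves this via the Bessel-3 local time identity $\int_0^\infty\mathbf 1_{\{|W_t|\le a\}}\,dt=\int_0^a l_\infty^r\,dr$ with $l_\infty^r\overset{d}{=}r\,l_\infty^1$ (the Appendix, Lemma~\ref{Prob-Ac}). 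That local-time step is the missing idea in your divergence direction. Finally, your $0$--$1$ argument invokes ``continuity in the starting point'' of $x\mapsto\P_x(\partial W(\lambda^*,x)=0)$, which you have not established; the paper instead uses a hitting-time comparison $\P_y(\tau_z^Z\le M)\ge\Pi_y(\tau_z^B\le M)\to 1$ to show this probability is constant in $y$, then $p=\E_y p^{|N_s|}$.
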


$\{(\partial W_t(\lambda))_{t\geq 0}, \P_x\}$
is called the derivative martingale of the BBMPE starting from $x$.

Pulsating travelling waves have been studied analytically in many papers,
see, for example, \cite{BH, H08, HR, HNRR}.
It is known that there is no pulsating travelling waves for $|\nu|<\nu^*$, whereas if $|\nu|\geq \nu^*$ then there exists a unique, up to time-shift, pulsating travelling wave (see \cite[P. 467]{HNRR}).
In this paper, we use limits of the additive and derivative martingales to give probabilistic representations of pulsating travelling wave in the supercritical case of  $|\nu|> \nu^*$ and the critical case of $|\nu|=\nu^*$.
Thus we give probabilistic proofs of the existence of  pulsating travelling waves in the supercritical and critical cases.
We also give a probabilistic proof of the non-existence of  pulsating travelling waves in the subcritical case $|\nu|<\nu^*$.
In \cite{RSYb}, we will give a probabilistic proof of the asymptotic behavior and uniqueness of the pulsating travelling waves.

\begin{thrm}\label{thrm3}
$\mathrm{(i)}$  \textbf{Supercriticality case.}
If $|\nu|>\nu^*$ and $\mathbf E(L\log^+L) < \infty$,
\begin{equation}
\mathbf u(t,x)
= \E_x \left(\exp\left\{ -e^{\gamma(\lambda)t} W(\lambda,x) \right\} \right)
\end{equation}
is a pulsating travelling wave with speed $\nu$,
where $|\lambda| \in (0,\lambda^*)$ is such that
$\nu = \frac{\gamma(\lambda)}{\lambda}$.

$\mathrm{(ii)}$ \textbf{Criticality case.}
If $|\nu|=\nu^*$ and $\mathbf E(L(\log^+L)^{2}) < \infty$,
\begin{equation}
\mathbf u(t,x) =
\E_x\left(\exp\left\{ -e^{\gamma(\lambda^*)t} \partial W(\lambda^*,x) \right\} \right)
\end{equation}
is a pulsating travelling wave with speed $\nu^*$, and
$$\mathbf u(t,x) = \E_x\left(\exp\left\{ -e^{\gamma(\lambda^*)t} \partial W(-\lambda^*,x) \right\} \right)$$
is a pulsating travelling wave with speed $-\nu^*$.

$\mathrm{(iii)}$ \textbf{Subcriticality case.}
There is no pulsating travelling wave when $|\nu|<\nu^*$.
\end{thrm}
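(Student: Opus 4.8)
The plan is to adapt Kyprianou's \cite{Ky} probabilistic treatment of classical BBM, taking Theorems \ref{thrm1} and \ref{thrm2} as given. Throughout I use that $\gamma$ is even (the operator in \eqref{eigen} and its formal $L^2$-adjoint are the operators for $\lambda$ and $-\lambda$, hence have the same principal eigenvalue), so $\gamma(-\lambda^*)=\gamma(\lambda^*)$ and both the minimal and the maximal displacements of the BBMPE grow at speed $\nu^*$. For \emph{parts (i) and (ii)} set $\mathbf u(t,x)=\E_x(\exp\{-e^{\gamma(\lambda)t}W(\lambda,x)\})$ in the supercritical case and $\mathbf u(t,x)=\E_x(\exp\{-e^{\gamma(\lambda^*)t}\partial W(\lambda^*,x)\})$ in the critical case; as $W(\lambda,x)\ge0$ and, by Theorem \ref{thrm2}(ii), $\partial W(\lambda^*,x)>0$ $\P_x$-a.s., $\mathbf u$ takes values in $(0,1]$. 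The first step is a branching decomposition of the martingale limit: grouping the particles of $N_r$ ($r>s$) by their ancestor in $N_s$ gives $W_r(\lambda)=e^{-\gamma(\lambda)s}\sum_{v\in N_s}W^{(v)}_{r-s}(\lambda)$ and $\partial W_r(\lambda)=e^{-\gamma(\lambda)s}\sum_{v\in N_s}\partial W^{(v)}_{r-s}(\lambda)+\gamma'(\lambda)s\,W_r(\lambda)$, where $W^{(v)}_\bullet,\partial W^{(v)}_\bullet$ are the additive and derivative martingales of the subtree rooted at $v$. Letting $r\uparrow\infty$, and in the critical case invoking $W(\lambda^*,\cdot)\equiv0$ (Theorem \ref{thrm1}(ii)) to annihilate the last term, we obtain $e^{\gamma(\lambda)t}W(\lambda,x)=e^{\gamma(\lambda)(t-s)}\sum_{v\in N_s}W^{(v)}_\infty$ and $e^{\gamma(\lambda^*)t}\partial W(\lambda^*,x)=e^{\gamma(\lambda^*)(t-s)}\sum_{v\in N_s}\partial W^{(v)}_\infty$, the summands being, conditionally on $\F_s$, independent of law $W(\lambda,X_v(s))$ (resp. $\partial W(\lambda^*,X_v(s))$) under $\P_{X_v(s)}$. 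Conditioning on $\F_s$ and integrating yields the semigroup identity $\mathbf u(t,x)=\E_x\big(\prod_{v\in N_s}\mathbf u(t-s,X_v(s))\big)$ for $0\le s\le t$; by McKean's correspondence between such multiplicative functionals and \eqref{KPPeq2} (as in \cite{Mc}, adapted to the space-periodic rate along the lines of \cite{LTZ}) together with interior parabolic regularity, $\mathbf u$ is a classical solution of \eqref{KPPeq2} on $(0,\infty)\times\R$ with $\sup_x|\partial_x\mathbf u(t,\cdot)|$ locally bounded in $t>0$.

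It remains to check \eqref{pul_travel} and the boundary conditions. Since $\gamma(\lambda)/\nu=\lambda$ (resp. $\gamma(\lambda^*)/\nu^*=\lambda^*$), and $(W(\lambda,x-1),\P_{x-1})\overset{d}{=}(e^{\lambda}W(\lambda,x),\P_x)$ by the Remark after Theorem \ref{thrm1} — the same $1$-periodicity computation, now discarding the extra term $k\,W_t(\lambda)$ produced by an integer shift $k$ because $W(\lambda^*,\cdot)\equiv0$, gives $(\partial W(\lambda^*,x-1),\P_{x-1})\overset{d}{=}(e^{\lambda^*}\partial W(\lambda^*,x),\P_x)$ — substituting into $\mathbf u$ gives \eqref{pul_travel} at once. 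For the boundary behaviour, write $x=x_0+n$ with $x_0\in[0,1)$, $n\in\mathbb{Z}$, so that $W(\lambda,x)\overset{d}{=}e^{-\lambda n}W(\lambda,x_0)$ under the appropriate law (likewise for $\partial W$). As $n\to+\infty$ the prefactor vanishes, so $\mathbf u(t,x_0+n)\to1$; as $n\to-\infty$ it diverges, and since $W(\lambda,x_0)>0$ $\P_{x_0}$-a.s. (there is no extinction, the offspring number being $1+L\ge1$, and the limit is non-degenerate, so standard arguments give strict positivity) — respectively $\partial W(\lambda^*,x_0)\in(0,\infty)$ $\P_{x_0}$-a.s. by Theorem \ref{thrm2}(ii) — we get $\mathbf u(t,x_0+n)\to0$; these limits hold uniformly in $x_0\in[0,1)$ (for (i), because $\E_{x_0}(1-e^{-cW(\lambda,x_0)})\le c\,\E_{x_0}W(\lambda,x_0)$ is uniformly small; for (ii), by a finite partition of $[0,1)$ together with the uniform gradient bound), hence $\lim_{x\to\pm\infty}\mathbf u(t,x)$ exist and match the boundary conditions dictated by the sign of $\nu$. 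The speed-$(-\nu^*)$ statement is the mirror image, using $\lambda=-\lambda^*$, $\gamma(-\lambda^*)=\gamma(\lambda^*)$, and Theorem \ref{thrm2}(ii) for $\lambda<0$.

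\emph{Part (iii).} Suppose, for contradiction, that $\mathbf u$ is a pulsating travelling wave of speed $\nu\in(0,\nu^*)$ (the case $\nu\in(-\nu^*,0)$ is symmetric). Iterating \eqref{pul_travel} extends $\mathbf u$ to a bounded solution of \eqref{KPPeq2} on $\R\times\R$ obeying the same boundary conditions at every time, since \eqref{KPPeq2} is invariant under $x\mapsto x+1$ and time translation; moreover $0<\mathbf u<1$ everywhere by the strong maximum principle ($\mathbf f(s)-s\ge-s$ and $s-\mathbf f(s)\ge0$ on $[0,1]$) together with the boundary conditions. By McKean's representation, $\hat M_s:=\prod_{u\in N_s}\mathbf u(-s,X_u(s))$, $s\ge0$, is a $\P_x$-martingale with values in $[0,1]$, so it converges $\P_x$-a.s. and in $L^1$ to $\hat M_\infty$ with $\E_x\hat M_\infty=\hat M_0=\mathbf u(0,x)\in(0,1)$. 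On the other hand, \eqref{pul_travel} forces the front position $m(t):=\inf\{x:\mathbf u(t,x)\ge\tfrac12\}$ to satisfy $m(t+\tfrac1\nu)=m(t)+1$, hence $m(t)=\nu t+O(1)$; and, using the parabolic estimates and the time-periodicity of $\mathbf u$ in the frame moving at speed $\nu$, $\mathbf u(t,y)\to0$ as $y-m(t)\to-\infty$, uniformly over $t$ in one period. Since a.s. $\min_{u\in N_s}X_u(s)/s\to-\nu^*<-\nu$ (the minimal-displacement asymptotics, which follow from \cite{LTZ} via the $x\mapsto-x$ symmetry), for all large $s$ the leftmost particle lies a distance $\asymp(\nu^*-\nu)s$ to the left of $m(-s)=-\nu s+O(1)$; therefore $\hat M_s\le\mathbf u(-s,\min_{u\in N_s}X_u(s))\to0$ $\P_x$-a.s., so $\hat M_\infty=0$, contradicting $\E_x\hat M_\infty>0$.

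The main obstacle I expect to be, for (i) and (ii), upgrading the semigroup identity to the statement that $\mathbf u$ genuinely solves the PDE \eqref{KPPeq2} — McKean's correspondence plus parabolic smoothing in the periodic environment — and then, in the critical case, turning the boundary limits along integer shifts into honest limits as $x\to\pm\infty$, where the uniform gradient estimate is indispensable because $\partial W(\lambda^*,\cdot)$ need not be integrable; and, for (iii), making rigorous the front localization $m(t)=\nu t+O(1)$, the uniform decay of $\mathbf u$ behind the front, and the a.s. extremal-displacement asymptotics — the competition between the front speed $\nu$ and the extremal speed $\nu^*$ being the heart of the matter.
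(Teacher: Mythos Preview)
For parts (i) and (ii) your argument is essentially the paper's: the same branching decomposition of $W(\lambda,\cdot)$ and $\partial W(\lambda^*,\cdot)$, the same semigroup identity $\mathbf u(t,x)=\E_x\prod_{u\in N_s}\mathbf u(t-s,X_u(s))$ leading to the McKean representation, the same use of the integer-shift identity $(W(\lambda,y),\P_y)\overset{d}{=}(e^{-\lambda(y-x)}W(\lambda,x),\P_x)$ for $y-x\in\mathbb Z$ to get \eqref{pul_travel}, and the same boundary-condition verification via $\P_x(W(\lambda,x)=0)=0$ (respectively $\partial W(\lambda^*,x)>0$ a.s.). The paper handles the uniformity in $x_0\in[0,1)$ of the limit $\mathbf u(t,x_0+n)\to1$ by Dini's theorem rather than by your $L^1$ bound; in the critical case both your write-up and the paper are terse on this point.

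For part (iii) your route works but is heavier than necessary. You build the product martingale $\hat M_s=\prod_{u\in N_s}\mathbf u(-s,X_u(s))$, define a front position $m(t)$, and argue uniform decay behind the front. The paper avoids all of this by first using periodicity of the environment together with $\mathbf u(t+\tfrac1\nu,x)=\mathbf u(t,x-1)$ to write, for $\nu t\in\mathbb N$,
\[
\mathbf u(0,x)=\mathbf u(t,x+\nu t)=\E_{x+\nu t}\prod_{u\in N_t}\mathbf u(0,X_u(t))=\E_x\prod_{u\in N_t}\mathbf u(0,X_u(t)+\nu t),
\]
and then simply bounding the product by $\mathbf u(0,m_t+\nu t)$, where $m_t=\min_{u\in N_t}X_u(t)$. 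Since $m_t/t\to-\nu^*$ $\P_x$-a.s. (proved in the paper as a direct consequence of Theorem \ref{thrm1} and the positivity of $W(\lambda,\cdot)$ for $\lambda<\lambda^*$) and $|\nu|<\nu^*$, one has $m_t+\nu t\to-\infty$, hence $\mathbf u(0,x)\le 0$ by dominated convergence, a contradiction. This sidesteps the need to extend $\mathbf u$ to negative times, to localise the front, or to prove any uniform-in-$t$ decay: everything is pulled back to the single time slice $t=0$, where the boundary condition $\lim_{y\to-\infty}\mathbf u(0,y)=0$ is part of the hypothesis.
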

	
Since $\gamma(\lambda)$ is an even function (see Lemma \ref{lemma_gamma} below),
in the remainder of the paper we will deal only with the case that $\lambda\geq 0$ (i.e., $\nu\geq 0$) unless otherwise stated. The case $\lambda<0$ (i.e., $\nu<0$) follows from symmetry and the results are the same.

\section{Preliminaries}

\subsection{Properties of principal eigenvalue and eigenfunction}

 We first discuss some properties of $\gamma(\lambda)$,

\begin{lemma}\label{lemma_gamma}
(1) The function $\gamma$ is
analytic and strictly convex on $\R$.
There exists a unique $\lambda^*>0$ such that
\begin{equation*}
\nu^*=\frac{\gamma(\lambda^*)}{\lambda^*} = \min_{\lambda>0} \frac{\gamma(\lambda)}{\lambda}>0.
\end{equation*}
Furthermore
\begin{equation}\label{gamma'}
\lim_{\lambda\rightarrow -\infty}\gamma'(\lambda) = -\infty, \quad \lim_{\lambda\rightarrow +\infty}\gamma'(\lambda) = +\infty.
\end{equation}

(2) The function $\gamma$ is even on $\R$.
\end{lemma}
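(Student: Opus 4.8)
The plan is to treat the eigenvalue problem \eqref{eigen} as a perturbation problem in the parameter $\lambda$ and use a combination of Krein–Rutman / Perron–Frobenius theory and analytic perturbation theory for operators. First I would rewrite \eqref{eigen} in a self-adjoint-friendly form: the operator
$L_\lambda \phi := \tfrac12 \phi'' - \lambda \phi' + (\tfrac12\lambda^2 + m\mathbf g)\phi$
on $1$-periodic functions can be conjugated by $e^{-\lambda x}$ (note $e^{-\lambda x}$ is not periodic, but the conjugation is still useful on the universal cover) to remove the first-order term, turning it into $\tfrac12 \phi'' + m\mathbf g\,\phi$ acting on the weighted space; more precisely, writing $\psi(x,\lambda)=e^{\lambda x}h(x,\lambda)$ (locally) shows $\gamma(\lambda)$ is the principal eigenvalue of a Schrödinger-type operator with a $\lambda$-dependent twisted boundary condition. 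The cleanest route is to note that $L_\lambda$ is a holomorphic family of type (A) in the sense of Kato: its coefficients depend polynomially (hence analytically) on $\lambda$, the domain is fixed ($C^2$ periodic functions, or $H^2$ on the torus), and the principal eigenvalue $\gamma(\lambda)$ is algebraically simple and isolated (by Krein–Rutman applied to the associated positive semigroup, since $\mathbf g>0$ and the semigroup is positivity-improving). Kato's perturbation theory then yields that $\gamma(\lambda)$ and a choice of eigenfunction $\psi(\cdot,\lambda)$ are analytic in $\lambda$ on all of $\R$ (indeed on a complex neighbourhood), and the normalization $\int_0^1\psi\,dx=1$ preserves analyticity since $\int_0^1\psi\,dx$ is analytic and nonzero.

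Next, for strict convexity I would use the variational characterization of the principal eigenvalue. Writing $\psi(x,\lambda) = e^{\lambda x}\varphi(x,\lambda)$ on $\R$ with $\varphi$ playing the role of a periodic object after the twist, or — more robustly — using the probabilistic interpretation $e^{\gamma(\lambda)t} = \mathbf E\text{-type}$ expression via the many-to-one lemma: indeed $e^{-\gamma(\lambda)t}\psi(x,\lambda)^{-1}\mathbf E_x\big[\sum_{u\in N_t}e^{-\lambda X_u(t)}\psi(X_u(t),\lambda)\big]$ relates to $W_t(\lambda)$ being a martingale, which is exactly Theorem \ref{thrm1}; from the many-to-one formula, $e^{\gamma(\lambda)t}=\psi(x,\lambda)^{-1}\mathbf E_x\big[e^{-\lambda(B_t-x)+\lambda^2 t/2 + m\int_0^t\mathbf g(B_s)ds}\psi(B_t,\lambda)\big]$ (up to the exact bookkeeping of the Girsanov/Feynman–Kac terms), and the right side is log-convex in $\lambda$ by Hölder's inequality, giving convexity of $t\mapsto$ the log, hence of $\gamma$; strictness follows because equality in Hölder would force $B_t-x$ to be a.s. constant, which is false. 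Alternatively, convexity can be extracted directly from the Rayleigh quotient for the symmetrized operator, where the $\lambda$-dependence enters through an explicit quadratic-plus-linear term in the potential, and strict convexity comes from the non-degeneracy of the quadratic part $\tfrac12\lambda^2$ together with the fact that $\psi_x\not\equiv 0$.

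For the asymptotics \eqref{gamma'}, I would bound $\gamma(\lambda)$ above and below by $\tfrac12\lambda^2 + m\min\mathbf g$ and $\tfrac12\lambda^2 + m\max\mathbf g$ respectively — these follow by plugging constants into the variational formula, or by comparing with the constant-rate case — so $\gamma(\lambda)/\lambda^2 \to \tfrac12$ and hence $\gamma'(\lambda)\to\pm\infty$ as $\lambda\to\pm\infty$, once convexity guarantees $\gamma'$ is monotone and the squeeze pins down its growth; a short convexity argument ($\gamma'(\lambda)\ge (\gamma(\lambda)-\gamma(0))/\lambda \to \infty$) makes this rigorous. Then the existence and uniqueness of $\lambda^*$ minimizing $\gamma(\lambda)/\lambda$ on $(0,\infty)$ is a standard consequence of strict convexity: $\frac{d}{d\lambda}\big(\gamma(\lambda)/\lambda\big)=0$ iff $\lambda\gamma'(\lambda)=\gamma(\lambda)$, the function $\lambda\gamma'(\lambda)-\gamma(\lambda)$ has derivative $\lambda\gamma''(\lambda)>0$, so it is strictly increasing, negative near $0^+$ (since $\gamma(0)>0$ as $\mathbf g>0$ forces $\gamma(0)=m\max$... actually $\gamma(0)>0$) and positive for large $\lambda$ by \eqref{gamma'}, hence has a unique zero $\lambda^*$, at which $\gamma(\lambda^*)/\lambda^*$ is the minimum, and it is $>0$ since $\gamma>0$ everywhere. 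Finally, evenness: replacing $\lambda$ by $-\lambda$ and $x$ by $-x$ in \eqref{eigen} transforms the equation for $(\gamma(\lambda),\psi(x,\lambda))$ into the equation for $(\gamma(\lambda),\psi(-x,-\lambda))$ — the first-order term flips sign twice and the potential $\tfrac12\lambda^2+m\mathbf g$ is unchanged except that $\mathbf g(x)$ becomes $\mathbf g(-x)$; here I must be slightly careful, since $\mathbf g$ need not be even, so the correct statement is that $\gamma$ for environment $\mathbf g(\cdot)$ at $-\lambda$ equals $\gamma$ for environment $\mathbf g(-\cdot)$ at $\lambda$. To get genuine evenness of $\gamma$ for a fixed $\mathbf g$ one instead uses the adjoint: the principal eigenvalue of $L_\lambda$ equals that of its formal adjoint $L_\lambda^*\phi = \tfrac12\phi''+\lambda\phi' + (\tfrac12\lambda^2+m\mathbf g)\phi$, and $L_\lambda^* = L_{-\lambda}$, so $\gamma(\lambda)=\gamma(-\lambda)$. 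The main obstacle I anticipate is the analyticity claim — making the holomorphic-family-of-type-(A) argument airtight (fixed domain, polynomial coefficient dependence, isolated simple eigenvalue via Krein–Rutman) requires care, whereas convexity, the asymptotics, and evenness are then comparatively routine.
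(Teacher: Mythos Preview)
Your proposal is correct, and largely parallel to the paper, though the emphasis differs. The paper outsources analyticity, convexity, and the existence/uniqueness of $\lambda^*$ to \cite{LTZ} and \cite{H08}, whereas you sketch a self-contained Kato/Krein--Rutman argument; your route is the standard one those references would invoke, and your argument for $\lambda^*$ via strict monotonicity of $\lambda\gamma'(\lambda)-\gamma(\lambda)$ is clean and more explicit than what the paper records. For the asymptotics \eqref{gamma'}, both arguments arrive at the same sandwich $\tfrac12\lambda^2 + m\min\mathbf g \le \gamma(\lambda) \le \tfrac12\lambda^2 + m\max\mathbf g$ (note your inequality is written backwards), the paper via Feynman--Kac and you via comparison/variational methods, and then both use convexity to pass to $\gamma'$. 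The one place the paper's argument is genuinely slicker is strict convexity: rather than Hölder (which as you wrote it needs tightening---the bookkeeping of which expectation is log-convex is not quite right as stated), the paper observes that a convex analytic function whose derivative is not globally constant cannot be affine on any interval, and $\gamma'\to\pm\infty$ rules out the constant case. For evenness, your adjoint observation $L_\lambda^* = L_{-\lambda}$ is exactly the content of the paper's proof; the paper simply writes out the integration by parts $\int_0^1 \bar\psi\, L_\lambda\psi = \int_0^1 \psi\, L_{-\lambda}\bar\psi$ explicitly rather than naming it.
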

\begin{proof}
The analyticity, convexity of $\gamma$, and the existence and uniqueness of $\lambda^*$ are contained in \cite[Lemma 2.5]{LTZ}.
For the analyticity and convexity of $\gamma$, one can also see \cite[Lemma 2.1]{H08}.
Now we show \eqref{gamma'}.
Define $\phi(x,\lambda) = e^{-\lambda x} \psi(x,\lambda)$, for $\lambda\in\R$, $x\in\R$. A direct calculation shows that $\phi$ satisfies
\begin{equation}
\frac{1}{2}\phi_{xx}(x,\lambda) + m\mathbf g(x)\phi(x,\lambda) = \gamma(\lambda) \phi(x,\lambda).
\end{equation}
By the Feynman-Kac formula,
$$
\phi(x,\lambda)= \Pi_x \left[\phi(B_t,\lambda) e^{-\gamma(\lambda)t + m\int_0^t\mathbf g(B_s)\mathrm{d}s}   \right],\quad x\in\R.
$$
Hence,
\begin{equation}\label{psi_feynman_Kac}
\psi(x,\lambda)= \Pi_x \left[\psi(B_t,\lambda) e^{-\gamma(\lambda)t-\lambda (B_t-x) + m\int_0^t\mathbf g(B_s)\mathrm{d}s}   \right],\quad x\in\R.
\end{equation}
Since $\mathbf g$ is 1-periodic and continuous, we can assume that $0<\alpha\leq \mathbf g(x)\leq \beta<\infty$ for all $x\in\R$.
Notice that   $\Pi_x[e^{-\lambda B_t}]=e^{-\lambda x+\frac{\lambda^2 t}{2}}$, \eqref{psi_feynman_Kac} implies that
\begin{equation}\label{gamma_range}
\gamma(\lambda) \in \bigg{[} \frac{\lambda^2}{2}+m\alpha,  \frac{\lambda^2}{2}+m\beta \bigg{]}.
\end{equation}
Combining this with the analyticity and convexity of $\gamma$, we get \eqref{gamma'}.

If  there exists $\lambda_1 < \lambda_2$ such that $\gamma'(\lambda_1) = \gamma'(\lambda_2)$. The convexity of $\gamma$ would imply that $\gamma'$ is constant on $[\lambda_1,\lambda_2]$, and then analyticity would imply that $\gamma'$ is a constant, which contradicts \eqref{gamma'}. Therefore, $\gamma'(\cdot)$ is strictly increasing on $\R$, that is, $\gamma(\cdot)$ is strictly convex.

(2) Let $\psi(x): = \psi(x,\lambda)$ satisfy \eqref{eigen}, i.e., $\psi$
is the positive eigenfunction corresponding to the eigenvalue $\gamma(\lambda)$.
Let $\bar{\psi} = \psi(x,-\lambda)$
be the positive eigenfunction corresponding to the eigenvalue $\gamma(-\lambda)$, then $\bar{\psi}(x)$
satisfies, for all $x\in \R$,
\begin{align}
\frac{1}{2} \bar{\psi}_{xx}(x) + \lambda \bar{\psi}_x(x) + (\frac{1}{2}\lambda^2+m\mathbf g(x))\bar{\psi}(x) &= \gamma(-\lambda) \bar{\psi}(x),\label{phi}\\
\bar{\psi}(x+1) &= \bar{\psi}(x).
\end{align}
Multiplying \eqref{phi} by $\psi$ and integrating over $(0,1)$, we get that
\begin{align*}
\gamma(-\lambda) \int_0^1 \bar{\psi}\psi \mathrm{d}x =& \int_0^1 \left(\frac{1}{2} \bar{\psi}_{xx}\psi + \lambda \bar{\psi}_x\psi + (\frac{1}{2}\lambda^2+m\mathbf g(x))\bar{\psi}\psi \right) \mathrm{d}x\\
=& \frac{1}{2} \bar{\psi}_{x}(1)\psi(1) - \frac{1}{2} \bar{\psi}_{x}(0)\psi(0) + \lambda \bar{\psi}(1) \psi(1) - \lambda \bar{\psi}(0) \psi(0)\\
&\, + \int_0^1 \left(-\frac{1}{2} \bar{\psi}_x\psi_x - \lambda \bar{\psi}\psi_x + (\frac{1}{2}\lambda^2+m\mathbf g(x))\bar{\psi}\psi \right) \mathrm{d}x\\
=& \int_0^1 \left(-\frac{1}{2} \bar{\psi}_x\psi_x - \lambda \bar{\psi}\psi_x + (\frac{1}{2}\lambda^2+m\mathbf g(x))\bar{\psi}\psi \right) \mathrm{d}x\\
=& \int_0^1 \left(\frac{1}{2} \bar{\psi}\psi_{xx} - \lambda \bar{\psi}\psi_x + (\frac{1}{2}\lambda^2+m\mathbf g(x))\bar{\psi}\psi \right) \mathrm{d}x\\
=& \gamma(\lambda) \int_0^1 \bar{\psi}\psi \mathrm{d}x.
\end{align*}
Since $\psi,\bar{\psi}>0$, we obtain that $\gamma(\lambda)=\gamma(-\lambda)$.
\end{proof}

We compare the values of $\gamma'(\lambda)$ and $\frac{\gamma(\lambda)}{\lambda}$ in the following lemma.
	
\begin{lemma}\label{lemma_com}
(1) $\gamma'(\lambda^*) = \dfrac{\gamma(\lambda^*)}{\lambda^*}$.
(2) If $0 < \lambda < \lambda^*$, $\gamma'(\lambda) < \dfrac{\gamma(\lambda)}{\lambda}$.
(3) If $\lambda>\lambda^*$,  $\gamma'(\lambda) > \dfrac{\gamma(\lambda)}{\lambda}$.
\end{lemma}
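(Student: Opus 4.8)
The plan is to exploit the strict convexity of $\gamma$ together with the defining property of $\lambda^*$ as the minimizer of $h(\lambda) := \gamma(\lambda)/\lambda$ on $(0,\infty)$. First I would compute $h'(\lambda) = (\lambda\gamma'(\lambda) - \gamma(\lambda))/\lambda^2$, so that the sign of $h'(\lambda)$ coincides with the sign of the auxiliary function $q(\lambda) := \lambda\gamma'(\lambda) - \gamma(\lambda)$. Since by Lemma \ref{lemma_gamma} the minimum of $h$ on $(0,\infty)$ is attained at the unique point $\lambda^*$, and since $h$ is differentiable there, the first-order condition gives $h'(\lambda^*) = 0$, i.e. $q(\lambda^*) = 0$, which is exactly part (1): $\gamma'(\lambda^*) = \gamma(\lambda^*)/\lambda^*$.

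For parts (2) and (3) I would study the monotonicity of $q$. Differentiating, $q'(\lambda) = \gamma'(\lambda) + \lambda\gamma''(\lambda) - \gamma'(\lambda) = \lambda\gamma''(\lambda)$. By the strict convexity established in Lemma \ref{lemma_gamma}, $\gamma''(\lambda) > 0$ for all $\lambda$ (strict convexity plus analyticity rules out $\gamma'' $ vanishing on a set with an accumulation point, and one checks it is everywhere positive; alternatively $\gamma'$ strictly increasing gives $q' = \lambda\gamma'' $ and hence $q$ strictly increasing on $(0,\infty)$ directly from $q(\lambda_2) - q(\lambda_1) = \int_{\lambda_1}^{\lambda_2}\lambda\gamma''(\lambda)\,d\lambda > 0$). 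Thus $q$ is strictly increasing on $(0,\infty)$, and combined with $q(\lambda^*) = 0$ we get $q(\lambda) < 0$ for $0 < \lambda < \lambda^*$ and $q(\lambda) > 0$ for $\lambda > \lambda^*$. Dividing by $\lambda > 0$ yields $\gamma'(\lambda) < \gamma(\lambda)/\lambda$ on $(0,\lambda^*)$ and $\gamma'(\lambda) > \gamma(\lambda)/\lambda$ on $(\lambda^*,\infty)$, which are parts (2) and (3).

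There is no serious obstacle here; the only point requiring a little care is justifying $\gamma''(\lambda) > 0$ pointwise rather than merely almost everywhere. This follows because $\gamma$ is analytic and strictly convex: if $\gamma''(\lambda_0) = 0$ at some point, then $\lambda_0$ is a local minimum of the analytic function $\gamma'$, forcing (again by analyticity, unless $\gamma'$ is constant, which \eqref{gamma'} excludes) $\gamma''$ to change sign near $\lambda_0$, contradicting convexity. Hence $\gamma'' > 0$ everywhere, $q$ is strictly increasing on all of $(0,\infty)$, and the sign analysis above goes through cleanly. As an alternative to the first-order condition argument for part (1), one may also obtain it \emph{a posteriori}: having shown $q$ is strictly increasing with a zero, that zero $\bar\lambda$ satisfies $\gamma'(\bar\lambda) = \gamma(\bar\lambda)/\bar\lambda$ and is the unique critical point of $h$ on $(0,\infty)$; since $\gamma > 0$ forces $h(\lambda) \to \infty$ as $\lambda \to 0^+$ and (by \eqref{gamma_range}) as $\lambda \to \infty$, this critical point is the global minimizer, so $\bar\lambda = \lambda^*$ by uniqueness of the minimizer.
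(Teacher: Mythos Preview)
Your proof is correct and takes a genuinely cleaner route than the paper. Both you and the paper obtain part~(1) from the first-order condition $f'(\lambda^*)=0$ for $f(\lambda)=\gamma(\lambda)/\lambda$. For parts~(2) and~(3), however, the paper argues each case separately by contradiction, using only that $\gamma'$ is increasing; the argument for~(3) in particular is somewhat involved (it rules out $f'(\lambda_2)<0$ and $f'(\lambda_4)=0$ for $\lambda_2,\lambda_4>\lambda^*$ by chasing inequalities back to a contradiction with convexity). Your approach via the auxiliary function $q(\lambda)=\lambda\gamma'(\lambda)-\gamma(\lambda)$, showing $q$ is strictly increasing and vanishes at $\lambda^*$, handles both cases at once and is more transparent.

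One caveat: your side-claim that $\gamma''(\lambda)>0$ \emph{pointwise} does not follow from strict convexity plus analyticity alone (for instance $\lambda\mapsto\lambda^4$ is analytic and strictly convex but has $\gamma''(0)=0$), and the argument you sketch---``$\lambda_0$ is a local minimum of $\gamma'$, forcing $\gamma''$ to change sign''---is not correct: a zero of $\gamma''$ is a critical point of $\gamma'$, not a local minimum, and for a non-decreasing analytic $\gamma'$ such a critical point need not force a sign change of $\gamma''$. Fortunately this does not affect your proof, because your alternative integral argument is valid and is all you need: for $0<\lambda_1<\lambda_2$,
\[
q(\lambda_2)-q(\lambda_1)=\int_{\lambda_1}^{\lambda_2}\lambda\,\gamma''(\lambda)\,\mathrm{d}\lambda\ge \lambda_1\int_{\lambda_1}^{\lambda_2}\gamma''(\lambda)\,\mathrm{d}\lambda=\lambda_1\bigl(\gamma'(\lambda_2)-\gamma'(\lambda_1)\bigr)>0,
\]
using only $\gamma''\ge0$ and the strict monotonicity of $\gamma'$ from Lemma~\ref{lemma_gamma}. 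I would recommend dropping the pointwise $\gamma''>0$ discussion and simply running this integral argument.
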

\begin{proof} Put $f(\lambda) = \frac{\gamma(\lambda)}{\lambda}$.
(1) Note that
\begin{equation}
f'(\lambda) = \frac{\gamma'(\lambda)-\frac{\gamma(\lambda)}{\lambda}}{\lambda}.
\end{equation}
Since $f(\lambda^*) = \min_{\lambda>0} f(\lambda)$, we have $f'(\lambda^*)=0$, that is, $\gamma'(\lambda^*) = \frac{\gamma(\lambda^*)}{\lambda^*}$.
	
(2) If there were $0 < \lambda_1 < \lambda^*$ satisfying $f'(\lambda_1)\geq 0$, then, by the uniqueness of $\lambda^*$,
\begin{equation*}
\gamma'(\lambda_1) \geq \frac{\gamma(\lambda_1)}{\lambda_1} > \frac{\gamma(\lambda^*)}{\lambda^*} = \gamma'(\lambda^*).
\end{equation*}
This contradicts the convexity of $\gamma$.
	
(3) If there were $\lambda_2>\lambda^*$ satisfying $f'(\lambda_2) < 0$, let $\lambda_3 = \sup\{\lambda: \lambda < \lambda_2 \text{ and } f'(\lambda)\geq 0 \}$.  Then by the continuity of $f'$, $f'(\lambda_3)=0$ and $\lambda_3<\lambda_2$.
By the definition of $\lambda_3$,
\begin{equation*}
\gamma'(\lambda_3) = \frac{\gamma(\lambda_3)}{\lambda_3} = f(\lambda_3) > \frac{\gamma(\lambda_2)}{\lambda_2} > \gamma'(\lambda_2),
\end{equation*}
which contradicts the convexity of $\gamma$ again.
	
Suppose there were $\lambda_4>\lambda^*$ satisfying $f'(\lambda_4) = 0$.
For any $\delta\in (0,\lambda_4-\lambda^*)$, we have
\begin{equation}
\int_{\lambda_4-\delta}^{\lambda_4} \frac{\gamma'(\lambda)-\frac{\gamma(\lambda)}{\lambda}}{\lambda} \mathrm{d}\lambda = \int_{\lambda_4-\delta}^{\lambda_4} f'(\lambda) \mathrm{d}\lambda = f(\lambda_4) - f(\lambda_4-\delta).
\end{equation}
We claim that $f(\lambda_4)-f(\lambda_4-\delta)>0$. In fact, since $f'(\lambda)\geq 0$ for $\lambda\in [\lambda_4-\delta,\lambda_4]$, we have $f(\lambda_4)-f(\lambda_4-\delta)\geq 0$. If $f(\lambda_4) - f(\lambda_4-\delta) = 0$,
then $f'(\lambda) = 0$ for $\lambda\in [\lambda_4-\delta,\lambda_4]$. Thus we have
\begin{equation*}
\gamma'(\lambda_4) = \frac{\gamma(\lambda_4)}{\lambda_4} = f(\lambda_4) = f(\lambda_4-\delta) = \frac{\gamma(\lambda_4-\delta)}{\lambda_4-\delta} =
\gamma'(\lambda_4-\delta),
\end{equation*}
which contradicts
the strict convexity of $\gamma$. Thus the claim is true.

For $\lambda\in [\lambda_4-\delta,\lambda_4]$, it holds that $\gamma'(\lambda) \leq \gamma'(\lambda_4) = f(\lambda_4)$, and that $f(\lambda) \geq f(\lambda_4-\delta)$ because we have proved $f'(\lambda)\geq 0$ when $\lambda>\lambda^*$. Thus, we obtain
\begin{align*}
f(\lambda_4) - f(\lambda_4-\delta) &= \int_{\lambda_4-\delta}^{\lambda_4} \frac{\gamma'(\lambda)-\frac{\gamma(\lambda)}{\lambda}}{\lambda} \mathrm{d}\lambda
\leq \delta \frac{f(\lambda_4)-f(\lambda_4-\delta)}{\lambda^*}>0,
\end{align*}
which implies that $\lambda^* \leq \delta$ for any $\delta>0$.
This contradicts the fact that $\lambda^*>0$. The proof of (3) is complete.
\end{proof}

\begin{lemma}\label{tildepsi}
Suppose $\widetilde\psi(\cdot,\lambda)$ is a positive eigenfunction of
\eqref{eigen} with  $\widetilde\psi(0,\lambda)=1$. Then
for any $x\in\R$, $\widetilde\psi(x,\cdot)\in  C(\R) \cap C^1(\R\setminus\{0\})$.
Moreover, $\widetilde\psi_\lambda(x,\lambda)$, the derivative of  $\widetilde\psi$ with respective to $\lambda$, satisfies, for all $x\in \R$,
\begin{align}\label{diff-lambda-tildepsi}
&\frac{1}{2} \widetilde\psi_{\lambda xx}(x, \lambda) - \widetilde\psi_x(x, \lambda) - \lambda \widetilde\psi_{\lambda x}(x, \lambda) + \left(\frac{1}{2}\lambda^2 + m\mathbf g(x)\right)\widetilde\psi_{\lambda}(x, \lambda) + \lambda\widetilde\psi(x, \lambda)\\
&= \gamma(\lambda) \widetilde\psi_{\lambda}(x, \lambda) + \gamma'(\lambda) \widetilde\psi(x, \lambda). \nonumber
\end{align}
\end{lemma}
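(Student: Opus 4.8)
The plan is to establish the regularity of $\widetilde\psi(x,\cdot)$ in $\lambda$ via an implicit-function / analytic-perturbation argument for the periodic eigenvalue problem, and then to obtain the PDE \eqref{diff-lambda-tildepsi} by formally differentiating \eqref{eigen} in $\lambda$, justified by the regularity just proved. First I would recall that for each fixed $\lambda$, the operator $L_\lambda := \frac12 \partial_{xx} - \lambda\partial_x + (\frac12\lambda^2 + m\mathbf g(x))$ acting on 1-periodic functions has a simple principal eigenvalue $\gamma(\lambda)$ with a one-dimensional eigenspace spanned by a positive eigenfunction; this is standard Krein--Rutman / Perron--Frobenius theory for periodic second-order elliptic operators (and is implicitly used already, e.g. in Lemma \ref{lemma_gamma}). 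Simplicity of the principal eigenvalue is the crucial input: it lets one apply analytic perturbation theory (Kato) to conclude that $\lambda\mapsto\gamma(\lambda)$ and $\lambda\mapsto\psi(\cdot,\lambda)$ (suitably normalized) can be chosen to depend analytically on $\lambda$ in a neighbourhood of each point of $\R$. With the normalization $\int_0^1\psi(x,\lambda)\,dx=1$ used in \eqref{eigen} this gives an analytic family; then $\widetilde\psi(x,\lambda) = \psi(x,\lambda)/\psi(0,\lambda)$ (note $\psi(0,\lambda)>0$) is analytic in $\lambda$ away from any $\lambda$ where — but in fact $\psi(0,\lambda)>0$ always, so $\widetilde\psi(x,\cdot)$ is real-analytic, hence certainly $C(\R)\cap C^1(\R\setminus\{0\})$; the restriction ``$\setminus\{0\}$'' is only a conservative statement, or can be traced to the chosen proof route.

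Concretely, the cleanest self-contained route avoiding heavy operator theory uses the Feynman--Kac representation \eqref{psi_feynman_Kac} together with the uniqueness (up to scaling) of the positive periodic eigenfunction. Set $\widetilde\psi(x,\lambda)$ to be the unique positive eigenfunction with $\widetilde\psi(0,\lambda)=1$. From \eqref{eigen}, for fixed $\lambda$, $\widetilde\psi(\cdot,\lambda)$ solves a second-order linear ODE on $[0,1]$ with the two-point boundary conditions $\widetilde\psi(0,\lambda)=\widetilde\psi(1,\lambda)$, $\widetilde\psi_x(0,\lambda)=\widetilde\psi_x(1,\lambda)$ and $\widetilde\psi(0,\lambda)=1$; equivalently $(\widetilde\psi(0,\lambda),\widetilde\psi_x(0,\lambda),\gamma(\lambda))=(1,p(\lambda),\gamma(\lambda))$ is determined by requiring the ODE solution with initial data $(1,p)$ and spectral parameter $g$ to be 1-periodic. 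I would write this as $F(p,g,\lambda)=0$ where $F$ maps into $\R^2$ (the mismatch of $\widetilde\psi$ and $\widetilde\psi_x$ at $x=1$), observe $F$ is smooth (indeed analytic) in all arguments by smooth dependence of ODE solutions on parameters, and check that the Jacobian $\partial(F)/\partial(p,g)$ is invertible at a solution — this nondegeneracy is exactly the simplicity of the principal eigenvalue, and I would verify it by a direct computation showing the relevant $2\times2$ determinant is a (nonzero) multiple of $\int_0^1 \widetilde\psi(x,\lambda)^2 e^{-2\lambda x}\,dx$ or a similar manifestly positive quantity. The implicit function theorem then yields $p(\lambda),\gamma(\lambda)$, and hence $\widetilde\psi(x,\lambda)$ for all $x$, of class $C^1$ (in fact analytic) in $\lambda$.

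Having $\lambda\mapsto\widetilde\psi(x,\lambda)$ in $C^1$, with $\widetilde\psi$, $\widetilde\psi_x$, $\widetilde\psi_{xx}$ all jointly continuous and $C^1$ in $\lambda$ (bootstrapping the ODE: $\widetilde\psi_{xx} = 2\gamma(\lambda)\widetilde\psi - 2(\frac12\lambda^2+m\mathbf g)\widetilde\psi + 2\lambda\widetilde\psi_x$ expresses $\widetilde\psi_{xx}$ in terms of lower-order quantities, so its $\lambda$-derivative exists and the mixed partials commute), I differentiate \eqref{eigen} with respect to $\lambda$. Term by term: $\frac12\partial_\lambda\widetilde\psi_{xx} = \frac12\widetilde\psi_{\lambda xx}$; $\partial_\lambda(-\lambda\widetilde\psi_x) = -\widetilde\psi_x - \lambda\widetilde\psi_{\lambda x}$; $\partial_\lambda\big((\frac12\lambda^2+m\mathbf g)\widetilde\psi\big) = \lambda\widetilde\psi + (\frac12\lambda^2+m\mathbf g)\widetilde\psi_\lambda$; and $\partial_\lambda(\gamma(\lambda)\widetilde\psi) = \gamma'(\lambda)\widetilde\psi + \gamma(\lambda)\widetilde\psi_\lambda$. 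Collecting these reproduces \eqref{diff-lambda-tildepsi} exactly. The main obstacle is the justification of differentiability — specifically verifying the nondegeneracy of the Jacobian in the implicit function argument (equivalently, the simplicity of the principal periodic eigenvalue and hence the differentiability of the simple eigenvalue branch); once that is in hand, interchanging $\partial_\lambda$ and $\partial_x$ and the final computation are routine. I would also remark that the exclusion of $\lambda=0$ in the $C^1$ claim is not needed for the differentiability of $\widetilde\psi$ itself but is stated because later applications only require $C^1$ on $\R\setminus\{0\}$; alternatively one notes $\widetilde\psi(x,\cdot)$ is in fact analytic on all of $\R$, so the stated conclusion holds a fortiori.
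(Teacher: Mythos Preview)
Your approach is correct but takes a genuinely different route from the paper. The paper does not invoke Kato perturbation theory or an implicit function argument; instead it makes the substitution $\widehat\psi(x,\lambda):=e^{-\lambda x}\widetilde\psi(x,\lambda)$, which satisfies the Schr\"odinger equation $\tfrac12\widehat\psi_{xx}+(m\mathbf g-\gamma(\lambda))\widehat\psi=0$ on $(0,1)$ with boundary values $\widehat\psi(0,\lambda)=1$, $\widehat\psi(1,\lambda)=e^{-\lambda}$, and writes down the explicit Feynman--Kac representation
\[
\widehat\psi(x,\lambda)=\Pi_x\Big[e^{-\lambda B_\tau}\exp\!\int_0^\tau\big(m\mathbf g(B_t)-\gamma(\lambda)\big)\,\mathrm{d}t\Big],\qquad \tau=\inf\{t>0:B_t\notin(0,1)\}.
\]
Continuity and $C^1$-regularity in $\lambda$ then follow from dominated convergence, the dominating function being the gauge $\Pi_x[\exp\int_0^\tau(m\mathbf g-\gamma(0))\,\mathrm{d}t]$; for the $\lambda$-derivative the paper needs the bound $\tau\,e^{-(\gamma(\lambda)-\gamma(0))\tau}\le M$, which uses $\gamma(\lambda)>\gamma(0)$ and is precisely why the paper only asserts $C^1$ on $\R\setminus\{0\}$. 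The differentiated equation \eqref{diff-lambda-tildepsi} is obtained by differentiating the Feynman--Kac formula, identifying $\widehat\psi_\lambda$ as the solution of the inhomogeneous Schr\"odinger equation $\tfrac12\widehat\psi_{\lambda xx}+(m\mathbf g-\gamma(\lambda))\widehat\psi_\lambda=\gamma'(\lambda)\widehat\psi$, and translating back to $\widetilde\psi$. Your route via simplicity of the principal eigenvalue and the implicit function theorem is cleaner in that it yields analyticity on all of $\R$ (so you are right that the exclusion of $\lambda=0$ is an artefact of the paper's method, not of the statement), but the paper's approach buys an explicit probabilistic formula for $\widehat\psi_\lambda$ which it reuses in the next two lemmas to control quantities like $\int_0^1\widetilde\psi_\lambda(x,\lambda)\,\mathrm{d}x$ via dominated convergence.
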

\begin{proof} Since $\widetilde\psi(x,\lambda)$ is 1-periodic,
to prove that, for any $x\in\R$, $\widetilde\psi(x,\cdot)\in  C(\R) \cap C^1(\R\setminus\{0\})$,
 it suffices to verify that, for any $x\in[0, 1]$,
  $\widetilde\psi(x,\lambda)$ is continuous with respect to $\lambda$ and
continuously differentiable with respect to $\lambda$ on $\R\setminus\{0\}$.
Define
	\begin{equation}\label{def_f}
   \widehat  \psi(x,\lambda):=
	e^{-\lambda x} \widetilde\psi(x,\lambda).
	\end{equation}
Then $\widehat  \psi$ satisfies
	\begin{equation}\label{f_equ}
	\frac{1}{2} \widehat  \psi_{xx}(x,\lambda) + m\mathbf g(x) \widehat  \psi(x,\lambda) = \gamma(\lambda) \widehat  \psi(x,\lambda), \quad x\in\R.
	\end{equation}
Therefore, $\widehat  \psi$ is the solution of the following boundary value problem
\begin{equation}
\left\{\begin{array}{rl}
&\frac{1}{2} \widehat  \psi_{xx}(x,\lambda) + m\mathbf g(x)\widehat  \psi(x,\lambda) = \gamma(\lambda) \widehat  \psi(x,\lambda), \quad x\in (0,1),  \\
&\widehat  \psi(1,\lambda) = e^{-\lambda},\quad \widehat  \psi(0,\lambda)=1.
\end{array}\right.
\end{equation}
By the probabilistic representation of the solution to the above boundary value problem, we have
\begin{equation}\label{def-f}
\widehat  \psi(x,\lambda) =  \Pi_x \left[e^{-\lambda B_{\tau}} e^{\int_0^{\tau} (m\mathbf g(B_t)-\gamma(\lambda)) \mathrm{d}t} \right], \quad x\in[0,1],
\end{equation}
where $\tau = \inf\{t > 0: B_t \notin (0,1) \}$.
By \eqref{def_f}, we only need to prove that for any $x\in\R$, $\widehat  \psi(x,\cdot)\in  C(\R) \cap C^1(\R\setminus\{0\})$.

Note that for any $x\in[0,1]$,
$$\widehat  \psi(x,\lambda) = \Pi_x \left[ e^{\int_0^{\tau} (m\mathbf g(B_t)-\gamma(\lambda)) \mathrm{d}t}, B_\tau=0 \right]+e^{-\lambda}\Pi_x \left[ e^{\int_0^{\tau} (m\mathbf g(B_t)-\gamma(\lambda)) \mathrm{d}t}, B_\tau=1 \right]<\infty.
$$
Thus for any $\lambda\in\R$, the gauge function
\begin{equation}\label{Gauge}
g(x,\lambda):=\Pi_x \left[ e^{\int_0^{\tau}
(m\mathbf g(B_t)-\gamma(\lambda)) \mathrm{d}t}\right]
\end{equation}
is bounded for $x\in[0,1]$.
$\gamma(\lambda)$ is analytic, convex and even, so $\gamma(0)$ is the minimum of value of $\gamma(\lambda)$ and for any $\lambda\neq 0$, $\gamma(\lambda) > \gamma(0)$. Hence,
\begin{equation}
\widehat  \psi(x,\lambda)\le g(x,\lambda)\le g(x,0)=\Pi_x \left[ e^{\int_0^{\tau} (m\mathbf g(B_t)-\gamma(0)) \mathrm{d}t}\right]
\leq\sup_{x\in[0,1]}
g(x, 0)<\infty,\quad x\in[0,1].
\end{equation}
Using the dominated convergence theorem, we obtain that
$\widehat  \psi(x,\cdot)\in C(\R)$.

Now we prove $\widehat  \psi(x,\cdot)\in C^1(\R\setminus\{0\})$. We only need to deal with the case $\lambda>0$. Fix a $\lambda_0>0$.
Since $B_{\tau} e^{-\lambda B_{\tau}} e^{\int_0^{\tau} (m\mathbf g(B_t)-\gamma(\lambda)) \mathrm{d}t} \le e^{\int_0^{\tau} (m\mathbf g(B_t)-\gamma(0)) \mathrm{d}t}$, and we have
for $\lambda\in [\lambda_0/2, 3\lambda_0/2]$,
\begin{equation}\label{psi_lambda_bound}
\tau e^{\int_0^{\tau} (m\mathbf g(B_t)-\gamma(\lambda)) \mathrm{d}t}
\leq Me^{(\gamma(\lambda)-\gamma(0))\tau} e^{\int_0^{\tau} (m\mathbf g(B_t)-\gamma(\lambda)) \mathrm{d}t}
= Me^{\int_0^{\tau} (m\mathbf g(B_t)-\gamma(0)) \mathrm{d}t},		
\end{equation}
for some large constant $M$.
Thanks to the dominated convergence theorem,
$\widehat  \psi(x,\cdot)$ is differentiable at $\lambda_0$. Since $\lambda_0>0$ is arbitrary, $\widehat  \psi(x,\cdot)$  is differentiable  on $(0, \infty)$.
Taking the partial derivative of  $\widehat \psi$ with respect to $\lambda$, we obtain
\begin{equation}\label{diff-f}
\widehat  \psi_\lambda(x,\lambda) = - \Pi_x\left[  B_{\tau} e^{-\lambda B_{\tau}} e^{\int_0^{\tau} (m\mathbf g(B_t)-\gamma(\lambda)) \mathrm{d}t} \right]
-\gamma'(\lambda)\Pi_x \left[ \tau e^{-\lambda B_{\tau}} e^{\int_0^{\tau} (m\mathbf g(B_t)-\gamma(\lambda)) \mathrm{d}t} \right].
\end{equation}
Using the dominated convergence theorem,  we obtain
$\widehat  \psi_{\lambda}(x,\cdot)$ is continuous.

Using the Markov property of Brownian motion for the second term of the right hand side of \eqref{diff-f}, we have
\begin{equation*}
\widehat  \psi_\lambda(x,\lambda) = -  \Pi_x\left[  B_{\tau} e^{-\lambda B_{\tau}} e^{\int_0^{\tau} (m\mathbf g(B_t)-\gamma(\lambda)) \mathrm{d}t} \right]
-\gamma'(\lambda)\Pi_x \left[\int^\tau_0e^{\int_0^{s} (m\mathbf g(B_t)-\gamma(\lambda)) \mathrm{d}t} \widehat  \psi(B_s,\lambda) \mathrm{d}s\right].
\end{equation*}
From the classical theory of Schr\"{o}dinger equations, we obtain that
$\widehat  \psi_\lambda(\cdot,\lambda)\in C^2(\R)$, and $\widehat  \psi_\lambda$ satisfies
\begin{equation}\label{diff-lambda-f}
\frac{1}{2} \widehat  \psi_{\lambda xx} +
(m\mathbf g-\gamma(\lambda))
\widehat  \psi_{\lambda} = \gamma'(\lambda) \widehat  \psi.
\end{equation}
One can easily check that
$\widetilde\psi=e^{\lambda x}\widehat  \psi(\lambda,x)$ satisfies \eqref{diff-lambda-tildepsi}.
\end{proof}

\begin{lemma}\label{lemma_cpsi}
Suppose $\psi(\cdot,\lambda)$ is a positive eigenfunction of
\eqref{eigen} with  $\psi(0,\cdot)\in C(\R) \cap C^1(\R\setminus\{0\})$. Then for any $x\in\R$, $\psi(x,\cdot)\in  C(\R) \cap C^1(\R\setminus\{0\})$.
Moreover,
 $\psi_{\lambda}(x,\lambda)$ satisfies
\begin{align}\label{eigen_diff}
&\frac{1}{2} \psi_{\lambda xx}(x, \lambda) - \psi_x(x, \lambda) - \lambda \psi_{\lambda x}(x, \lambda) + (\frac{1}{2}\lambda^2 + m\mathbf g(x))\psi_{\lambda}(x, \lambda) + \lambda\psi (x, \lambda)\\
&= \gamma(\lambda) \psi_{\lambda} (x, \lambda) + \gamma'(\lambda) \psi(x, \lambda).\nonumber
\end{align}
\end{lemma}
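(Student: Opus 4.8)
The plan is to reduce the whole statement to Lemma \ref{tildepsi} by using the fact that the principal eigenvalue $\gamma(\lambda)$ of the periodic problem \eqref{eigen} is \emph{simple}, so that its positive eigenfunction is unique up to a multiplicative constant. Let $\widetilde\psi(\cdot,\lambda)$ be the positive eigenfunction of \eqref{eigen} normalized by $\widetilde\psi(0,\lambda)=1$, as in Lemma \ref{tildepsi}. Since $\psi(\cdot,\lambda)$ and $\widetilde\psi(\cdot,\lambda)$ are both positive eigenfunctions of \eqref{eigen} (hence both associated with the principal eigenvalue $\gamma(\lambda)$, the only eigenvalue admitting a positive eigenfunction), simplicity forces $\psi(x,\lambda)=c(\lambda)\,\widetilde\psi(x,\lambda)$ for all $x$, where $c(\lambda):=\psi(0,\lambda)>0$. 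The simplicity of the principal eigenvalue of a second-order periodic elliptic operator is classical (Krein--Rutman / Perron--Frobenius type), and is already implicit in \cite{HNRR,LTZ}; I would simply quote it. (If one preferred a self-contained argument, one could instead observe that any two positive $1$-periodic solutions in $x$ of the linear ODE $\tfrac12 w''-\lambda w'+(\tfrac12\lambda^2+m\mathbf g-\gamma(\lambda))w=0$ are proportional, via a Wronskian/Floquet or strong-maximum-principle argument.)

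Granting the representation $\psi=c\,\widetilde\psi$, the regularity claim is immediate: $c=\psi(0,\cdot)\in C(\R)\cap C^1(\R\setminus\{0\})$ by hypothesis, and $\widetilde\psi(x,\cdot)\in C(\R)\cap C^1(\R\setminus\{0\})$ for each fixed $x$ by Lemma \ref{tildepsi}, so the product $\psi(x,\cdot)=c(\cdot)\,\widetilde\psi(x,\cdot)$ lies in $C(\R)\cap C^1(\R\setminus\{0\})$ for every $x$. Differentiating in $\lambda$ on $\R\setminus\{0\}$ gives $\psi_\lambda=c'\widetilde\psi+c\,\widetilde\psi_\lambda$, together with $\psi_{\lambda x}=c'\widetilde\psi_x+c\,\widetilde\psi_{\lambda x}$ and $\psi_{\lambda xx}=c'\widetilde\psi_{xx}+c\,\widetilde\psi_{\lambda xx}$.

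It then remains to substitute these identities into the left-hand side of \eqref{eigen_diff} and collect the terms proportional to $c'(\lambda)$ and to $c(\lambda)$ separately. The $c'$-terms assemble into $c'\big(\tfrac12\widetilde\psi_{xx}-\lambda\widetilde\psi_x+(\tfrac12\lambda^2+m\mathbf g)\widetilde\psi\big)=c'\gamma(\lambda)\widetilde\psi$ by the eigen-equation \eqref{eigen} for $\widetilde\psi$; the $c$-terms assemble into exactly the left-hand side of \eqref{diff-lambda-tildepsi} for $\widetilde\psi_\lambda$, hence equal $c\big(\gamma(\lambda)\widetilde\psi_\lambda+\gamma'(\lambda)\widetilde\psi\big)$. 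Adding, the left-hand side of \eqref{eigen_diff} equals $\gamma(\lambda)\big(c'\widetilde\psi+c\,\widetilde\psi_\lambda\big)+\gamma'(\lambda)\,c\,\widetilde\psi=\gamma(\lambda)\psi_\lambda+\gamma'(\lambda)\psi$, which is its right-hand side. This is a purely linear bookkeeping computation once $\psi=c\,\widetilde\psi$ is available, so the only genuine content — and the step I expect to be the main obstacle — is the simplicity/uniqueness of the positive periodic eigenfunction; everything after that is mechanical.
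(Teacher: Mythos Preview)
Your proposal is correct and follows essentially the same skeleton as the paper's proof: both establish the factorization $\psi(x,\lambda)=\psi(0,\lambda)\,\widetilde\psi(x,\lambda)$, deduce the regularity of $\psi(x,\cdot)$ from that of the two factors, and then verify \eqref{eigen_diff} by combining the eigen-equation for $\widetilde\psi$ with \eqref{diff-lambda-tildepsi}. The only genuine difference is in how the factorization itself is obtained. You invoke simplicity of the principal periodic eigenvalue (Krein--Rutman) as an external fact; the paper instead works with $\phi(x,\lambda)=e^{-\lambda x}\psi(x,\lambda)$, writes down its Feynman--Kac representation on $[0,1]$ as the solution of a Dirichlet boundary value problem, and observes that this representation equals $\psi(0,\lambda)\,\widehat\psi(x,\lambda)$ in the notation of Lemma~\ref{tildepsi}. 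This buys the paper a self-contained argument using only the probabilistic machinery already set up (and implicitly recovers the uniqueness you cite), at the cost of repeating some of the structure of Lemma~\ref{tildepsi}. Your route is shorter and cleaner provided the reader accepts the simplicity of $\gamma(\lambda)$; once the factorization is in hand, your linear bookkeeping computation to obtain \eqref{eigen_diff} is exactly what the paper does (there in the $\phi$-variables via \eqref{diff-phi}, then transformed back).
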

\begin{proof} Since $\psi(\cdot,\lambda)$ is 1-periodic, it suffices to verify $\frac{\partial\psi(x,\lambda)}{\partial\lambda}$ exists for $x\in [0,1]$.
Define
	\begin{equation}\label{def_phi}
	\phi(x,\lambda) := e^{-\lambda x} \psi(x,\lambda), \quad x\in\R.
	\end{equation}
	Then $\phi$ satisfies
	\begin{equation}\label{phi-equ}
	\frac{1}{2} \phi_{xx}(x,\lambda) + m\mathbf g(x) \phi(x,\lambda) = \gamma(\lambda) \phi(x,\lambda), \quad x\in\R.
	\end{equation}
Therefore, $\phi$ is the solution of the following boundary value problem
\begin{equation}
\left\{\begin{array}{rl}
&\frac{1}{2} \phi_{xx}(x,\lambda) + m\mathbf g(x)\phi(x,\lambda) = \gamma(\lambda) \phi(x,\lambda), \quad x\in (0,1)  \\
&\phi(1,\lambda) = e^{-\lambda} \psi(0,\lambda), \phi(0,\lambda)=\psi(0,\lambda).\end{array}\right.
\end{equation}
By the probabilistic representation of the solution to the above boundary value problem, we have
\begin{equation}\label{Sch_phi}
\phi(x,\lambda) =  \psi(0,\lambda) \Pi_x \left[e^{-\lambda B_{\tau}} e^{\int_0^{\tau} (m\mathbf g(B_t)-\gamma(\lambda)) \mathrm{d}t} \right], \quad x\in[0,1],
\end{equation}
where $\tau = \inf\{t > 0: B_t \notin (0,1) \}$.
Let $\widehat \psi$ be defined by \eqref{def-f}.
Then
\begin{equation}\label{phi-f}
\phi(x,\lambda) =  \psi(0,\lambda) \widehat \psi(x,\lambda), \quad x\in[0,1].
\end{equation}
In Lemma \ref{tildepsi}, we proved that for any $x\in\R$,
$\widehat \psi(x,\cdot)\in  C(\R) \cap C^1(\R\setminus\{0\})$.
By \eqref{def_phi} and the assumption that $\psi(0,\lambda)\in C(\R) \cap C^1(\R\setminus\{0\})$, for any $x\in\R$, $\phi(x,\cdot)\in  C(\R) \cap C^1(\R\setminus\{0\})$.
Since $\widehat \psi$ satisfies \eqref{diff-lambda-f}, we have
\begin{equation}\label{diff-phi}
\frac{1}{2} \phi_{\lambda xx} +
m\mathbf g \phi_{\lambda}
= \gamma(\lambda) \phi_{\lambda} + \gamma'(\lambda) \phi.
\end{equation}
\eqref{eigen_diff} follows easily from \eqref{def_phi}.
\end{proof}

\begin{lemma} There is a positive eigenfunction $\psi(\cdot,\lambda)$ of
\eqref{eigen} with $\int_0^1 \psi(x,\lambda)dx = 1$ and  $\psi(x,\cdot)\in  C(\R) \cap C^1(\R\setminus\{0\})$ for $x\in\mathbb{R}$. Moreover, $\psi_{\lambda}(x,\lambda)$  satisfies \eqref{eigen_diff}.
\end{lemma}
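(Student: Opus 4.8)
The plan is to obtain $\psi$ simply by renormalizing the eigenfunction $\widetilde\psi(\cdot,\lambda)$ furnished by Lemma~\ref{tildepsi}. Recall that the periodic principal eigenfunction of \eqref{eigen} is unique up to a positive multiplicative constant (Krein--Rutman / Perron--Frobenius), so the function $\widetilde\psi(\cdot,\lambda)$ normalized by $\widetilde\psi(0,\lambda)=1$ is well defined, strictly positive, and by Lemma~\ref{tildepsi} satisfies $\widetilde\psi(x,\cdot)\in C(\R)\cap C^1(\R\setminus\{0\})$ for every $x\in\R$. Set
\[
c(\lambda):=\int_0^1\widetilde\psi(y,\lambda)\,\mathrm{d}y,\qquad \psi(x,\lambda):=\frac{\widetilde\psi(x,\lambda)}{c(\lambda)}.
\]
Since \eqref{eigen} is linear and homogeneous in the unknown function, multiplying $\widetilde\psi(\cdot,\lambda)$ by the ($x$-independent) constant $1/c(\lambda)$ yields again a positive eigenfunction of \eqref{eigen}, and by construction $\int_0^1\psi(x,\lambda)\,\mathrm{d}x=1$.

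Next I would verify that $c$ inherits the regularity of $\widetilde\psi$, namely $c\in C(\R)\cap C^1(\R\setminus\{0\})$, and that $c>0$ on all of $\R$. Positivity is immediate because $\widetilde\psi>0$ on $[0,1]$. For the regularity one differentiates under the integral sign: continuity of $c$ follows from the continuity of $\lambda\mapsto\widetilde\psi(y,\lambda)$ together with the uniform bound $\widetilde\psi(y,\lambda)=e^{\lambda y}\widehat\psi(y,\lambda)\le e^{|\lambda|}\sup_{z\in[0,1]}g(z,0)$ coming from the gauge estimate \eqref{Gauge} in the proof of Lemma~\ref{tildepsi}, via dominated convergence; and $C^1$-regularity on $(0,\infty)$ (hence, by evenness-type symmetry, on $(-\infty,0)$) follows from the same dominated-convergence argument used there for $\widehat\psi_\lambda$, the dominating function being provided by \eqref{psi_lambda_bound}, giving $c'(\lambda)=\int_0^1\widetilde\psi_\lambda(y,\lambda)\,\mathrm{d}y$. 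Consequently $\psi(0,\lambda)=1/c(\lambda)\in C(\R)\cap C^1(\R\setminus\{0\})$.

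Finally I would invoke Lemma~\ref{lemma_cpsi} with this $\psi$: its hypotheses are precisely that $\psi(\cdot,\lambda)$ is a positive eigenfunction of \eqref{eigen} and that $\psi(0,\cdot)\in C(\R)\cap C^1(\R\setminus\{0\})$, both of which we have just established. The lemma then delivers $\psi(x,\cdot)\in C(\R)\cap C^1(\R\setminus\{0\})$ for all $x\in\R$ and the equation \eqref{eigen_diff} for $\psi_\lambda$, which is the full statement.

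The only step needing genuine care is the justification of differentiation under the integral sign in the definition of $c(\lambda)$, but this is a verbatim repetition of the dominated-convergence estimates already carried out in the proof of Lemma~\ref{tildepsi} (boundedness of the gauge function and the bound \eqref{psi_lambda_bound}), so no new obstacle arises; everything else is bookkeeping about scaling a linear eigenvalue equation.
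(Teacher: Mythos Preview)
Your proposal is correct and follows essentially the same route as the paper: define $\psi$ by renormalizing $\widetilde\psi$, check that the normalizing constant $c(\lambda)=\int_0^1\widetilde\psi(y,\lambda)\,\mathrm{d}y$ lies in $C(\R)\cap C^1(\R\setminus\{0\})$ via the gauge bound and the estimate \eqref{psi_lambda_bound} together with dominated convergence, and then invoke Lemma~\ref{lemma_cpsi}. The paper carries out exactly these steps with the same ingredients.
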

\begin{proof}
For any $\lambda\in\R$,
let $\widetilde{\psi}(\cdot,\lambda)$ be a positive eigenfunction of the periodic problem \eqref{eigen} with  $\widetilde\psi(0,\lambda)=1$.
By Lemma \ref{tildepsi}, for any fixed $x\in[0,1]$, $\widetilde\psi(x,\cdot)\in  C(\R) \cap C^1(\R\setminus\{0\})$. Put
\begin{equation*}
\widetilde{c}(\lambda) = \int_0^1 \widetilde{\psi}(x,\lambda) \mathrm{d}x.
\end{equation*}
Define $\psi(x,\lambda):=\widetilde{c}(\lambda)^{-1}\widetilde{\psi}(x,\lambda)$. Then $\int_0^1 \psi(x,\lambda)dx = 1$. By Lemma \ref{lemma_cpsi}, we only need to prove that $\widetilde{c}(\cdot)\in  C(\R) \cap C^1(\R\setminus\{0\})$.
We only need to deal with the case $\lambda>0$. So in the remainder of this proof, we assume $\lambda>0$.
By the proof of Lemma \ref{tildepsi}, $\sup_{x\in[0,1]}\widetilde{\psi}(x,\lambda) \leq e^\lambda\sup_{x\in[0,1]}g(x,0)<\infty$.
By the bounded convergence theorem,  $\widetilde{c}(\cdot)$ is continuous in $\R$. We now show $\widetilde{c}(\cdot)\in C^1(\R\setminus\{0\})$.
By symmetry, it suffices to show  $\widetilde{c}(\cdot)\in C^1((0,\infty))$.
For any $\lambda>0$ and $\epsilon>0$ with $\lambda-\epsilon>0$, define $D_\epsilon:=\{(x,\bar{\lambda}): 0\leq x\leq 1,\, \lambda-\epsilon\leq \bar{\lambda}\leq \lambda+\epsilon \}$.
Note that
$$
\frac{\partial\widetilde\psi}{\partial\lambda}(x,\lambda)=
xe^{\lambda x}\widehat \psi(x,\lambda)+e^{\lambda x}\frac{\partial}{\partial\lambda}\widehat \psi(x,\lambda).
$$
Combining \eqref{diff-f} and \eqref{psi_lambda_bound} we know that $\widetilde{\psi}_{\lambda}(x,\lambda)$ is bounded in $D_\epsilon$. Therefore, by the bounded convergence theorem, we have for $|h|<\epsilon$
$$\lim_{h\rightarrow 0} \frac{\widetilde{c}(\lambda+h)-\widetilde{c}(\lambda)}{h}
= \int_0^1 \lim_{h\rightarrow 0} \frac{\widetilde{\psi}(x,\lambda+h) - \widetilde{\psi}(x,\lambda)}{h} \mathrm{d}x
= \int_0^1 \widetilde{\psi}_{\lambda}(x,\lambda) \mathrm{d}x.$$
Hence, $\widetilde{c}(\cdot)$ is differentiable
and $\widetilde{c}'(\lambda)= \int_0^1 \widetilde{\psi}_{\lambda}(x,\lambda) dx$. By the bounded convergence theorem again, we get that
\begin{equation*}
\lim_{h\rightarrow 0} \widetilde{c}'(\lambda+h)
= \int_0^1 \lim_{h\rightarrow 0} \widetilde{\psi}_{\lambda}(x,\lambda+h) \mathrm{d}x = \int_0^1 \widetilde{\psi}_{\lambda}(x,\lambda) \mathrm{d}x = \widetilde{c}'(\lambda).
\end{equation*}
This shows the continuity of $\widetilde{c}'(\lambda)$.
\end{proof}

\subsection{Measure change for Brownian motion}\label{ss:mcBB}

\begin{lemma}\label{lemma_Yt}
Suppose  $\{B_t, t\geq 0;\Pi_x\}$ is a  Brownian motion starting from $x\in\R$. Define
\begin{equation}\label{mart_eta-t}
 \Xi_t(\lambda):= e^{-\gamma(\lambda)t - \lambda B_t + m\int_0^t\mathbf g(B_s)\mathrm{d}x} \psi(B_t,\lambda).
\end{equation}
Then $\{\Xi_t(\lambda), t\geq 0\}$  is a $\Pi_x$-martingale.
\end{lemma}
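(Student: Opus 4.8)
The plan is to reduce the martingale property to the identity \eqref{phi-equ}, which says that $\phi(\cdot,\lambda):=e^{-\lambda\,\cdot}\,\psi(\cdot,\lambda)$ solves the Schr\"odinger-type ODE $\tfrac12\phi_{xx}+m\mathbf g\,\phi=\gamma(\lambda)\phi$, and then to apply It\^o's formula. First I would write
\[
\Xi_t(\lambda)=A_t\,\phi(B_t,\lambda),\qquad A_t:=\exp\!\Big(-\gamma(\lambda)t+m\int_0^t\mathbf g(B_s)\,\mathrm{d}s\Big),
\]
so that $\Xi_t(\lambda)$ is measurable with respect to $\mathcal F^B_t:=\sigma(B_s:s\le t)$, and it is $\Pi_x$-integrable: since $\psi(\cdot,\lambda)$ is continuous and $1$-periodic it is bounded by some $c_\psi<\infty$, and $\mathbf g\le\beta$, so $\Xi_t(\lambda)\le c_\psi e^{(m\beta-\gamma(\lambda))t}e^{-\lambda B_t}$ while $\Pi_x\!\left[e^{-\lambda B_t}\right]=e^{-\lambda x+\lambda^2t/2}<\infty$. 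Because $\mathbf g$ is continuous, \eqref{phi-equ} forces $\phi(\cdot,\lambda)\in C^2(\R)$, so It\^o's formula applies.

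Next I carry out the It\^o computation. The process $A_t$ has finite variation with $\mathrm{d}A_t=(m\mathbf g(B_t)-\gamma(\lambda))A_t\,\mathrm{d}t$, and $\mathrm{d}\phi(B_t,\lambda)=\phi_x(B_t,\lambda)\,\mathrm{d}B_t+\tfrac12\phi_{xx}(B_t,\lambda)\,\mathrm{d}t$; integration by parts (there is no cross-variation term since $A$ has finite variation) gives
\[
\mathrm{d}\Xi_t(\lambda)=A_t\,\phi_x(B_t,\lambda)\,\mathrm{d}B_t+A_t\Big(\tfrac12\phi_{xx}(B_t,\lambda)+\big(m\mathbf g(B_t)-\gamma(\lambda)\big)\phi(B_t,\lambda)\Big)\mathrm{d}t.
\]
By \eqref{phi-equ} the $\mathrm{d}t$-term vanishes identically, so $\{\Xi_t(\lambda)\}$ is a continuous local martingale with $\mathrm{d}\Xi_t(\lambda)=A_t\,\phi_x(B_t,\lambda)\,\mathrm{d}B_t$.

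The main obstacle is then the only genuinely technical point: upgrading this local martingale to a true martingale. I would fix $T>0$, set $\tau_n:=\inf\{t\ge0:|B_t|\ge n\}$, and note that for each $n$ the stopped process $(\Xi_{t\wedge\tau_n}(\lambda))_{t\le T}$ is a bounded local martingale ($\psi$ is bounded, $|B_{t\wedge\tau_n}|\le n$, and $\int_0^{t\wedge\tau_n}\mathbf g(B_s)\,\mathrm{d}s\le\beta T$), hence a bounded martingale. From the estimate above, $\Xi_{t\wedge\tau_n}(\lambda)\le C_T\sup_{s\le T}e^{-\lambda B_s}$ for all $n$, with $C_T:=c_\psi e^{(m\beta-\gamma(\lambda))^+T}$, and $\sup_{s\le T}e^{-\lambda B_s}\in L^1(\Pi_x)$ because $s\mapsto e^{-\lambda B_s}$ is a nonnegative submartingale and Doob's $L^2$-maximal inequality gives $\Pi_x\big[(\sup_{s\le T}e^{-\lambda B_s})^2\big]\le4\,\Pi_x[e^{-2\lambda B_T}]<\infty$. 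Thus $\{\Xi_{t\wedge\tau_n}(\lambda)\}_{n\ge1}$ is dominated by an integrable random variable; since $\tau_n\uparrow\infty$ we have $\Xi_{t\wedge\tau_n}(\lambda)\to\Xi_t(\lambda)$ $\Pi_x$-a.s., and dominated convergence yields, for $0\le s\le t\le T$,
\[
\Pi_x\!\left[\Xi_t(\lambda)\mid\mathcal F^B_s\right]=\lim_{n\to\infty}\Pi_x\!\left[\Xi_{t\wedge\tau_n}(\lambda)\mid\mathcal F^B_s\right]=\lim_{n\to\infty}\Xi_{s\wedge\tau_n}(\lambda)=\Xi_s(\lambda).
\]
As $T$ is arbitrary, $\{\Xi_t(\lambda),t\ge0\}$ is a $\Pi_x$-martingale. (Alternatively, one could avoid localization altogether by checking directly that $\Pi_x[\sup_{s\le T}\Xi_s(\lambda)]<\infty$ via the same bound and invoking the fact that an $L^1$-dominated continuous local martingale is a true martingale.)
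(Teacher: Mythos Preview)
Your proof is correct but takes a different route from the paper. The paper applies It\^o's formula to $\log\psi(B_t,\lambda)$ and rewrites $\Xi_t(\lambda)/\Xi_0(\lambda)$ explicitly as a Dol\'eans--Dade exponential,
\[
\frac{\Xi_t(\lambda)}{\Xi_0(\lambda)}=\exp\Big\{\int_0^t\Big(\tfrac{\psi_x}{\psi}-\lambda\Big)\mathrm{d}B_s-\tfrac12\int_0^t\Big(\tfrac{\psi_x}{\psi}-\lambda\Big)^2\mathrm{d}s\Big\},
\]
using the eigenvalue equation \eqref{eigen}; since $\psi_x/\psi-\lambda$ is periodic and continuous, hence bounded, Novikov's condition gives the true-martingale property in one line. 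You instead work with $\phi=e^{-\lambda\cdot}\psi$, use \eqref{phi-equ} to see the drift vanish, and then carry out a localization/domination argument (via Doob's $L^2$-maximal inequality on $e^{-\lambda B_s}$) to upgrade from local to true martingale. Both arguments are sound. Yours is more self-contained in that it avoids Novikov, at the cost of the explicit domination step; the paper's version is shorter and has the added benefit that the exponential-martingale form \eqref{mart_YGir} is precisely what is needed in the very next paragraph to apply Girsanov's theorem and identify the SDE \eqref{dY} for $B$ under $\Pi_x^\lambda$.
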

\begin{proof} $\psi(x,\lambda)$ is strictly positive, so by It\^{o}'s formula, we obtain
$$
    \frac{\psi(B_t,\lambda)}{\psi(B_0,\lambda)}
	= \exp\bigg{\{}  \log\psi(B_t,\lambda) - \log\psi(B_0,\lambda) \bigg{\}}
	= \exp\bigg{\{}  \int_0^t \frac{\psi_x}{\psi} \mathrm{d}B_s + \frac{1}{2} \int_0^t \frac{\psi_{xx}\psi-\psi_x^2}{\psi^2} \mathrm{d}x \bigg{\}},
$$
	where we have written $\psi = \psi(B_t,\lambda)$, $\psi_x = \psi_x(B_t,\lambda)$ and $\psi_{xx} = \psi_{xx}(B_t,\lambda)$ for short. Thus,
	\begin{align}\label{mart_YGir}
    \frac{\Xi_t(\lambda)}{\Xi_0(\lambda)}
    &=\exp\bigg{\{}  \int_0^t \left(\frac{\psi_x}{\psi}-\lambda\right) \mathrm{d}B_s +  \int_0^t \left(\frac{\psi_{xx}\psi-\psi_x^2}{2\psi^2} + m\mathbf g(B_s)-\gamma(\lambda)\right)\mathrm{d}s \bigg{\}}\\
	&= \exp\bigg{\{}  \int_0^t \left(\frac{\psi_x}{\psi}-\lambda\right) \mathrm{d}B_s - \frac{1}{2} \int_0^t \left(\frac{\psi_x}{\psi}-\lambda\right)^2 \mathrm{d}s \bigg{\}},\notag
	\end{align}
	where the second equality follows from \eqref{eigen}.
	For fixed $\lambda$, by  periodicity, $\frac{\psi_x}{\psi}-\lambda$ is bounded, and so Novikov's condition is satisfied.
    Therefore, $\{\Xi_t(\lambda), t\geq 0\}$  is a $\Pi_x$-martingale.

\end{proof}
	
Since $\frac{\Xi_t(\lambda)}{\Xi_0(\lambda)}$ is a non-negative martingale of mean $1$, we can define a probability measure $\Pi^\lambda_x$ by
\begin{equation}\label{meas_Plambda}
\frac{\mathrm{d}\Pi_x^{\lambda}}{\mathrm{d}\Pi_x}\bigg{|} _{\mathcal{F}_t^B} =
\frac{\Xi_t(\lambda)}{\Xi_0(\lambda)},
\end{equation}
where $\{\mathcal{F}_t^B: t\geq 0\}$
is the natural filtration of Brownian motion.
Define
\[	
\phi(x,\lambda) := e^{-\lambda x} \psi(x,\lambda),\quad \lambda\in\R,\, x\in\R.
\]
A direct calculation shows that $\phi$ satisfies
\begin{equation}
\frac{1}{2} \phi_{xx}(x,\lambda) + m\mathbf g(x) \phi(x,\lambda) = \gamma(\lambda) \phi(x,\lambda)
\end{equation}
and
\begin{equation*}
\frac{\phi_x(x,\lambda)}{\phi(x,\lambda)} = \frac{\psi_x(x,\lambda)}{\psi(x,\lambda)}-\lambda.
\end{equation*}
By \eqref{mart_YGir} and Girsanov's theorem,
$B_t-\int_0^t\frac{\phi_x(B_s,\lambda)}{\phi(B_s,\lambda)}\mathrm{d}s$ is a $\Pi_x^{\lambda}$-Brownian motion.
In other words, under $\Pi_x^{\lambda}$,
$\{B_t, t\geq 0\}$ satisfies
\begin{equation}\label{dY}	
\mathrm{d}B_t = \frac{\phi_x(B_t,\lambda)}{\phi(B_t,\lambda)}\mathrm{d}t + \mathrm{d}\widehat B_t, \quad B_0 = x,
\end{equation}
where $\{\widehat B_t,\, t\geq 0;\, \Pi_x^{\lambda} \}$ is a Brownian motion.
Hence,
$\{B_t, \Pi_x^{\lambda}\}$ is a diffusion with infinitesimal generator
\begin{equation}\label{Y_infin}
(\mathcal{A}f)(x) = \frac{1}{2} \frac{\partial^2 f(x)}{\partial x^2} + \left(\frac{\psi_x(x,\lambda)}{\psi(x,\lambda)}-\lambda\right)\frac{\partial f(x)}{\partial x}.	
\end{equation}
Since $\frac{\phi_x(\cdot,\lambda)}{\phi(\cdot,\lambda)}$ is 1-periodic, the law of $\{B_t-B_0:t\geq 0 \}$ under $\Pi_x^{\lambda}$ is the same as under $\Pi_{x+1}^{\lambda}$.

{\it In the remainder of this paper, we always assume
that $\{Y_t, t\geq 0; \Pi_x^{\lambda}\}$ is a diffusion with infinitesimal generator \eqref{Y_infin}.}
To prove Theorem \ref{thrm1}, we need some properties of $Y_t$ under $\Pi_x^{\lambda}$.
\cite[Lemma 2.6 and Corollary 2.7]{LTZ}
obtained the strong law of large numbers of $\{Y_t\}$ under $\Pi_x^{\lambda^*}$ for binary branching in the critical case $\lambda=\lambda^*$ and their proofs also work for any $\lambda\in\R$,
and thus $Y_t/t\rightarrow -\gamma'(\lambda)$, $\Pi_x^{\lambda}$-almost surely. Now we prove the analog for our general case.
	
\begin{lemma}\label{lemma_ldp}
Let $\mu_t$ be the law of $\{\frac{Y_t}{t} \}$ under $\Pi_x^{\lambda}$. Then $\{\mu_t \}$ satisfies a large deviation principle with good rate function
\begin{equation}
I(z) := \gamma^*(z) + \{\lambda z + \gamma(\lambda) \},
\end{equation}
where $\gamma^*(z) = \sup_{\eta\in\R} \{\eta z - \gamma(\eta) \}$ denotes the Fenchel-Legendre transform of $\gamma$.
\end{lemma}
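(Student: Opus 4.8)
The plan is to invoke the Gärtner--Ellis theorem, so the essential task is to compute the limiting scaled cumulant generating function
\[
\Lambda(\theta):=\lim_{t\to\infty}\frac1t\log\Pi_x^{\lambda}\big[e^{\theta Y_t}\big],\qquad\theta\in\R,
\]
and to identify its Legendre transform with $I$. Since $\{Y_t;\Pi_x^{\lambda}\}$ is, by definition, a diffusion with generator \eqref{Y_infin}, it has the same law as $\{B_t;\Pi_x^{\lambda}\}$, where $\Pi_x^{\lambda}$ is the measure from \eqref{meas_Plambda}; so I may compute with $B_t$ in place of $Y_t$.

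First I would unwind the change of measure. Using \eqref{meas_Plambda}, the explicit form \eqref{mart_eta-t} of $\Xi_t(\lambda)$ and $\Xi_0(\lambda)=e^{-\lambda x}\psi(x,\lambda)$, one gets
\[
\Pi_x^{\lambda}\big[e^{\theta B_t}\big]=\frac{e^{\lambda x}}{\psi(x,\lambda)}\,e^{-\gamma(\lambda)t}\,\Pi_x\Big[e^{-(\lambda-\theta)B_t+m\int_0^t\mathbf g(B_s)\mathrm{d}s}\,\psi(B_t,\lambda)\Big].
\]
Now I would replace $\psi(B_t,\lambda)$ by $\psi(B_t,\lambda-\theta)$: each $\psi(\cdot,\mu)$ is continuous and $1$-periodic, hence bounded above and below by positive constants, so there exist $0<c_1(\lambda,\theta)\le c_2(\lambda,\theta)<\infty$ with $c_1\psi(\cdot,\lambda-\theta)\le\psi(\cdot,\lambda)\le c_2\psi(\cdot,\lambda-\theta)$ uniformly on $\R$. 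Applying the Feynman--Kac identity \eqref{psi_feynman_Kac} with $\lambda$ replaced by $\lambda-\theta$ gives
\[
\Pi_x\Big[e^{-(\lambda-\theta)B_t+m\int_0^t\mathbf g(B_s)\mathrm{d}s}\,\psi(B_t,\lambda-\theta)\Big]=e^{\gamma(\lambda-\theta)t-(\lambda-\theta)x}\,\psi(x,\lambda-\theta),
\]
so that $\Pi_x^{\lambda}[e^{\theta B_t}]$ equals $e^{(\gamma(\lambda-\theta)-\gamma(\lambda))t}$ up to multiplicative constants depending only on $x,\lambda,\theta$. Taking $\frac1t\log$ and letting $t\to\infty$ yields $\Lambda(\theta)=\gamma(\lambda-\theta)-\gamma(\lambda)$, which by the analyticity of $\gamma$ (Lemma \ref{lemma_gamma}) is finite and $C^\infty$ on all of $\R$.

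Since $\mathcal D_\Lambda=\R$, the function $\Lambda$ is lower semicontinuous and essentially smooth (steepness is vacuous, as there is no boundary), so the Gärtner--Ellis theorem applies: $\{\mu_t\}$ satisfies a large deviation principle with good rate function $\Lambda^*(z)=\sup_{\theta\in\R}\{\theta z-\Lambda(\theta)\}$. It remains to check $\Lambda^*=I$. Substituting $\eta=\lambda-\theta$,
\[
\Lambda^*(z)=\sup_{\eta\in\R}\big\{(\lambda-\eta)z-\gamma(\eta)+\gamma(\lambda)\big\}=\lambda z+\gamma(\lambda)+\sup_{\eta\in\R}\{-\eta z-\gamma(\eta)\},
\]
and because $\gamma$ is even (Lemma \ref{lemma_gamma}) the last supremum equals $\sup_{\eta}\{\eta z-\gamma(\eta)\}=\gamma^*(z)$. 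Hence $\Lambda^*(z)=\gamma^*(z)+\lambda z+\gamma(\lambda)=I(z)$, as claimed. The only genuine subtlety is the comparison step passing from $\psi(B_t,\lambda)$ to $\psi(B_t,\lambda-\theta)$ inside the expectation; once that is in place, the argument is a routine application of Gärtner--Ellis together with identities already established.
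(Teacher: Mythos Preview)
Your proposal is correct and follows essentially the same route as the paper: unwind the measure change \eqref{meas_Plambda}, use boundedness of $\psi(\cdot,\mu)$ to identify the limiting scaled cumulant generating function as $\gamma(\lambda-\theta)-\gamma(\lambda)$, and then apply G\"artner--Ellis together with the evenness of $\gamma$ to obtain $I$. The only cosmetic difference is that the paper absorbs the $\psi$ factor into an $O(1)$ term (recording $\Pi_x[e^{-\lambda B_t+m\int_0^t\mathbf g(B_s)\mathrm{d}s}]=e^{-\lambda x+\gamma(\lambda)t+O(1)}$), whereas you explicitly compare $\psi(\cdot,\lambda)$ to $\psi(\cdot,\lambda-\theta)$ and then invoke the exact Feynman--Kac identity \eqref{psi_feynman_Kac}.
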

\begin{proof}
For any $x\in\R$, $\lambda>0$, $t>0$ and any measurable function $F:C[0,t]\rightarrow\R$, we have
\begin{equation}\label{Yequ1}
\Pi_x^{\lambda}\left[ F(\{Y_s \}_{s\leq t})  \right] = \Pi_x \bigg{[} \frac{\psi(B_t,\lambda)}{\psi(x,\lambda)} e^{-\gamma(\lambda)t-\lambda (B_t-x) + m\int_0^t\mathbf g(B_s)\mathrm{d}s} F(\{B_s \}_{s\leq t})  \bigg{]}.
\end{equation}
Since $\psi$ is strictly positive and bounded, setting $F\equiv 1$ in the previous equation, we get that	\begin{equation}\label{Bequ2}
\Pi_x\left[ e^{-\lambda B_t + m\int_0^t\mathbf g(B_s)\mathrm{d}s}   \right] = e^{-\lambda x + \gamma(\lambda)t + O(1)},	
\end{equation}
where for fixed $\lambda$, $O(1)$ is uniformly bounded.
	
By \eqref{Yequ1}, for any $\eta\in\R$,
\begin{align*}
\frac{1}{t} \log \Pi_x^{\lambda}[e^{\eta Y_t}] &= \frac{1}{t} \log \Pi_x \bigg{[} \frac{\psi(B_t,\lambda)}{\psi(x,\lambda)} e^{-\gamma(\lambda)t-\lambda (B_t-x) + m\int_0^t\mathbf g(B_s)\mathrm{d}s}  e^{\eta B_t} \bigg{]}\\
&= \frac{1}{t} \log \Pi_x \bigg{[} \frac{\psi(B_t,\lambda)}{\psi(x,\lambda)} e^{-(\lambda-\eta) B_t + m\int_0^t\mathbf g(B_s)\mathrm{d}s}   \bigg{]} + \frac{\lambda x}{t}- \gamma(\lambda).
\end{align*}
It follows from \eqref{Bequ2} that as $t\to\infty$,
\begin{equation}
\lim_{t\rightarrow\infty} \frac{1}{t} \log \Pi_x^{\lambda}[e^{\eta Y_t}] = \gamma(\lambda-\eta) - \gamma(\lambda).
\end{equation}
The Fenchel-Legendre transform of $\eta\mapsto\gamma(\lambda-\eta)-\gamma(\lambda)$ is given by
\begin{align}\label{I(z)}
I(z) :&= \sup_{\eta\in\R} \big{\{} \eta z - [\gamma(\lambda-\eta)-\gamma(\lambda)]\big{\}}\\
&= \sup_{\eta\in\R} \big{\{} (\eta-\lambda) z - \gamma(\lambda-\eta)]\big{\}} + \lambda z  + \gamma(\lambda) \notag \\
&= \gamma^*(z) + \{\lambda z + \gamma(\lambda) \}, \notag
\end{align}
where the last equality follows from the fact that $\gamma$ is an even function.
Note that $\gamma(\lambda-\eta) - \gamma(\lambda)$ is
differentiable with respect to $\eta$ by Lemma \ref{lemma_gamma}.
By the G\"{a}rtner-Ellis theorem (for example, see \cite[\S 2.3]{DZ}),  $\{\mu_t \}$ satisfies a large deviation principle with good rate function $I(z)$.
\end{proof}

\begin{lemma}\label{lemma_slln}
For any $x\in\R$, it holds that $\frac{Y_t}{t} \rightarrow -\gamma'(\lambda)$ $\Pi_x^{\lambda}$-almost surely.
\end{lemma}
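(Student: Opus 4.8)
The plan is to deduce the almost sure convergence from the large deviation principle established in Lemma~\ref{lemma_ldp}, combining the large deviation upper bound with a Borel--Cantelli argument along integer times and an oscillation estimate for the diffusion between consecutive integers. (This also reproves, in a self-contained way, the extension of \cite[Lemma 2.6, Corollary 2.7]{LTZ} alluded to above.)

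First I would locate the unique zero of the good rate function $I(z)=\gamma^*(z)+\{\lambda z+\gamma(\lambda)\}$. Since $\gamma$ is analytic, strictly convex, and $\gamma'\colon\R\to\R$ is a continuous bijection (Lemma~\ref{lemma_gamma} and \eqref{gamma'}), its Legendre transform $\gamma^*$ is of class $C^1$ and strictly convex with $(\gamma^*)'=(\gamma')^{-1}$; hence $I$ is strictly convex and $I'(z)=0$ holds exactly when $(\gamma')^{-1}(z)=-\lambda$, i.e. $z=\gamma'(-\lambda)=-\gamma'(\lambda)$, using that $\gamma$ is even so $\gamma'$ is odd. A one-line computation with $\gamma^*(-\gamma'(\lambda))=\lambda\gamma'(\lambda)-\gamma(-\lambda)$ and $\gamma(-\lambda)=\gamma(\lambda)$ gives $I(-\gamma'(\lambda))=0$. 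Thus $z^*:=-\gamma'(\lambda)$ is the unique minimizer of $I$, $I\ge 0$, and by convexity $\inf_{|z-z^*|\ge\varepsilon}I(z)=\min\{I(z^*-\varepsilon),I(z^*+\varepsilon)\}>0$ for every $\varepsilon>0$.

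Next, fix $\varepsilon>0$. Applying the large deviation upper bound of Lemma~\ref{lemma_ldp} to the closed set $F_\varepsilon:=\{z\in\R:|z-z^*|\ge\varepsilon\}$ yields $\limsup_{t\to\infty}\tfrac1t\log\Pi_x^{\lambda}\big(|Y_t/t-z^*|\ge\varepsilon\big)\le-\inf_{F_\varepsilon}I=:-c_\varepsilon<0$, so $\Pi_x^{\lambda}(|Y_n/n-z^*|\ge\varepsilon)\le e^{-c_\varepsilon n/2}$ for all large integers $n$; this is summable, so Borel--Cantelli gives $Y_n/n\to z^*$ $\Pi_x^{\lambda}$-a.s. along the integers. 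To upgrade to all $t$, recall from \eqref{dY}--\eqref{Y_infin} that under $\Pi_x^{\lambda}$, $Y_t=x+\int_0^t\big(\tfrac{\psi_x}{\psi}(Y_s,\lambda)-\lambda\big)\,\mathrm{d}s+\widehat B_t$ with $\widehat B$ a Brownian motion and drift bounded in absolute value by a finite constant $M$ (by $1$-periodicity of $\psi$). Hence for $t\in[n,n+1]$, $|Y_t-Y_n|\le M+\max_{0\le s\le 1}|\widehat B_{n+s}-\widehat B_n|$, and the reflection principle gives $\Pi_x^{\lambda}(\max_{0\le s\le 1}|\widehat B_{n+s}-\widehat B_n|\ge\varepsilon n)\le 4e^{-\varepsilon^2 n^2/2}$, which is summable, so $\tfrac1n\max_{0\le s\le 1}|\widehat B_{n+s}-\widehat B_n|\to0$ $\Pi_x^{\lambda}$-a.s. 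Combining, for $t\in[n,n+1]$,
\[
\Big|\frac{Y_t}{t}-z^*\Big|\le\frac{|Y_t-Y_n|}{n}+|Y_n|\Big(\frac1n-\frac1t\Big)+\Big|\frac{Y_n}{n}-z^*\Big|,
\]
where the first term tends to $0$ by the oscillation estimate and the bound on the drift, the middle term is at most $(|Y_n|/n)/n\to0$ since $|Y_n|/n$ is a.s. bounded, and the last term is eventually $<\varepsilon$; letting $\varepsilon\downarrow0$ along a countable sequence completes the proof. Since Lemma~\ref{lemma_ldp} carries the real weight, the only genuinely delicate point is the passage from integer to real times, which the explicit diffusion representation with bounded drift handles cleanly.
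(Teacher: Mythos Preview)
Your proof is correct and follows the same approach as the paper: use the LDP of Lemma~\ref{lemma_ldp} to identify $-\gamma'(\lambda)$ as the unique zero of the rate function and obtain exponential decay, then combine Borel--Cantelli along integer times with an oscillation bound on $[n,n+1]$. The only difference is in the oscillation step, where the paper changes measure back to $\Pi_x$ via \eqref{Yequ1} and exploits the $1$-periodicity in $x$ of the resulting probability, whereas you (somewhat more directly) decompose $Y_t-Y_n$ via the SDE \eqref{dY} into a bounded-drift term and a Brownian increment controlled by the reflection principle.
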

\begin{proof}
By Lemma \ref{lemma_gamma} (1),  $\gamma'(\lambda)$ is strictly increasing. Next, note that $I(z) =\sup_{\eta\in\R} \big{\{} \eta z - [\gamma(\lambda-\eta)-\gamma(\lambda)]\big{\}}$ and that the
derivative of $\eta z - \gamma(\lambda-\eta)$ with respect to $\eta$ is
$z+\gamma'(\lambda-\eta)$. For fixed $z$, define $\eta_z$ such that $\gamma'(\lambda-\eta_z) = -z$.
Here the existence of $\eta_z$ is guaranteed by \eqref{gamma'}. By the convexity of $\gamma$, $\eta_z$ is an increasing function of $z$. Moreover, because $\gamma'$ is strictly increasing, we have
\begin{equation}\label{I(z)anoth}
	I(z) = -\eta_z \gamma'(\lambda-\eta_z) - [\gamma(\lambda-\eta_z)-\gamma(\lambda)],
\end{equation}
where $I(z)$ is equal to $0$ when $\eta_z=0$ or
equivalently $z=-\gamma'(\lambda)$. Thus
$\frac{\mathrm{d}I(z)}{\mathrm{d}\eta_z} = \eta_z \gamma''(\lambda-\eta_z)\geq 0$
(respectively $\frac{\mathrm{d}I(z)}{\mathrm{d}\eta_z}\leq 0$) when $\eta_z>0$ (respectively $\eta_z<0$). Using \eqref{I(z)anoth} and the fact that $\gamma'$ is strictly increasing, we have for any $\epsilon>0$,
\begin{equation}
	\delta := \inf\{I(z): |z+\gamma'(\lambda)|\geq \epsilon \} = I(-\gamma'(\lambda)-\epsilon) \wedge I(-\gamma'(\lambda)+\epsilon) > 0.
\end{equation}

Applying the large deviation principle of $\{\mu_t \}$, we get
\begin{equation}
\Pi_x^{\lambda}\left( \left|\frac{Y_t}{t}+\gamma'(\lambda)\right| > \epsilon \right) \leq Ce^{-\delta t/2}.
\end{equation}
By \eqref{Yequ1}, there is a constant $C_1(\lambda)>0$ such that
\begin{align*}
	P(x,T) :&= \Pi_x^{\lambda} \left( \max_{t\in[0,1]} |Y_t-Y_0+\gamma'(\lambda)t| > T\epsilon \right) \\
	&= \Pi_x \left[\frac{\psi(B_1,\lambda)}{\psi(x,\lambda)} e^{-\gamma(\lambda)-\lambda (B_1-x) + m\int_0^1\mathbf g(B_s)\mathrm{d}s} \mathbf{1}_{\{\max_{t\in[0,1]} |B_t-x+\gamma'(\lambda)t| > T\epsilon\}}  \right]\\
	&\leq C_1(\lambda)\Pi_x \left[ e^{-\lambda (B_1-x)} \mathbf{1}_{\{\max_{t\in[0,1]} |B_t-x+\gamma'(\lambda)t| > T\epsilon\}}  \right]\\
	&\leq C_1 (\lambda)\Pi_0\left[ e^{\lambda B_1^*}
	\mathbf{1}_{\{B_1^* > T\epsilon - |\gamma'(\lambda)|\}} \right],
	\quad x\in[0,1],
\end{align*}	
where $B_1^*= \max_{t\in[0,1]}\{|B_t|\}$. Since, under $\Pi_0$, $B_1^*$
has the same distribution as $|B_1|$ and  $\int_{x}^{\infty} e^{-\frac{y^2}{2}} \mathrm{d}y \leq \frac{1}{x} e^{-\frac{x^2}{2}}$ for $x>0$, there is a constant $C_2(\lambda)>0$ such that
\begin{equation}
    P(x,T)\leq C_2(\lambda) e^{-\delta T/2},\quad x\in[0,1].
\end{equation}
Thus, for any $n\in\N$ and $x\in\R$, one has
\begin{align*}
	&\Pi_x^{\lambda} \bigg{(} \bigcup_{t\in [n,n+1]} \{|Y_t+\gamma'(\lambda)t| > 2t\epsilon \}  \bigg{)}\\
	\leq &\Pi_x^{\lambda} \left( |Y_n+\gamma'(\lambda)n| > n\epsilon \right) + \Pi_x^{\lambda} \left( \max_{t\in[0,1]} |Y_{n+t}-Y_n+\gamma'(\lambda)t| > n\epsilon \right) \\
	\leq &Ce^{-n\delta/2} + \Pi_x^{\lambda} \left[ P(Y_n,n) \right]
    \leq Ce^{-n\delta/2},
\end{align*}
where the last inequality follows from the fact that $P(x,T)$ is 1-periodic in $x$.
Since $\epsilon>0$ is arbitrary, by the Borel-Cantelli lemma, we obtain
 $\frac{Y_t}{t} \rightarrow -\gamma'(\lambda)$ $\Pi_x^{\lambda}$-almost surely.
\end{proof}

\begin{lemma}\label{mart_deri_BM}
Suppose $\{B_t, t\geq 0; \Pi_x\}$ is a Brownian motion starting from $x\in \R$.
Define
\begin{equation}\label{mart_St}
\Upsilon_t(\lambda):
= e^{-\gamma(\lambda)t-\lambda B_t + m\int_0^t\mathbf g(B_s)\mathrm{d}s} \left( \psi(B_t,\lambda)(\gamma'(\lambda)t+B_t) - \psi_{\lambda}(B_t,\lambda) \right).
\end{equation}
Then $\{\Upsilon_t(\lambda),  t\geq 0\}$ is a $\Pi_x$-martingale.
\end{lemma}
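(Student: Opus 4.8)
The plan is to recognize $\Upsilon_t(\lambda)$ as (minus) the $\lambda$-derivative of the martingale $\Xi_t(\lambda)$ of Lemma \ref{lemma_Yt}, and to transfer the martingale property by differentiating the martingale identity under the conditional expectation. Comparing \eqref{mart_eta-t} with \eqref{mart_St}, one checks directly that $\Upsilon_t(\lambda) = -\partial_\lambda \Xi_t(\lambda)$; the differentiation in $\lambda$ is legitimate since $\gamma$ is analytic (Lemma \ref{lemma_gamma}) and $\psi(x,\cdot)$, $\psi_\lambda(x,\cdot)$ exist and are continuous on $\R\setminus\{0\}$ (Lemmas \ref{tildepsi} and \ref{lemma_cpsi}); throughout we take $\lambda\neq 0$, which is all that is needed, and the case $\lambda<0$ follows by symmetry. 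Lemma \ref{lemma_Yt} gives $\Pi_x[\Xi_t(\lambda)\mid\mathcal{F}_s^B]=\Xi_s(\lambda)$ for $0\le s\le t$, and it then remains to push $\partial_\lambda$ through the conditional expectation on the left, which yields $\Pi_x[\Upsilon_t(\lambda)\mid\mathcal{F}_s^B] = -\partial_\lambda\Xi_s(\lambda) = \Upsilon_s(\lambda)$, i.e.\ the martingale property.

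The substantive step is justifying this interchange, which I would do with the conditional dominated convergence theorem. Fix $\lambda_0>0$ and work on the interval $[\lambda_0/2,3\lambda_0/2]$. Applying the mean value theorem pathwise to the difference quotient $h^{-1}\bigl(\Xi_t(\lambda_0+h)-\Xi_t(\lambda_0)\bigr)$ writes it as $-\Upsilon_t(\lambda_0+\theta h)$ for some $\theta=\theta(\omega)\in(0,1)$, so it suffices to dominate $|\Upsilon_t(\lambda')|$, uniformly for $\lambda'$ in that interval, by a $\Pi_x$-integrable random variable. Since $\alpha\le\mathbf g\le\beta$, since $\psi$ and $\psi_\lambda$ are bounded there (periodicity together with the bounds obtained in the proof of Lemma \ref{tildepsi}, and positivity and continuity of the normalizing factor $\widetilde c(\lambda)$), and since $\gamma,\gamma'$ are continuous, one obtains a bound of the form $|\Upsilon_t(\lambda')| \le C\,(1+t+|B_t|)\bigl(e^{-(\lambda_0/2)B_t}+e^{-(3\lambda_0/2)B_t}\bigr)e^{m\beta t}$, whose $\Pi_x$-expectation is finite by the Gaussian moment formula. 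Conditional dominated convergence validates the interchange, and the same bound (now at $\lambda$ itself) shows $\Upsilon_t(\lambda)\in L^1(\Pi_x)$, completing the argument.

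As an alternative, I would give a direct It\^o proof: write $\Upsilon_t(\lambda)=\Xi_t(\lambda)R_t$ with $R_t := \gamma'(\lambda)t + B_t - \psi_\lambda(B_t,\lambda)/\psi(B_t,\lambda)$ (legitimate since $\psi>0$), use $\mathrm d\Xi_t=\Xi_t\bigl(\tfrac{\psi_x}{\psi}-\lambda\bigr)\mathrm dB_t$ from \eqref{mart_YGir}, and apply It\^o's product rule. A short computation shows the finite-variation part of $\mathrm d\Upsilon_t$ equals $\Xi_t\bigl[\gamma'(\lambda)-\tfrac{1}{2}h''+(\tfrac{\psi_x}{\psi}-\lambda)(1-h')\bigr]\mathrm dt$ (evaluated at $B_t$), where $h=\psi_\lambda/\psi$; multiplying the bracket by $\psi$ and eliminating $\tfrac{1}{2}\psi_{xx}$ via the eigenvalue equation \eqref{eigen}, the expression collapses, after cancellations, to zero by virtue of \eqref{eigen_diff}. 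This identifies $\Upsilon_t(\lambda)$ as a continuous local martingale; to upgrade it to a true martingale one observes that the integrand in its stochastic-integral representation is bounded by $C\,\Xi_s(1+s+|B_s|)$, whose square is $\Pi_x$-integrable over $[0,t]$ by Fubini together with Gaussian moments. In either route the only non-routine point is the same — the algebraic collapse encoded by \eqref{eigen} and \eqref{eigen_diff} in the second approach, the uniform $L^1$-domination of $\partial_\lambda\Xi_t$ in the first — and each is straightforward once one exploits the boundedness of $\mathbf g$, $\psi$, and $\psi_\lambda$.
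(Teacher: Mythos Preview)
Your proposal is correct, and both routes you outline work. The paper's own proof is essentially your second (alternative) approach: a direct It\^o computation of $\mathrm d\Upsilon_t(\lambda)$ (the paper writes it as $\mathrm d[J_t\cdot(\psi(B_t,\lambda)(\gamma'(\lambda)t+B_t)-\psi_\lambda(B_t,\lambda))]$ rather than factoring as $\Xi_t R_t$, but the algebra is the same), shows the drift vanishes by \eqref{eigen} and \eqref{eigen_diff}, and then upgrades the local martingale to a true one by observing that $\psi$, $\psi_\lambda$, $\psi_{\lambda x}$ are $1$-periodic in $x$ and hence bounded.

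Your primary approach---recognizing $\Upsilon_t(\lambda)=-\partial_\lambda\Xi_t(\lambda)$ and differentiating the martingale identity of Lemma~\ref{lemma_Yt} under the conditional expectation---is a genuinely different route. It trades the It\^o calculus and the explicit algebraic cancellation for a single uniform-in-$\lambda$ domination estimate. What this buys you is that the martingale property is inherited directly from Lemma~\ref{lemma_Yt} without ever invoking \eqref{eigen_diff} in a computation; the cost is that you need $\psi$ and $\psi_\lambda$ bounded uniformly over $(x,\lambda')\in\R\times[\lambda_0/2,3\lambda_0/2]$, which you correctly extract from the estimates in the proof of Lemma~\ref{tildepsi} and the continuity of the normalizing constant. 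The paper's It\^o route is slightly more self-contained (no compactness-in-$\lambda$ argument needed) and makes the role of the differentiated eigenvalue equation \eqref{eigen_diff} transparent, whereas your differentiation route is conceptually cleaner and parallels the way the derivative martingale $\partial W_t(\lambda)$ itself is defined in \eqref{mart_deriv}.
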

\begin{proof} For convenience,
put $J_t:= e^{-\gamma(\lambda)t-\lambda B_t + m\int_0^t \mathbf g(B_s)\mathrm{d}s}$.
A straightforward computation using It\^{o}'s formula yields
$$
\mathrm{d}J_t = -\lambda J_t \mathrm{d}B_t +\left(\frac{1}{2}\lambda^2 - \gamma(\lambda) + m\mathbf g(B_t)\right) J_t \mathrm{d}t,
$$
\begin{align*}
&\mathrm{d}\left(\psi(B_t,\lambda)(\gamma'(\lambda)t+B_t) - \psi_{\lambda}(B_t,\lambda) \right)\\
=& \left[  \psi_x(B_t,\lambda)(\gamma'(\lambda)t+B_t) + \psi(B_t,\lambda) -\psi_{\lambda x} (B_t,\lambda) \right] \mathrm{d}B_t \\
& +\left[\frac{1}{2}\psi_{xx}(B_t,\lambda)(\gamma'(\lambda)t+B_t) + \psi(B_t,\lambda)\gamma'(\lambda) + \psi_x(B_t,\lambda) - \frac{1}{2}\psi_{\lambda xx}(B_t,\lambda)  \right] \mathrm{d}t,
\end{align*}
and
\begin{align*}
&\mathrm{d}\left[J_t (\psi(B_t,\lambda)(\gamma'(\lambda)t+B_t) - \psi_{\lambda}(B_t,\lambda) \right] \\
= &J_t \left[(\psi_x - \lambda\psi)(B_t,\lambda)(\gamma'(\lambda)t+B_t) + \lambda\psi_{\lambda} (B_t,\lambda)+ \psi(B_t,\lambda) - \psi_{x\lambda}(B_t,\lambda)\right] \mathrm{d}B_t\\
& + J_t(\gamma'(\lambda)t+B_t) \left( \frac{1}{2} \psi_{xx} - \lambda \psi_x + \left(\frac{1}{2}\lambda^2-\gamma(\lambda)+m\mathbf g(B_t)\right)\psi  \right) (B_t,\lambda)\mathrm{d}t\\
& - J_t \left( \frac{1}{2} \psi_{\lambda xx} - \psi_x - \lambda \psi_{\lambda x} + \left(\frac{1}{2}\lambda^2 -\gamma(\lambda) + m\mathbf g(B_t)\right)\psi_{\lambda} + \lambda\psi - \gamma'(\lambda) \psi \right) (B_t,\lambda)\mathrm{d}t\\
= &J_t \left[ (\gamma'(\lambda)t+B_t)(\psi_x-\lambda\psi) + \lambda\psi_{\lambda} + \psi - \psi_{\lambda x} \right](B_t,\lambda) \mathrm{d}B_t,
\end{align*}
where in the last equality we used \eqref{eigen} and \eqref{eigen_diff}.
Note that $\psi$, $\psi_{\lambda}$ and $\psi_{\lambda x}$ are 1-periodic in $x$, so they are bounded for fixed $\lambda\in\R$. Therefore,
$\{\Upsilon_t(\lambda), t\geq 0\}$ is a $\Pi_x$-martingale.
\end{proof}

The martingale
$\{(\Upsilon_t(\lambda))_{t\geq 0}, \Pi_x\}$
may take negative values.
Now we introduce a related non-negative martingale.
Before giving its definition, we first give some  properties of the function $h$ defined by
\begin{equation}\label{def-h}
h(x):= x - \frac{\psi_{\lambda}(x,\lambda)}{\psi(x,\lambda)}.
\end{equation}
Clearly $h(x)$ is continuous and satisfies $h(x+1)=h(x)+1$.
Recall that we always suppose $\lambda>0$ unless explicitly stated otherwise.
Since $\phi(x, \lambda) = e^{-\lambda x} \psi(x,\lambda)$, we have
\begin{equation}\label{def-h2}
h(x)=-\frac{\phi_{\lambda}(x,\lambda)}{\phi(x,\lambda)}.
\end{equation}
It is easy to see that $h'$ is 1-periodic and continuous.
Thus, $h'$ is bounded.

\begin{lemma}\label{lemma_h_incre}
$h'$ is strictly positive.
\end{lemma}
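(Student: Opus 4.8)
I would work with $\phi(x,\lambda)=e^{-\lambda x}\psi(x,\lambda)$ and the representation \eqref{def-h2}, $h(x)=-\phi_\lambda(x,\lambda)/\phi(x,\lambda)$. Introduce the Wronskian-type quantity of the pair $(\phi,\phi_\lambda)$,
\[
w(x):=\phi_{\lambda x}(x,\lambda)\,\phi(x,\lambda)-\phi_\lambda(x,\lambda)\,\phi_x(x,\lambda).
\]
Differentiating $h$ gives $h'(x)=-w(x)/\phi(x,\lambda)^2$, and since $\phi>0$ it suffices to prove that $w(x)<0$ for every $x\in\R$. (The regularity needed for all this, namely $\phi_\lambda(\cdot,\lambda)\in C^2(\R)$, was established in the proofs of Lemma \ref{tildepsi} and Lemma \ref{lemma_cpsi}.)

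First I would compute $w'$. Using $w'=\phi_{\lambda xx}\phi-\phi_\lambda\phi_{xx}$ together with the ODE \eqref{phi-equ} for $\phi$ and the ODE \eqref{diff-phi} for $\phi_\lambda$, the terms containing the factor $\gamma(\lambda)-m\mathbf g(x)$ cancel, leaving
\[
w'(x)=2\gamma'(\lambda)\,\phi(x,\lambda)^2 .
\]
Since we are in the case $\lambda>0$ and $\gamma$ is even, analytic and strictly convex (Lemma \ref{lemma_gamma}), we have $\gamma'(0)=0$ and $\gamma'$ strictly increasing, hence $\gamma'(\lambda)>0$. Therefore $w'>0$, i.e. $w$ is strictly increasing on $\R$.

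To convert monotonicity into a sign, I would exploit quasi-periodicity. From $\psi(x+1,\lambda)=\psi(x,\lambda)$ one gets $\phi(x+1,\lambda)=e^{-\lambda}\phi(x,\lambda)$; differentiating this identity with respect to $\lambda$ and with respect to $x$ yields $\phi_\lambda(x+1,\lambda)=e^{-\lambda}\bigl(\phi_\lambda(x,\lambda)-\phi(x,\lambda)\bigr)$ and $\phi_{\lambda x}(x+1,\lambda)=e^{-\lambda}\bigl(\phi_{\lambda x}(x,\lambda)-\phi_x(x,\lambda)\bigr)$. Substituting these into the definition of $w$, the cross terms cancel and we obtain
\[
w(x+1)=e^{-2\lambda}\,w(x),\qquad x\in\R .
\]
Now suppose $w(x_0)\ge 0$ for some $x_0$. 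Strict monotonicity forces $w(x_0+1)>w(x_0)\ge 0$, while the identity above gives $w(x_0+1)=e^{-2\lambda}w(x_0)$; combining these, $e^{-2\lambda}w(x_0)>w(x_0)\ge 0$, which is impossible since $0<e^{-2\lambda}<1$. Hence $w(x)<0$ for all $x\in\R$, and therefore $h'(x)=-w(x)/\phi(x,\lambda)^2>0$.

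The calculations here are short and routine; the step I expect to be the crux is recognizing that one should pass to $\phi$ and its Wronskian, and then that the quasi-periodicity relation $w(x+1)=e^{-2\lambda}w(x)$ with $0<e^{-2\lambda}<1$ is precisely what pins the strictly increasing function $w$ to the negative half-line. (Note that $1$-periodicity of $h'$ by itself only records that its average over a period equals $h(1)-h(0)=1$, which is not enough to conclude pointwise positivity.)
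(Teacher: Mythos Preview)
Your proof is correct and takes a genuinely different route from the paper's.

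The paper proceeds probabilistically: it represents $\phi(x,\lambda)$ via a Feynman--Kac formula with a one-sided hitting time $\tau_y$, differentiates $\log\phi$ in $\lambda$ to obtain
\[
h(x)-h(y)=\frac{\gamma'(\lambda)\,\Pi_x\!\left[\tau_y\,e^{\int_0^{\tau_y}(m\mathbf g(B_t)-\gamma(\lambda))\,\mathrm dt}\right]}{\Pi_x\!\left[e^{\int_0^{\tau_y}(m\mathbf g(B_t)-\gamma(\lambda))\,\mathrm dt}\right]},
\]
and then bounds the numerator from below using the explicit Laplace transform of $\tau_y$ and the bound $\gamma(\lambda)>m\alpha$, finally letting $y\uparrow x$ to extract a strictly positive lower bound on $h'(x)$.

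Your argument is purely analytic and substantially shorter: the single identity $w'=2\gamma'(\lambda)\phi^2>0$ together with the multiplicative quasi-periodicity $w(x+1)=e^{-2\lambda}w(x)$ forces $w<0$ in one line. What the paper's approach buys in exchange for its length is an explicit quantitative lower bound for $h'$ (namely $h'(x)\ge \gamma'(\lambda)/\bigl(C\sqrt{2(\gamma(\lambda)-m\alpha)}\bigr)$), though the paper does not actually use this constant later---it only needs $h'$ bounded away from zero, which your argument also yields by continuity and periodicity of $h'$. Your Wronskian trick is a nice alternative and would fit well as a replacement or a remark.
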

\begin{proof}
Recall that $\phi(x, \lambda) = e^{-\lambda x} \psi(x,\lambda)$ satisfies
\begin{equation}
\frac{1}{2} \phi_{xx}(x,\lambda) + m\mathbf g(x) \phi(x,\lambda) = \gamma(\lambda) \phi(x,\lambda).
\end{equation}
For any $-\infty\leq y<z \leq \infty$, define
\begin{equation}
\tau_{(y,z)} := \inf\{t>0: B(t) \notin (y,z) \},
\end{equation}
and $\tau_y := \inf\{t>0: B(t) = y\}$.
By the probabilistic representation for the solution of the Schr\"{o}dinger equation, we have for any $x\in (y,z)$,
\begin{align*}
\phi(x,\lambda) &= \Pi_x  \left[ \phi(B_{\tau_{(y,z)}},\lambda) e^{\int_0^{\tau_{(y,z)}} (m\mathbf g(B_t)-\gamma(\lambda)) \mathrm{d}t} \right]\\
&= \Pi_x \left[  \phi(y,\lambda) \mathbf{1}_{\{\tau_y<\tau_z \}} e^{\int_0^{\tau_y} (m\mathbf g(B_t)-\gamma(\lambda)) \mathrm{d}t} \right] + \Pi_x \left[  \phi(z,\lambda) \mathbf{1}_{\{\tau_z<\tau_y \}} e^{\int_0^{\tau_z} (m\mathbf g(B_t)-\gamma(\lambda)) \mathrm{d}t} \right].
\end{align*}
By the monotone convergence theorem, the first term on the right-hand side of the last equation converges
to
$$\Pi_x \left[  \phi(y,\lambda) e^{\int_0^{\tau_y} (m\mathbf g(B_t)-\gamma(\lambda)) \mathrm{d}t} \right]
$$
as $z\rightarrow\infty.$
The second term of the last equation is equal to
$$
e^{-\lambda z} \Pi_x \left[  \psi(z,\lambda) \mathbf{1}_{\{\tau_z<\tau_y \}} e^{\int_0^{\tau_z} (m\mathbf g(B_t)-\gamma(\lambda)) \mathrm{d}t} \right],
$$
thus bounded by $Ce^{-\lambda z}$. So it converges to zero when $z\rightarrow\infty$. Therefore we have shown for any $x>y$,
\begin{equation}
\phi(x,\lambda) = \Pi_x \left[ \phi(y,\lambda) e^{\int_0^{\tau_y} (m\mathbf g(B_t)-\gamma(\lambda)) \mathrm{d}t}\right].
\end{equation}
Hence,
\begin{equation}
\ln\phi(x,\lambda) = \ln\phi(y,\lambda) + \ln \Pi_x \left[  e^{\int_0^{\tau_y} (m\mathbf g(B_t)-\gamma(\lambda)) \mathrm{d}t} \right].
\end{equation}
Differentiating both sides of the previous equation with respect to $\lambda$ gives
\begin{equation}\label{h_diff}
\frac{\phi_{\lambda}(x,\lambda)}{\phi(x,\lambda)} = \frac{\phi_{\lambda}(y,\lambda)}{\phi(y,\lambda)} - \frac{\Pi_x \left[ \gamma'(\lambda) \tau_y e^{\int_0^{\tau_y} (m\mathbf g(B_t)-\gamma(\lambda)) \mathrm{d}t} \right]}{\Pi_x \left[  e^{\int_0^{\tau_y} (m\mathbf g(B_t)-\gamma(\lambda)) \mathrm{d}t} \right]} < \frac{\phi_{\lambda}(y,\lambda)}{\phi(y,\lambda)}.
\end{equation}
In other words, $\phi_{\lambda}(\cdot,\lambda)/\phi(\cdot,\lambda)$ is strictly decreasing. Thus by \eqref{def-h2}, $h$ is strictly increasing.

By the proof of Lemma \ref{tildepsi}, we know the gauge function $g(\cdot,\lambda)$ is bounded. Therefore,
\begin{equation}
	C = \max_{y\in[x-1,x]} \Pi_x \left[  e^{\int_0^{\tau_y} (m\mathbf g(B_t)-\gamma(\lambda)) \mathrm{d}t}\right] < +\infty.
\end{equation}
It is well known, see \cite[Theorem 8.5.7]{Durrett} for example, that for any $b>0$,
\begin{equation}
\Pi_x [e^{-b\tau_y}] = e^{-(x-y)\sqrt{2b}}.
\end{equation}
Differentiating both sides of the previous equation with respect to $b$, we get
\begin{equation}
\Pi_x \left[\tau_y e^{-b\tau_y}\right] = \frac{x-y}{\sqrt{2b}} e^{-(x-y)\sqrt{2b}}.
\end{equation}
Recall that $0<\alpha \leq \min_{x\in [0,1]} \mathbf g(x)$ and \eqref{gamma_range} implies $\gamma(\lambda)>m\alpha$. Thus we have
\begin{align}
h(x)-h(y) &\geq  \Pi_x \left[ \gamma'(\lambda) \tau_y e^{\int_0^{\tau_y} (m\mathbf g(B_t)-\gamma(\lambda)) \mathrm{d}t} \right]/C\\
&\geq  \Pi_x \left[ \gamma'(\lambda) \tau_y e^{(m\alpha-\gamma(\lambda)) \tau_y} \right]/C\\
&= \frac{x-y}{C\sqrt{2(\gamma(\lambda)-m\alpha)}} \gamma'(\lambda) e^{-(x-y)\sqrt{2(\gamma(\lambda)-m\alpha)}}.
\end{align}
Therefore
\begin{align}
h'(x) = \lim_{y\rightarrow x} \frac{h(x)-h(y)}{x-y} \geq \frac{\gamma'(\lambda)} {C\sqrt{2(\gamma(\lambda)-m\alpha)}} > 0.
\end{align}
This completes the proof.
\end{proof}

For any $x\in\R$, define
\begin{equation}\label{def_tauB}
\tau^x_{\lambda}:= \inf\left\{t\geq 0: \; h(B_t) \leq - x - \gamma'(\lambda) t  \right\}.
\end{equation}
Since  $h(x)$ is strictly increasing, for any $x\in\R$,  we may rewrite the definition of $\tau_{\lambda}^x$ as
\begin{align*}
\tau^x_{\lambda} = \inf\left\{t\geq 0: B_t \leq h^{-1}(-x-\gamma'(\lambda)t) \right\}.
\end{align*}
Hence, $\tau^x_{\lambda}$ is an $\{\F_t^B \}$-stopping time.

Define
\begin{equation}\label{mart_Lambda}
\Lambda_t^{(x,\lambda)}:= e^{-\gamma(\lambda)t - \lambda B_t + m\int_0^t\mathbf g(B_s)\mathrm{d}s} \psi(B_t,\lambda)
\left( x+\gamma'(\lambda)t + h(B_t) \right)
\textbf{1}_{\{\tau^x_{\lambda} > t \}}.
\end{equation}
$\Lambda_t^{(x,\lambda)}$ is nonnegative.
We now prove that $\{\Lambda_t^{(x,\lambda)},t\geq 0\}$ is a martingale.

\begin{lemma}
For any $x, y\in \R$ with $y>h^{-1}(-x)$, $\{\Lambda_t^{(x,\lambda)},t\geq 0\}$ is a $\Pi_y$-martingale.
\end{lemma}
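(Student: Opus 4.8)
The plan is to recognize $\{\Lambda_t^{(x,\lambda)}\}$ as a \emph{stopped} version of an explicit $\Pi_y$-martingale assembled from the two martingales already produced in Lemmas \ref{lemma_Yt} and \ref{mart_deri_BM}. Set
\begin{equation*}
M_t := x\,\Xi_t(\lambda) + \Upsilon_t(\lambda),
\end{equation*}
where $\Xi_t(\lambda)$ and $\Upsilon_t(\lambda)$ are as in \eqref{mart_eta-t} and \eqref{mart_St}, now with the driving Brownian motion started from $y$. Using the defining identity $h(z)\psi(z,\lambda)=z\psi(z,\lambda)-\psi_\lambda(z,\lambda)$ from \eqref{def-h}, evaluated at $z=B_t$ and multiplied by $e^{-\gamma(\lambda)t-\lambda B_t+m\int_0^t\mathbf g(B_s)\mathrm{d}s}$, one gets
\begin{equation*}
M_t = e^{-\gamma(\lambda)t-\lambda B_t+m\int_0^t\mathbf g(B_s)\mathrm{d}s}\,\psi(B_t,\lambda)(x+\gamma'(\lambda)t+h(B_t)),
\end{equation*}
so that, comparing with \eqref{mart_Lambda}, $\Lambda_t^{(x,\lambda)}=M_t\,\mathbf{1}_{\{\tau_\lambda^x>t\}}$.

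The steps would then be the following. First, $M=\{M_t\}$ is a $\Pi_y$-martingale, being a fixed linear combination of the $\Pi_y$-martingales $\{\Xi_t(\lambda)\}$ and $\{\Upsilon_t(\lambda)\}$ (Lemmas \ref{lemma_Yt} and \ref{mart_deri_BM}), with continuous paths since $t\mapsto B_t$, $\psi$ and $\psi_\lambda$ are continuous. Second, I would establish the pathwise identity $\Lambda_t^{(x,\lambda)}=M_{t\wedge\tau_\lambda^x}$: on $\{\tau_\lambda^x>t\}$ it is immediate since there $t\wedge\tau_\lambda^x=t$; on $\{\tau_\lambda^x\le t\}$ one has $\Lambda_t^{(x,\lambda)}=0$ and $M_{t\wedge\tau_\lambda^x}=M_{\tau_\lambda^x}$, so it suffices to check $M_{\tau_\lambda^x}=0$ on $\{\tau_\lambda^x<\infty\}$. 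Here is where the hypothesis $y>h^{-1}(-x)$ is used: since $h$ is continuous and strictly increasing (Lemma \ref{lemma_h_incre}), $h(B_0)=h(y)>-x$, so the continuous process $s\mapsto x+\gamma'(\lambda)s+h(B_s)$ is strictly positive at $s=0$, hence at $\tau_\lambda^x$ — its first passage below $0$ — it in fact equals $0$, i.e. $x+\gamma'(\lambda)\tau_\lambda^x+h(B_{\tau_\lambda^x})=0$; the displayed formula for $M$ then forces $M_{\tau_\lambda^x}=0$. Third, $\tau_\lambda^x$ is an $\{\F_t^B\}$-stopping time (noted just after \eqref{def_tauB}), so the stopped process $\{M_{t\wedge\tau_\lambda^x}\}=\{\Lambda_t^{(x,\lambda)}\}$ is again a $\Pi_y$-martingale; integrability of $\Lambda_t^{(x,\lambda)}$ follows from $\E_{\Pi_y}|M_{t\wedge\tau_\lambda^x}|\le\E_{\Pi_y}|M_t|<\infty$ (optional stopping for the submartingale $|M|$ at the bounded stopping time $t\wedge\tau_\lambda^x$), and the conditional-expectation identity $\E_{\Pi_y}[M_{t\wedge\tau_\lambda^x}\mid\F_s^B]=M_{s\wedge\tau_\lambda^x}$ is the standard stopped-martingale fact.

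The only genuinely delicate point — the ``main obstacle'', modest as it is — is the continuity argument giving $M_{\tau_\lambda^x}=0$ on $\{\tau_\lambda^x<\infty\}$: it is precisely because the moving barrier $s\mapsto h^{-1}(-x-\gamma'(\lambda)s)$ must be reached from strictly above (so that the path first touches it rather than overshoots) that one restricts to $y>h^{-1}(-x)$, and I would be careful there to invoke explicitly that $h$ is continuous (which is clear, since $h$ is $1$-periodic up to the additive constant and $h'$ is continuous, as recorded around \eqref{def-h}–\eqref{def-h2}). Everything else is bookkeeping: the algebraic rewriting of $\Lambda_t^{(x,\lambda)}$ in terms of $\Xi_t(\lambda)$ and $\Upsilon_t(\lambda)$, and the routine invocation of optional stopping for the continuous martingale $M$.
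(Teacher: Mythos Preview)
Your proposal is correct and follows essentially the same approach as the paper: define the linear combination $M_t=x\,\Xi_t(\lambda)+\Upsilon_t(\lambda)$, observe that $\Lambda_t^{(x,\lambda)}=M_t\,\mathbf{1}_{\{\tau_\lambda^x>t\}}$, note $M_{\tau_\lambda^x}=0$ by continuity, and conclude $\Lambda_t^{(x,\lambda)}=M_{t\wedge\tau_\lambda^x}$ is a stopped martingale. Your write-up is in fact more careful than the paper's on the point $M_{\tau_\lambda^x}=0$, explicitly using the hypothesis $y>h^{-1}(-x)$ and path continuity where the paper simply asserts $F(B_{\tau_\lambda^x})=0$.
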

\begin{proof}
By Lemmas \ref{lemma_Yt} and \ref{mart_deri_BM},
$\{\Xi_t(\lambda):t\geq 0\}$ and
$\{\Upsilon_t(\lambda):t\geq 0\}$
are $\Pi_y$-martingales.
Put $F(B_t): = \Upsilon_t(\lambda) + x \Xi_t(\lambda)$.
Then $\{F(B_t):t\geq 0 \}$ is a $\Pi_y$-martingale.
The fact $\tau^x_{\lambda}$ is an $\{\F_t^B \}$-stopping time yields that $\{F(B_{\tau^x_{\lambda}\wedge t}):t\geq 0\}$ is a
$\Pi_y$-martingale.
Note that
\begin{equation*}
\Lambda_t^{(x,\lambda)} =
\left(\Upsilon_t(\lambda) + x\Xi_t(\lambda)\right)\textbf{1}_{\{\tau^x_{\lambda} > t \}}
= F(B_t)\textbf{1}_{\{\tau^x_{\lambda} > t \}}
\end{equation*}
and
\begin{equation*}
F(B_{\tau^x_{\lambda}}) = 0.
\end{equation*}
Thus
\begin{equation*}
F(B_{\tau^x_{\lambda}\wedge t}) = F(B_t)\textbf{1}_{\{\tau^x_{\lambda} > t \}} + F(B_{\tau^x_{\lambda}})\textbf{1}_{\{\tau^x_{\lambda} \leq t \}} = \Lambda_t^{(x,\lambda)},
\end{equation*}
and hence	$\{\Lambda_t^{(x,\lambda)},t\geq 0\}$ is a $\Pi_y$-martingale.
\end{proof}

For $x, y\in \R$ with $y>h^{-1}(-x)$, consider a new probability measure $\Pi^{(x, \lambda)}_y$ defined by
\begin{equation}\label{meas_change}
\frac{\mathrm{d}\Pi_y^{(x,\lambda)}}{\mathrm{d}\Pi_y}\bigg{|} _{\mathcal{F}_t^B} = \frac{\Lambda_t^{(x,\lambda)}}{\Lambda_0^{(x,\lambda)}}.
\end{equation}
Recall that under $\Pi^\lambda_x$, $\{Y_t\}$ satisfies $\mathrm{d}Y_t = \frac{\phi_x(Y_t,\lambda)}{\phi(Y_t,\lambda)}\mathrm{d}t + \mathrm{d}\widehat B_t$, where $\{\widehat B_t, t\ge 0; \Pi^\lambda_x\}$ is a  Brownian motion starting from $x$.
The following lemma, which says that
$h(Y_t)+\gamma'(\lambda)t$
is a martingale under $\Pi^\lambda_x$, is crucial to study the behavior of the motion under $\Pi_y^{(x,\lambda)}$.

\begin{lemma}\label{lemma_mart_h}
Define
\begin{equation}
M_t: = \gamma'(\lambda)t + h(Y_t) -h(Y_0), \quad t\ge 0.
\end{equation}
Then $\left\{M_t, t\geq 0; \Pi^\lambda_x\right\}$ is a martingale. Moreover, there exist two constants $c_2>c_1>0$
such that the quadratic variation $\langle M \rangle_t\in [c_1t, c_2t]$.
\end{lemma}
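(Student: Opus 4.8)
The plan is to recognize $h$ as (minus) the logarithmic $\lambda$-derivative of the ground-state function $\phi$, and to exploit the fact that the diffusion $Y$ under $\Pi^\lambda_x$ is precisely the $h$-transform of the Schr\"odinger operator $\mathcal{L}_0:=\tfrac12\partial_{xx}+(m\mathbf g-\gamma(\lambda))$ by $\phi$. Recall from \eqref{def-h2} that $h=-\phi_\lambda/\phi$, where $\phi(\cdot,\lambda)$ is the strictly positive solution of \eqref{phi-equ}; since $\phi$ solves a second-order linear ODE with continuous coefficients it is $C^2$ in $x$, and then \eqref{diff-phi} exhibits $\phi_\lambda$ as a solution of a second-order ODE with continuous right-hand side, so $\phi_\lambda$ is $C^2$ in $x$ as well. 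As $\phi>0$, it follows that $h\in C^2(\R)$, which is what is needed to apply It\^o's formula.

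First I would apply It\^o's formula to $h(Y_t)$. By \eqref{dY}, under $\Pi^\lambda_x$ we have $\mathrm{d}Y_t=(\phi_x/\phi)(Y_t)\,\mathrm{d}t+\mathrm{d}\widehat B_t$ with $\widehat B$ a $\Pi^\lambda_x$-Brownian motion, so
\[
\mathrm{d}M_t=\Big(\tfrac12 h''(Y_t)+\tfrac{\phi_x}{\phi}(Y_t)\,h'(Y_t)+\gamma'(\lambda)\Big)\mathrm{d}t+h'(Y_t)\,\mathrm{d}\widehat B_t=\big(\mathcal{A}h(Y_t)+\gamma'(\lambda)\big)\mathrm{d}t+h'(Y_t)\,\mathrm{d}\widehat B_t,
\]
where $\mathcal{A}$ is the generator \eqref{Y_infin} (using $\psi_x/\psi-\lambda=\phi_x/\phi$). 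Next I would verify the key algebraic identity $\mathcal{A}h\equiv-\gamma'(\lambda)$. A one-line computation using \eqref{phi-equ} in the form $\mathcal{L}_0\phi=0$ yields the $h$-transform identity $\mathcal{A}f=\phi^{-1}\mathcal{L}_0(\phi f)$ for every $f\in C^2$. Applying this with $f=h$, and using $\phi h=-\phi_\lambda$ together with \eqref{diff-phi} in the form $\mathcal{L}_0\phi_\lambda=\gamma'(\lambda)\phi$, we get $\mathcal{A}h=\phi^{-1}\mathcal{L}_0(-\phi_\lambda)=-\phi^{-1}\gamma'(\lambda)\phi=-\gamma'(\lambda)$. (Equivalently, the same cancellation can be checked by a direct computation from \eqref{eigen} and \eqref{eigen_diff} written in terms of $\psi$.) Consequently $M_t=\int_0^t h'(Y_s)\,\mathrm{d}\widehat B_s$.

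It then remains to upgrade this to a true martingale and to estimate $\langle M\rangle$. Since $h'$ is $1$-periodic and continuous (as noted before Lemma \ref{lemma_h_incre}) it is bounded, so $\int_0^t\Pi^\lambda_x[h'(Y_s)^2]\,\mathrm{d}s<\infty$ for every $t$, and hence $t\mapsto\int_0^t h'(Y_s)\,\mathrm{d}\widehat B_s$ is a square-integrable (thus genuine) $\Pi^\lambda_x$-martingale, which proves the first assertion. For the quadratic variation, $\langle M\rangle_t=\int_0^t h'(Y_s)^2\,\mathrm{d}s$; by Lemma \ref{lemma_h_incre} we have $h'>0$, and by $1$-periodicity and continuity the constants $c_1:=\min_{x\in[0,1]}h'(x)^2>0$ and $c_2:=\max_{x\in[0,1]}h'(x)^2<\infty$ satisfy $c_1\le h'(\cdot)^2\le c_2$ on all of $\R$; integrating over $[0,t]$ gives $\langle M\rangle_t\in[c_1t,c_2t]$.

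The only substantive step is the identity $\mathcal{A}h=-\gamma'(\lambda)$, i.e.\ that the drift of $h(Y_t)$ is the constant $-\gamma'(\lambda)$; this is where the $\lambda$-differentiated eigenvalue equation \eqref{diff-phi} (equivalently \eqref{eigen_diff}) enters. Everything else is routine stochastic calculus together with the positivity and periodicity of $h'$ already established, so I do not anticipate any real difficulty once that cancellation is carried out.
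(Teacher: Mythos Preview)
Your proposal is correct and follows essentially the same route as the paper: apply It\^o's formula to $h(Y_t)$ under $\Pi^\lambda_x$, identify the drift as $-\gamma'(\lambda)$ via the $\lambda$-differentiated eigenvalue equation, and then use the periodicity and positivity of $h'$ (Lemma~\ref{lemma_h_incre}) to bound $\langle M\rangle_t$. The only cosmetic difference is that the paper carries out the drift computation by directly expanding $\mathcal{A}(\phi_\lambda/\phi)$ term by term, whereas you package the same cancellation through the Doob-transform identity $\mathcal{A}f=\phi^{-1}\mathcal{L}_0(\phi f)$ together with $\mathcal{L}_0\phi_\lambda=\gamma'(\lambda)\phi$; the algebra is identical.
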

\begin{proof}
By \eqref{def-h2}, $h(Y_t)=-\frac{\phi_{\lambda}(Y_t,\lambda)}{\phi(Y_t,\lambda)}$.
Using \eqref{dY} and It\^{o}'s formula, we have
\begin{align*}
&\mathrm{d}\left(\frac{\phi_{\lambda}(Y_t,\lambda)}{\phi(Y_t,\lambda)}\right) = \frac{\partial}{\partial x} \left(\frac{\phi_{\lambda}(Y_t,\lambda)}{\phi(Y_t,\lambda)}\right) \mathrm{d}Y_t + \frac{1}{2} \frac{\partial^2}{\partial x^2} \left(\frac{\phi_{\lambda}(Y_t,\lambda)}{\phi(Y_t,\lambda)}\right) \mathrm{d}t\\
&= \frac{\partial}{\partial x} \left(\frac{\phi_{\lambda}(Y_t,\lambda)}{\phi(Y_t,\lambda)}\right) \mathrm{d}\widehat B_t + \bigg{[}\frac{\phi_x(Y_t,\lambda)}{\phi(Y_t,\lambda)} \frac{\partial}{\partial x} \left(\frac{\phi_{\lambda}(Y_t,\lambda)}{\phi(Y_t,\lambda)}\right) + \frac{1}{2} \frac{\partial^2}{\partial x^2} \left(\frac{\phi_{\lambda}(Y_t,\lambda)}{\phi(Y_t,\lambda)}\right) \bigg{]} \mathrm{d}t,
\end{align*}
where $\{\widehat{B}_t, t\geq 0; \Pi_x^{\lambda} \}$ is a Brownian motion.
Notice that
\begin{align}\label{phi_lambda_eq}
\frac{1}{2} \phi_{xx}(x,\lambda) + m\mathbf g(x)\phi(x,\lambda) &= \gamma(\lambda)\phi(x,\lambda), \notag \\
\frac{1}{2} \phi_{\lambda xx}(x,\lambda) + m\mathbf g(x) \phi_{\lambda}(x,\lambda) &= \gamma'(\lambda)\phi(x,\lambda) + \gamma(\lambda)\phi_{\lambda}(x,\lambda).
\end{align}
Thus
\begin{align*}
\frac{\phi_x}{\phi} \frac{\partial}{\partial x} \frac{\phi_{\lambda}}{\phi} + \frac{1}{2} \frac{\partial^2}{\partial x^2} \frac{\phi_{\lambda}}{\phi} &= \frac{\phi_x}{\phi} \frac{\phi_{\lambda x}\phi - \phi_{\lambda}\phi_x}{\phi^2} + \frac{1}{2} \frac{\phi_{\lambda xx}}{\phi} - \frac{\phi_{\lambda x}\phi_x}{\phi^2} +  \frac{\phi_{\lambda}\phi_x^2}{\phi^3} - \frac{1}{2} \frac{\phi_{\lambda}\phi_{xx}}{\phi^2}\\
&= \frac{1}{2} \frac{\phi_{\lambda xx}}{\phi} - \frac{1}{2} \frac{\phi_{\lambda}\phi_{xx}}{\phi^2}\\
&= \frac{\gamma'(\lambda)\phi + \gamma(\lambda)\phi_{\lambda} - m\mathbf g \phi_{\lambda}}{\phi} - \frac{1}{2} \frac{\phi_{\lambda}\phi_{xx}}{\phi^2}\\
&= \gamma'(\lambda) - \frac{\phi_{\lambda}}{\phi^2} \left[ \frac{1}{2}\phi_{xx} + (m\mathbf g-\gamma(\lambda)) \phi \right]
 = \gamma'(\lambda).
\end{align*}
This yields
\begin{equation}\label{mart_h}
\mathrm{d}\left(h(Y_t)\right) =  h'(Y_t) \mathrm{d}\widehat B_t -\gamma'(\lambda) \mathrm{d}t.
\end{equation}
It follows from the boundedness of $h'$ and Lemma \ref{lemma_h_incre} that $h'(x)\in [\sqrt{c_1},\sqrt{c_2}]$ for two constants $c_1\leq c_2$.
Integrating both sides of \eqref{mart_h} gives
\begin{equation*}
h(Y_t)- h(Y_0) = \int_0^th'(Y_s) \mathrm{d}\widehat B_s- \gamma'(\lambda) t.
\end{equation*}
Hence,
\begin{equation*}
M_t =\int_0^t h'(Y_s) \mathrm{d}\widehat B_s
\end{equation*}
is a martingale with quadratic variation
\begin{equation}\label{quadratic}
\langle M \rangle_t = \int_0^t \left( h'(Y_s) \right)^2 \mathrm{d}s \in [c_1t, c_2t].
\end{equation}
\end{proof}

Let $T(s) = \inf\{t>0:\langle M \rangle_t>s \}$.
Thanks to the Dambis-Dubins-Schwarz theorem, $\widetilde{B}_t := M_{T(t)}$ is a standard Brownian motion.
Note that for $y>h^{-1}(-x)$,
\begin{align}
\frac{\mathrm{d}\Pi_y^{(x,\lambda)}}{\mathrm{d}\Pi^{\lambda}_y}\bigg{|} _{\mathcal{F}_t^B}= &\frac{\mathrm{d}\Pi_y^{(x,\lambda)}}{\mathrm{d}\Pi^{}_y}\bigg{|} _{\mathcal{F}_t^B}\times \frac{\mathrm{d}\Pi_y}{\mathrm{d}\Pi^{\lambda}_y}\bigg{|} _{\mathcal{F}_t^B} \\
= &\frac{x+\gamma'(\lambda)t + h(Y_t) }{x+h(y)}\textbf{1}_{\{\forall s\leq t: \; x +h(y) + M_s > 0 \}}  \\
= &\frac{x+h(y) + M_t}{x+h(y)} \mathbf{1}_{\{\forall s\leq t: \; x +h(y) + M_s > 0 \}}.
\end{align}
Put
\begin{align}
\widetilde{\Lambda}_t &= \Lambda_{T(t)}^{(x,\lambda)}  = \left(x+h(y) + M_{T(t)}\right) \mathbf{1}_{\{\forall s\leq t: \; x +h(y) + M_{T(s)} > 0 \}}\\
&= \left(x +h(y) + \tilde{B}_t\right) \mathbf{1}_{\{\forall s\leq t: \; x+ h(y) + \tilde{B}_s > 0 \}}
\end{align}
and $\mathcal{G}_t = \mathcal{F}_{T(t)}^B$. We have
\begin{align}
\frac{\mathrm{d}\Pi_y^{(x,\lambda)}}{\mathrm{d}\Pi^{\lambda}_y}
\bigg{|} _{\mathcal{G}_t}
= \frac{x+h(y) + M_{T(t)}}{x+h(y)} \mathbf{1}_{\{\forall s\leq t: \; x+h(y) + M_{T(s)} > 0 \}},
\end{align}
that is,
\begin{align}
\frac{\mathrm{d}\Pi_y^{(x,\lambda)}}{\mathrm{d}\Pi^{\lambda}_y}\bigg{|} _{\mathcal{G}_{t}}
= \frac{\widetilde{\Lambda}_t}{\widetilde{\Lambda}_0}.
\end{align}
By \cite{Imhof},
$\left\{x +h(y)+ \tilde{B}_t, t\geq 0; \Pi_y^{(x,\lambda)}\right\}$
is a standard Bessel-3 process starting at
$x +h(y)$, i.e.,
$\left\{x +h(y)+ M_{T(t)}, t\geq 0; \Pi_y^{(x,\lambda)}\right\}$
is a standard Bessel-3 process
starting at $x +h(y)$.

\subsection{Martingales for branching Brownian motion}
In this subsection we give three martingales, that will play important roles for BBMPE.
First we prove that for any $\lambda\in\R$ and $x\in\R$, $\{W_t(\lambda),t\geq 0\}$ is a martingale.
For this, we need the following many-to-one lemma (see \cite{HR17}, \cite{LTZ} and \cite[\S 2.3]{Maillard}).

\begin{lemma}[Many-to-one Lemma]\label{l:many-to-one}
Let $t>0$ and
	$F:C[0,t]\rightarrow\R$ be a measurable function such that $F\left(B_s,s\in[0,t]\right)$ is $\F_{t}^B$-measurable. Then
	\begin{equation}\label{many-to-one}
	\E_x \left[\sum_{u\in N_t} F\left(X_u(s),s\in[0,t]\right) \right]  = \Pi_x\left[ e^{m\int_0^{t} \mathbf g(B_s) \mathrm{d}s} F\left(B_s,s\in[0,t]\right) \right] .
	\end{equation}
\end{lemma}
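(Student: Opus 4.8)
The plan is to establish \eqref{many-to-one} first for functionals depending on the endpoint only, then for cylindrical functionals by induction on the number of marginals, and finally for arbitrary measurable $F$ by a functional monotone class argument.

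\textbf{Endpoint functionals.} I would first treat $F(\{B_s\}_{s\le t})=f(B_t)$ with $f$ bounded measurable, i.e.\ show
\[
w(t,x):=\E_x\Big[\sum_{u\in N_t} f(X_u(t))\Big]=\Pi_x\Big[e^{m\int_0^t \mathbf g(B_s)\,\mathrm{d}s}\,f(B_t)\Big]=:v(t,x).
\]
Conditioning on the Brownian trajectory of the initial particle and on its death time $r$, which by \eqref{KPP-P} occurs at ``rate'' $\mathbf g(B_r)$, and then using the branching property built into the construction of the BBMPE (at time $r$ the initial particle is replaced by $1+L$ independent copies of the BBMPE started from $B_r$), one obtains the renewal equation
\[
w(t,x)=\Pi_x\big[e^{-\int_0^t \mathbf g(B_s)\,\mathrm{d}s} f(B_t)\big]+(1+m)\int_0^t \Pi_x\big[\mathbf g(B_r)\,e^{-\int_0^r \mathbf g(B_s)\,\mathrm{d}s}\,w(t-r,B_r)\big]\,\mathrm{d}r,
\]
with $1+m=\mathbf E(1+L)$. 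A direct Feynman--Kac computation (write $m\mathbf g=(1+m)\mathbf g-\mathbf g$ and apply Duhamel's formula for the semigroup killed at rate $\mathbf g$) shows that $v$ solves the same equation; since $\mathbf g\le\beta$, this equation has a unique bounded solution on each finite interval by Gronwall's inequality, hence $w=v$. Taking $f\equiv 1$ records in particular $\E_x[\#N_t]=\Pi_x[e^{m\int_0^t \mathbf g(B_s)\,\mathrm{d}s}]\le e^{m\beta t}<\infty$.

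\textbf{Cylindrical functionals.} For $0<t_1<\cdots<t_n=t$ and $f\colon\R^n\to\R$ bounded measurable, I would prove $\E_x[\sum_{u\in N_t} f(X_u(t_1),\dots,X_u(t_n))]=\Pi_x[e^{m\int_0^t \mathbf g(B_s)\,\mathrm{d}s} f(B_{t_1},\dots,B_{t_n})]$ by induction on $n$, the case $n=1$ being the previous step. For $n\ge 2$, condition on the BBMPE up to time $t_{n-1}$: each $w\in N_t$ has an ancestor $u\in N_{t_{n-1}}$ with $X_w(t_i)=X_u(t_i)$ for $i\le n-1$, while $X_w(t_n)$ is determined by the sub-BBMPE rooted at $u$, which by the Markov branching property (here the lack-of-memory of the killing clock is used) is an independent BBMPE started from $X_u(t_{n-1})$. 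Applying the endpoint case to each sub-tree, run for time $t-t_{n-1}$ with the first $n-1$ arguments of $f$ frozen, rewrites the left-hand side as $\E_x[\sum_{u\in N_{t_{n-1}}} H(X_u(t_1),\dots,X_u(t_{n-1}))]$ with $H(a_1,\dots,a_{n-1})=\Pi_{a_{n-1}}[e^{m\int_0^{t-t_{n-1}} \mathbf g(B_s)\,\mathrm{d}s} f(a_1,\dots,a_{n-1},B_{t-t_{n-1}})]$; the induction hypothesis for the $n-1$ times $t_1<\cdots<t_{n-1}$ converts this into $\Pi_x[e^{m\int_0^{t_{n-1}} \mathbf g(B_s)\,\mathrm{d}s} H(B_{t_1},\dots,B_{t_{n-1}})]$, and the Markov property of Brownian motion at $t_{n-1}$ recombines the two exponentials into $e^{m\int_0^t \mathbf g(B_s)\,\mathrm{d}s}$ and the arguments into $(B_{t_1},\dots,B_{t_n})$, as required.

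\textbf{General $F$ and main obstacle.} The set of bounded measurable $F\colon C[0,t]\to\R$ satisfying \eqref{many-to-one} is a vector space containing the cylindrical functions, which form an algebra generating the Borel $\sigma$-field of $C[0,t]$; it is closed under bounded monotone limits, since dominated convergence applies on the right (the dominating function $e^{m\beta t}$ is $\Pi_x$-integrable) and on the left (the dominating function $\|F\|_\infty\,\#N_t$ is $\E_x$-integrable, by the endpoint step). The functional monotone class theorem then yields \eqref{many-to-one} for all bounded measurable $F$, and monotone convergence extends it to all nonnegative measurable $F$, which is the lemma. The only genuinely delicate point is the bookkeeping in the cylindrical step: one must verify that for a descendant $w$ of $u\in N_{t_{n-1}}$ the truncated path $\{X_w(s)\}_{s\le t_{n-1}}$ coincides with $\{X_u(s)\}_{s\le t_{n-1}}$ while the remainder lives in the time-shifted sub-tree, and one must phrase the (strong) Markov branching property of the BBMPE precisely enough to legitimize conditioning at time $t_{n-1}$ — the restart being valid only because the killing in \eqref{KPP-P} is memoryless given the particle's path. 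By comparison the endpoint step is routine once boundedness of $\mathbf g$ supplies uniqueness for the renewal equation.
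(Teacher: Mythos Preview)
Your argument is correct. The renewal equation for the endpoint case, the induction on the number of marginals via the Markov branching property, and the monotone class closure are all sound; the integrability bound $\E_x[\#N_t]\le e^{m\beta t}$ that you extract from the endpoint step is exactly what is needed to dominate on the branching side.

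Note, however, that the paper does not actually prove this lemma: it simply quotes it as a known result, citing \cite{HR17}, \cite{LTZ} and \cite[\S 2.3]{Maillard}. Those references typically obtain the many-to-one formula via the spine construction --- one builds the size-biased measure $\widetilde{\P}_x^\lambda$ (or its $\lambda=0$ analogue), identifies the spine motion, and reads off the first-moment identity from the change of measure --- rather than through the renewal/Duhamel route you take. Your approach is more elementary in that it avoids setting up the enlarged filtration and the spine measure altogether, relying only on the first-branching decomposition and Feynman--Kac; the spine approach, by contrast, yields the many-to-one formula as a byproduct of a structure that the paper needs anyway for the proofs of Theorems~\ref{thrm1} and~\ref{thrm2}. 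Both arguments require the same memorylessness of the killing clock given the path, which you correctly flag as the crux of the branching-property step.
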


\begin{lemma}
For any $\lambda\in\R$ and $x\in\R$, $\left\{W_t(\lambda),t\geq 0; \P_x\right\}$ is a non-negative martingale and the limit $W(\lambda,x):= \lim_{t\uparrow\infty} W_t(\lambda)$ exists $\P_x$-almost surely.
\end{lemma}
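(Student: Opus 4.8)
The plan is to verify non-negativity and integrability first, then the $[s,t]$-step martingale identity via the branching property combined with the Many-to-one Lemma, and finally to invoke Doob's convergence theorem. Non-negativity of $W_t(\lambda)$ is immediate from \eqref{mart_add}, since $\psi(\cdot,\lambda)>0$ and every exponential factor is positive.

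For integrability, I would apply the Many-to-one Lemma (Lemma~\ref{l:many-to-one}) with $F(B_r,r\in[0,t])=e^{-\lambda B_t}\psi(B_t,\lambda)$. Since, by the definition \eqref{mart_eta-t} of $\Xi_t(\lambda)$, one has $e^{m\int_0^t\mathbf g(B_s)\mathrm{d}s}e^{-\lambda B_t}\psi(B_t,\lambda)=e^{\gamma(\lambda)t}\Xi_t(\lambda)$, this gives
\[
\E_x[W_t(\lambda)] = e^{-\gamma(\lambda)t}\,\Pi_x\!\left[e^{m\int_0^t\mathbf g(B_s)\mathrm{d}s}e^{-\lambda B_t}\psi(B_t,\lambda)\right] = \Pi_x[\Xi_t(\lambda)] = \Xi_0(\lambda) = e^{-\lambda x}\psi(x,\lambda)<\infty,
\]
where the penultimate equality is the martingale property of $\{\Xi_t(\lambda)\}$ established in Lemma~\ref{lemma_Yt}. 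In particular $W_t(\lambda)\in L^1(\P_x)$.

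For the martingale identity, fix $0\le s\le t$. By the branching property of BBMPE, conditionally on $\F_s$ the collections of time-$t$ descendants of the distinct particles $u\in N_s$ are independent, the descendants of $u$ forming a BBMPE of duration $t-s$ issued from $X_u(s)$. Applying the Many-to-one Lemma to each such subtree (with the time-translated $F$ above) and again the martingale property of $\{\Xi_r(\lambda)\}$ under $\Pi_{X_u(s)}$, the contribution of $u$ to $\E_x[W_t(\lambda)\mid\F_s]$ equals
\[
e^{-\gamma(\lambda)t}\,\Pi_{X_u(s)}\!\left[e^{m\int_0^{t-s}\mathbf g(B_r)\mathrm{d}r}e^{-\lambda B_{t-s}}\psi(B_{t-s},\lambda)\right] = e^{-\gamma(\lambda)s}\,e^{-\lambda X_u(s)}\psi(X_u(s),\lambda).
\]
Summing over $u\in N_s$ yields $\E_x[W_t(\lambda)\mid\F_s]=W_s(\lambda)$; interchanging the conditional expectation with the a.s.\ finite sum over $N_s$ is legitimate because all terms are non-negative (conditional Tonelli). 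Hence $\{W_t(\lambda),t\ge0;\P_x\}$ is a non-negative martingale.

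Finally, being a non-negative martingale, $\{W_t(\lambda)\}$ converges $\P_x$-almost surely by Doob's martingale convergence theorem, so $W(\lambda,x):=\lim_{t\uparrow\infty}W_t(\lambda)$ exists in $[0,\infty)$. There is essentially no deep obstacle here: the only point requiring care is the clean bookkeeping of the branching property together with the time-translated Many-to-one Lemma, and no integrability difficulty arises since every quantity involved is non-negative. Whether the limit is non-trivial (the $L^1$ question) is precisely the content of Theorem~\ref{thrm1} and is not addressed at this stage.
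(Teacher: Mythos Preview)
Your proposal is correct and follows essentially the same approach as the paper: compute $\E_x[W_t(\lambda)]$ via the Many-to-one Lemma and the martingale property of $\Xi_t(\lambda)$, then use the branching property to reduce $\E_x[W_t(\lambda)\mid\F_s]$ to a sum of $\E_{X_u(s)}[W_{t-s}^{(u)}(\lambda)]$ terms, and conclude with Doob's convergence theorem. The paper's presentation differs only cosmetically, writing the subtree contribution directly as $\E_{X_v(s)}W_{t-s}^{(v)}(\lambda)$ rather than re-invoking Many-to-one on each subtree.
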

\begin{proof}
By Lemma \ref{l:many-to-one}
and the fact that
$\{\Xi_t(\lambda): t\geq 0\}$ is a martingale, we get
\begin{equation}
	\E_xW_t(\lambda) = \Pi_x \left[e^{m\int_0^t \mathbf g(B_s) \mathrm{d}s - \gamma(\lambda)t-\lambda B_t} \psi(B_t,\lambda)\right]
    = \Pi_x\Xi_t(\lambda) = \Pi_x\Xi_0(\lambda) = e^{-\lambda x}\psi(x,\lambda).
	\end{equation}
Thus by the branching property, for $s<t$ we have
	\begin{align*}
	\E_x\left[W_t(\lambda) \;|\; \F_s\right] &= \E_x \left[ e^{-\gamma(\lambda)t} \sum_{u\in N_t} e^{-\lambda X_u(t)} \psi(X_u(t),\lambda) \;\Big|\; \F_s \right]\\
	&= e^{-\gamma(\lambda)s} \sum_{v\in N_s} \E_x \left[ e^{-\gamma(\lambda)(t-s)} \sum_{u\in N_t, u>v} e^{-\lambda X_u(t)} \psi(X_u(t),\lambda) \;\Big|\; \F_s \right]\\
	&= e^{-\gamma(\lambda)s} \sum_{v\in N_s} \E_{X_v(s)} W_{t-s}^{(v)}(\lambda)\\
	&= e^{-\gamma(\lambda)s} \sum_{v\in N_s} e^{-\lambda X_v(s)} \psi(X_v(s),\lambda)
    = W_s(\lambda),
	\end{align*}
	where $u>v$ denotes that $u$ is a descendant of $v$ and $W_{t-s}^{(v)}(\lambda)$ is the additive martingale for the BBMPE starting from $X_v(s)$.
	Therefore, $\{W_t(\lambda):t\geq 0 \}$ is a non-negative martingale and the limit $W(\lambda,x) = \lim_{t\uparrow\infty} W_t(\lambda)$ exists $\P_x$-almost surely.
\end{proof}

The second martingale is given by the following lemma.
\begin{lemma}
For any $\lambda\in\R$ and $x\in\R$, $\{\partial W_t(\lambda),t\geq 0; \P_x\}$ is a martingale.
\end{lemma}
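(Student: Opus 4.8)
The plan is to imitate exactly the proof that $\{W_t(\lambda)\}$ is a martingale, replacing the one–particle martingale $\Xi_t(\lambda)$ by the one–particle martingale $\Upsilon_t(\lambda)$ of Lemma~\ref{mart_deri_BM}. First I would record the integrability that makes everything legitimate: $\psi(\cdot,\lambda)$ and $\psi_\lambda(\cdot,\lambda)$ are $1$-periodic and continuous (Lemmas~\ref{tildepsi}--\ref{lemma_cpsi}), hence bounded in $x$ for each fixed $\lambda$, and $0<\alpha\le\mathbf g\le\beta$; combined with the finiteness of the polynomial/exponential moments of Brownian motion, the Many-to-one Lemma (Lemma~\ref{l:many-to-one}) applied to the modulus of the relevant functional gives
\[
\E_x\Big[\sum_{u\in N_t} e^{-\gamma(\lambda)t} e^{-\lambda X_u(t)}\big(|\psi(X_u(t),\lambda)|(|\gamma'(\lambda)|t+|X_u(t)|)+|\psi_\lambda(X_u(t),\lambda)|\big)\Big]<\infty ,
\]
so $\partial W_t(\lambda)\in L^1(\P_x)$ and the Many-to-one Lemma also applies to the signed functional (splitting into positive and negative parts if one wishes to be fully careful). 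Applying it and then Lemma~\ref{mart_deri_BM} yields
\[
\E_x[\partial W_t(\lambda)]=\Pi_x[\Upsilon_t(\lambda)]=\Upsilon_0(\lambda)=e^{-\lambda x}\big(\psi(x,\lambda)x-\psi_\lambda(x,\lambda)\big),
\]
independent of $t$. In particular the same identity with $x$ replaced by $y$ gives $\E_y[\partial W_r(\lambda)]=e^{-\lambda y}(\psi(y,\lambda)y-\psi_\lambda(y,\lambda))$ for all $r\ge 0$, and we already have $\E_y[W_r(\lambda)]=e^{-\lambda y}\psi(y,\lambda)$ from the previous lemma.

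Next I would invoke the branching property at a time $s<t$. Decomposing $N_t$ over the ancestors $v\in N_s$ and using, for a descendant $u>v$, the splitting $\gamma'(\lambda)t+X_u(t)=\big(\gamma'(\lambda)(t-s)+X_u(t)\big)+\gamma'(\lambda)s$, one obtains the algebraic identity
\[
e^{-\gamma(\lambda)t}\!\!\sum_{u\in N_t,\,u>v}\!\! e^{-\lambda X_u(t)}\big(\psi(X_u(t),\lambda)(\gamma'(\lambda)t+X_u(t))-\psi_\lambda(X_u(t),\lambda)\big)=e^{-\gamma(\lambda)s}\big(\partial W^{(v)}_{t-s}(\lambda)+\gamma'(\lambda)s\,W^{(v)}_{t-s}(\lambda)\big),
\]
where $\partial W^{(v)}_{\cdot}(\lambda)$ and $W^{(v)}_{\cdot}(\lambda)$ are the derivative and additive functionals of the sub-BBMPE rooted at $v$ and started from $X_v(s)$ (positions are kept absolute, since the branching rate depends on absolute position). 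Taking $\E_x[\,\cdot\mid\F_s]$, using the branching property to replace each summand over $v$ by $\E_{X_v(s)}[\partial W^{(v)}_{t-s}(\lambda)]+\gamma'(\lambda)s\,\E_{X_v(s)}[W^{(v)}_{t-s}(\lambda)]$, and plugging in the two one-particle expectations above, the two pieces recombine into
\[
e^{-\gamma(\lambda)s}\!\sum_{v\in N_s}\! e^{-\lambda X_v(s)}\big(\psi(X_v(s),\lambda)(\gamma'(\lambda)s+X_v(s))-\psi_\lambda(X_v(s),\lambda)\big)=\partial W_s(\lambda),
\]
which is precisely the martingale identity.

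The only genuinely delicate point is the integrability/interchange bookkeeping that makes the Many-to-one Lemma and the conditional branching decomposition valid for a summand that changes sign; everything else is bookkeeping and the two displayed algebraic identities. I would dispose of this uniformly by means of the modulus bound above — note that $\E_x|N_t|<\infty$ is automatic since $m<\infty$ and $\mathbf g$ is bounded — which simultaneously shows $\partial W_t(\lambda)\in L^1(\P_x)$ and licenses Fubini/dominated convergence in the conditioning step. No analytic input beyond Lemmas~\ref{lemma_gamma}--\ref{mart_deri_BM} and Lemma~\ref{l:many-to-one} is required.
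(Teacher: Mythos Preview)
Your proposal is correct and follows essentially the same route as the paper: apply the Many-to-one Lemma together with the one-particle martingale $\Upsilon_t(\lambda)$ of Lemma~\ref{mart_deri_BM} to compute $\E_x[\partial W_t(\lambda)]$, then use the branching property at time $s$ with the splitting $\gamma'(\lambda)t=\gamma'(\lambda)(t-s)+\gamma'(\lambda)s$ to reduce the conditional expectation to $\partial W_s(\lambda)$. You are in fact slightly more careful than the paper about the integrability needed to justify the Many-to-one Lemma for the signed functional, which the paper leaves implicit.
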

\begin{proof}	
It follows from Lemma \ref{l:many-to-one} that
\begin{align}
\E_x \partial W_t(\lambda) =
 \Pi_x \Upsilon_t(\lambda)=\Pi_x \Upsilon_0(\lambda)
= e^{-\lambda x}\left( x\psi(x,\lambda) - \psi_{\lambda}(x,\lambda) \right).
\end{align}
Using the branching property and the Markov property, it is easy to show that for any $t>s>0$,
\begin{align*}
&\E_x\left[ \partial W_t(\lambda) \;|\; \F_s \right] \\
= &e^{-\gamma(\lambda)s} \sum_{v\in N_s} \E_{X_v(s)} \left(\partial W_{t-s}^{(v)} (\lambda) + \gamma'(\lambda)s W_{t-s}^{(v)}(\lambda) \right)\\
= &e^{-\gamma(\lambda)s} \sum_{v\in N_s} \left( e^{-\lambda X_v(s)} \left( X_v(s)\psi(X_v(s),\lambda) - \psi_{\lambda}(X_v(s),\lambda) \right) + \gamma'(\lambda)s e^{-\lambda X_v(s)} \psi(X_v(s),\lambda) \right)\\
= &\partial W_s(\lambda),
\end{align*}
where, for each $v\in N_s$, $W_{t-s}^{(v)}(\lambda)$ and $\partial W_{t-s}^{(v)}(\lambda)$ are respectively the additive and derivative martingales for the BBMPE starting from $X_v(s)$.
Therefore,  $\{\partial W_t(\lambda),t\geq 0; \P_x\}$ is a martingale.
\end{proof}

The martingale $\{\partial W_t(\lambda)\}$ may take negative values.
To study its limit we need to consider a related non-negative martingale. Note that
\begin{align}
\partial W_t(\lambda) + x W_t(\lambda) &= e^{-\gamma(\lambda)t} \sum_{u\in N_t} e^{-\lambda X_u(t)}  \psi(X_u(t),\lambda) \left( x+\gamma'(\lambda)t+
X_u(t) - \frac{\psi_{\lambda}(X_u(t),\lambda)} {\psi(X_u(t),\lambda)} \right) \\
&= e^{-\gamma(\lambda)t} \sum_{u\in N_t} e^{-\lambda X_u(t)} \psi(X_u(t),\lambda) \left(x+\gamma'(\lambda)t+
h(X_u(t))\right).
\end{align}
Put
$$
\widetilde{N}_t^x = \left\{u\in N_t: \forall s\leq t, \; x + \gamma'(\lambda)s +
h(X_u(s)) > 0
 \right\}.
$$
In the spirit of \cite{Ky}, we define for each $x\in \R$,
\begin{equation}\label{mart_V}
V_t^x(\lambda) = \sum_{u\in \widetilde{N}_t^x} e^{-\gamma(\lambda)t-\lambda X_u(t)} \psi(X_u(t),\lambda) \left( x+\gamma'(\lambda)t + h(X_u(t))\right).
\end{equation}
Now we show that $V_t^x(\lambda)$ is a martingale.

\begin{lemma}\label{lemma_Vt}
For any $x, y\in \R$ with $y>h^{-1}(-x)$, $\{V_t^x(\lambda), t\geq 0; \P_y\}$ is a martingale with respect to $\{\F_t: t\geq 0\}$.
\end{lemma}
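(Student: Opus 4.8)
The strategy is to mirror the proofs that $\{W_t(\lambda)\}$ and $\{\partial W_t(\lambda)\}$ are martingales, combining the many-to-one Lemma \ref{l:many-to-one} with the already-established martingale property of the single-particle process $\{\Lambda_t^{(x,\lambda)}\}$ from \eqref{mart_Lambda}, while carefully tracking the absorption at the moving barrier. Observe first that $V_t^x(\lambda)\ge 0$: a particle $u\in\widetilde N_t^x$ satisfies (take $s=t$ in the defining constraint) $x+\gamma'(\lambda)t+h(X_u(t))>0$, and $\psi$ is strictly positive, so no integrability difficulty will arise once we know $\E_y V_t^x(\lambda)<\infty$. Recall also, from the display preceding the lemma, that $V_t^x(\lambda)$ is exactly the sum defining $\partial W_t(\lambda)+xW_t(\lambda)$ restricted to the surviving set $\widetilde N_t^x$.

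\emph{Step 1 (the mean).} Fix $z>h^{-1}(-x)$. Using $\{u\in\widetilde N_t^x\}=\{\forall s\le t:\,x+\gamma'(\lambda)s+h(X_u(s))>0\}=\{\tau^x_\lambda>t\}$ with $\tau^x_\lambda$ as in \eqref{def_tauB}, I would apply Lemma \ref{l:many-to-one} with the path functional
\[
F(w_s,s\le t)=e^{-\gamma(\lambda)t-\lambda w_t}\psi(w_t,\lambda)\bigl(x+\gamma'(\lambda)t+h(w_t)\bigr)\mathbf 1_{\{\forall s\le t:\,x+\gamma'(\lambda)s+h(w_s)>0\}},
\]
which is Borel on $C[0,t]$ since $h,\psi$ are continuous (so $\inf_{s\le t}(x+\gamma'(\lambda)s+h(w_s))$ is attained and measurable). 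Then $\E_z V_t^x(\lambda)=\Pi_z\bigl[e^{m\int_0^t\mathbf g(B_s)\mathrm{d}s}F(B_s,s\le t)\bigr]=\Pi_z\Lambda_t^{(x,\lambda)}$, and since $\{\Lambda_t^{(x,\lambda)}\}$ is a $\Pi_z$-martingale for $z>h^{-1}(-x)$, this equals $\Pi_z\Lambda_0^{(x,\lambda)}=e^{-\lambda z}\psi(z,\lambda)(x+h(z))$. In particular $\E_y V_t^x(\lambda)=e^{-\lambda y}\psi(y,\lambda)(x+h(y))<\infty$.

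\emph{Step 2 (branching decomposition).} For $0<s<t$ I condition on $\F_s$. Because the survival constraint defining $\widetilde N_t^x$ is imposed on the whole path up to time $t$, any $u\in\widetilde N_t^x$ has its time-$s$ ancestor $v$ in $\widetilde N_s^x$, so particles whose ancestor is already absorbed contribute nothing and $\sum_{u\in\widetilde N_t^x}=\sum_{v\in\widetilde N_s^x}\sum_{u\in\widetilde N_t^x,\,u>v}$. For $v\in\widetilde N_s^x$, apply the branching and Markov properties: relative to the sub-BBMPE rooted at $(v,s)$ with fresh time $r'=r-s$, the constraint for a descendant becomes $(x+\gamma'(\lambda)s)+\gamma'(\lambda)r'+h(X_u(s+r'))>0$, i.e.\ the survival constraint at the shifted level $x+\gamma'(\lambda)s$, while $e^{-\gamma(\lambda)t}=e^{-\gamma(\lambda)s}e^{-\gamma(\lambda)r'}$; hence $\sum_{u\in\widetilde N_t^x,\,u>v}(\cdots)=e^{-\gamma(\lambda)s}V_{t-s}^{x+\gamma'(\lambda)s,(v)}(\lambda)$, a copy of the functional for a BBMPE started from $X_v(s)$. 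Crucially $v\in\widetilde N_s^x$ gives $x+\gamma'(\lambda)s+h(X_v(s))>0$, i.e.\ $X_v(s)>h^{-1}(-(x+\gamma'(\lambda)s))$, so Step 1 applies with $(x,z)$ replaced by $(x+\gamma'(\lambda)s,X_v(s))$, yielding
\begin{align*}
\E_y\bigl[V_t^x(\lambda)\mid\F_s\bigr]
&=e^{-\gamma(\lambda)s}\sum_{v\in\widetilde N_s^x}\E_{X_v(s)}V_{t-s}^{x+\gamma'(\lambda)s}(\lambda)\\
&=e^{-\gamma(\lambda)s}\sum_{v\in\widetilde N_s^x}e^{-\lambda X_v(s)}\psi(X_v(s),\lambda)\bigl(x+\gamma'(\lambda)s+h(X_v(s))\bigr)=V_s^x(\lambda).
\end{align*}

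The main obstacle is bookkeeping rather than hard analysis: making the barrier decomposition across generations precise (that $\{u\in\widetilde N_t^x\}$ factors into ``the ancestor survives up to $s$'' and ``the descendant path survives the level-$(x+\gamma'(\lambda)s)$ barrier on $[s,t]$''), and keeping track of the accompanying time-and-space shift $x\mapsto x+\gamma'(\lambda)s$ so that the admissibility condition $z>h^{-1}(-x)$ is preserved along the genealogy. Once this is organized, both the finiteness of $\E_y V_t^x(\lambda)$ and the tower-property computation reduce to the martingale property of $\{\Lambda_t^{(x,\lambda)}\}$ and the many-to-one lemma.
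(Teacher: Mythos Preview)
Your proof is correct and, if anything, cleaner than the paper's. You compute the mean by applying the many-to-one lemma directly to the single-particle martingale $\Lambda_t^{(x,\lambda)}$, and you handle the conditioning step by shifting the barrier level $x\mapsto x+\gamma'(\lambda)s$ so that the sub-BBMPE rooted at $(v,s)$ is governed by $V_{t-s}^{x+\gamma'(\lambda)s}$ started from the actual position $X_v(s)$; the admissibility condition $X_v(s)>h^{-1}(-(x+\gamma'(\lambda)s))$ is exactly $v\in\widetilde N_s^x$, so Step~1 feeds directly into Step~2.

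The paper takes a different route. For the mean it invokes the strong Markov branching property (stopping lines \`a la Jagers/Dynkin) to establish the projection identity $\E_y[\partial W_t(\lambda)+xW_t(\lambda)\mid\F_{\mathcal L(t)}]=V_t^x(\lambda)$, and then reads off $\E_yV_t^x(\lambda)$ from the known means of $\partial W_t$ and $W_t$. For the conditional-expectation step, rather than shifting the barrier level, the paper shifts the \emph{spatial position} by an integer $\gamma'(\lambda)s+\delta\in\mathbb Z$ and uses the periodicity of $\psi$ and $h$ to rewrite the descendant contribution as $V_{t-s}^{x-\delta}$ for a process started at $X_v(s)+\gamma'(\lambda)s+\delta$. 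Both arguments are valid; yours uses only the ordinary branching property and the many-to-one lemma, avoiding both the stopping-line machinery and the periodicity trick, at the cost of not producing the projection identity (which, however, the paper does not use again).
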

\begin{proof} On the space-time half plane $\{(z,t):z\in\R, t\in\R^+ \}$,
consider the barrier
$\mathbf{\Gamma}^{(-x,\lambda)}$
described by $z=h^{-1}(-x-\gamma'(\lambda)t)$. By stopping the lines of descent the first time they hit this barrier we produce a random collection $\mathbf{C}(-x,\lambda)$ of individuals.
Let $\sigma_u$ denote the first hitting time for each
$u\in \mathbf{C}(-x,\lambda)$.
Consider the stopping ``line"
\begin{equation*}
\mathcal{L}(t) =\left\{u\in \mathbf{C}(-x,\lambda): \sigma_u\leq t \right\} \cup \widetilde{N}_t^x.
\end{equation*}
Let $\F_{\mathcal{L}(t)}$ be the natural filtration
generated by the spatial paths and the number of offspring of the individuals before hitting the stopping line $\mathcal{L}(t)$.
By the strong Markov branching property of  $\{Z_t, t\geq 0\}$ (see Jagers \cite[Theorem 4.14]{Jagers}, also see Dynkin \cite[Theorem 1.5]{Dynkin} for the corresponding property for superprocesses, where this property is called the special Markov property),
\begin{equation}
\E_y \left[\partial W_t(\lambda) + xW_t(\lambda) \;|\; \mathcal{F}_{\mathcal{L}(t)}  \right] = V_t^x(\lambda).
\end{equation}
Thus
\begin{equation*}
\E_y  V_t^x(\lambda) = \E_y \left[\partial W_t(\lambda) + xW_t(\lambda) \right] = e^{-\lambda y} \psi(y,\lambda) \left( x+
h(y) \right).
\end{equation*}
Hence we have for $0\leq s\leq t$,
\begin{align*}
&\E_y \left[ V_t^x(\lambda) \;|\; \mathcal{F}_s  \right]\\
= &\sum_{v\in\widetilde{N}_s^x} e^{-\gamma(\lambda)s+\lambda(\gamma'(\lambda)s+\delta)} \E_y \bigg{[} \sum_{u\in \widetilde{N}_t^x, u>v} e^{-\gamma(\lambda)(t-s)} e^{-\lambda (X_u(t)  +\gamma'(\lambda)s+\delta)} \psi(X_u(t),\lambda)  \\
&\qquad\times\left( (x-\delta) + \gamma'(\lambda)(t-s) + (\gamma'(\lambda)s + \delta)+ h(X_u(t))  \right)   \;\Big|\; \mathcal{F}_s \bigg{]}\\
= &\sum_{v\in\widetilde{N}_s^x} e^{-\gamma(\lambda)s+\lambda(\gamma'(\lambda)s+\delta)} \E_{X_v(s)+\gamma'(\lambda)s+\delta} V_{t-s}^{x-\delta}(\lambda, v)\\
= &\sum_{v\in\widetilde{N}_s^x} e^{-\gamma(\lambda)s+\lambda(\gamma'(\lambda)s+\delta)} e^{-\lambda(X_v(s)+\gamma'(\lambda)s+\delta)} \psi(X_v(s),\lambda)\times\left(x-\delta+(\gamma'(\lambda)s+\delta)+h(X_v(s))\right)\\
= &\sum_{v\in\widetilde{N}_s^x} e^{-\gamma(\lambda)s-\lambda X_v(s)} \psi(X_v(s),\lambda) \left(x+\gamma'(\lambda)s+h(X_v(s))\right)\\
= &V_s^x(\lambda),
\end{align*}
where $\delta$ is such that $(\delta+\gamma'(\lambda)s) \in \mathbb{Z}$,
and in second equality, for each $v$,
$V_{t-s}^{x-\delta}(\lambda, v)$ is the counterpart of
$V^{x-\delta}_{t-s}$
for the BBMPE starting from $X_v(s)+\gamma'(\lambda)s+\delta$ and we used the periodicity of $h$ and $\psi$.
So $\{V_t^x(\lambda), t\geq 0; \P_y\}$
is a martingale.
\end{proof}

\section{Proof of Theorem \ref{thrm1}}\label{Sec3}

\subsection{Measure change by the additive martingale}\label{spines}
	The spine decomposition theorem has been studied in many papers (for example, see \cite{BK04}, \cite{CR88},  \cite{HR17}, \cite{LPP95}, and \cite{RS}). For any $u\in N_t$, define
	\begin{equation}
	\Xi_t(u,\lambda) := e^{-\gamma(\lambda)t-\lambda X_u(t) + m\int_0^t\mathbf g(X_u(s))\mathrm{d}s} \psi(X_u(t),\lambda).
	\end{equation}
Then we may rewrite  $W_t(\lambda)$ as
	\begin{equation}\label{mart_W1}
	W_t(\lambda) = \sum_{u\in N_t} \Xi_t(u, \lambda) e^{-m\int_0^t \mathbf g(X_u(s))\mathrm{d}s}.
	\end{equation}
Define a new probability measure $\P_x^{\lambda}$ by
	\begin{equation}\label{meas_Q}
	\frac{\mathrm{d}\P_x^{\lambda}}{\mathrm{d}\P_x}\bigg{|} _{\mathcal{F}_t} = \frac{W_t(\lambda)}{W_0(\lambda)}.
	\end{equation}

Now we construct the space of Galton-Watson trees with a spine.
Here we use the same notation as those in \cite{Ky}.
Let $(\mathcal{T}, \mathcal{F}, \mathcal{F}_{t}, \P_x)$ be the filtered probability space in which BBMPE $\{Z_{t}: t \geqslant 0\}$ is defined. Let $\mathbb{T}$ be the space of Galton-Watson trees. A Galton-Watson tree $\tau \in \mathbb{T}$ is a point in the space of possible Ulam-Harris labels
	$$
	\Omega=\emptyset \cup \bigcup_{n \in \mathbb{N}}(\mathbb{N})^{n},
	$$
	where $\mathbb{N}=\{1,2,3, \ldots\}$ such that

    \begin{enumerate}[(i)]
    \item $\emptyset \in \tau$ (the ancestor);
    \item if $u, v \in \Omega$, $uv \in \tau$ implies $u \in \tau$;
    \item for all $u \in \tau,$ there exists $A_u \in\{0,1,2, \ldots\}$
    such that for $j \in \mathbb{N}$,  $j \in \tau$ if and only if $1 \leq j \leq 1+A_u$.
    \end{enumerate}
	(Here $1+A_u$ is the number of offspring of $u$,
	and $A_u$ has the same distribution as $L$.)
		
	Each particle $u\in\tau$ has a mark $(\eta_u, B_u)\in \R^+\times C(\R^+,\R)$, where $\eta_u$ is the lifetime of $u$ and $B_u$ is the motion of $u$ relative to its birth position. Then the birth time of $u$ can be written as $b_u = \sum_{v<u} \eta_v$, the death time of $u$ is $d_u = \sum_{v\leq u}\eta_v$ and the position of $u$ at time $t$ is given by $X_u(t) = \sum_{v<u} B_v(\eta_v)+B_u(t-b_u)$. We write $(\tau, B,\eta)$ as a short hand for the marked Galton-Watson tree $\{(u,\eta_u,B_u):u\in\tau \}$, and $\mathcal{T} = \{(\tau,B,\eta):\tau\in\mathbb{T} \}$. The $\sigma$-field $\mathcal{F}_{t}$ is generated by
	\begin{align*}
	\left\{\begin{array}{l}
	(u, A_u, \eta_{u},\{B_{u}(s): s \in[0, \eta_u]\}: u \in \tau \text { with } d_{u} \leq t) \text { and } \\
	(u,\{B_{u}(s): s \in[0, t-b_{u}]\}: u \in \tau \text { with } t \in[b_{u}, d_{u})): \tau \in \mathbb{T}
	\end{array}\right\}.
	\end{align*}
	A spine is a distinguished genealogical line of descent from the ancestor. A spine will be written as $\xi=\left\{\xi_{0}=\emptyset, \xi_{1}, \xi_{2}, \ldots\right\}$, where $\xi_{n} \in \tau$ is the label of $\xi$'s node in the $n$th generation. We write $u \in \xi$ if $u=\xi_{i}$ for some $i\geq0$. Now let
	\begin{equation*}
	\widetilde{\mathcal{T}}=\{(\tau, B, \eta, \xi): \xi \subseteq \tau \in \mathbb{T}\}
	\end{equation*}
	be the space of marked trees in $\mathcal{T}$ with distinguished spine, $\xi,$ let $\widetilde{\mathcal{F}}=\sigma(\widetilde{\mathcal{T}})$ and
	\begin{equation*}
	\widetilde{\mathcal{F}}_{t}=\sigma\left(\mathcal{F}_{t},\left\{(\xi: u \in \xi): u \in N_{t}\right\}\right).
	\end{equation*}
	We denote by $\{X_{\xi}(t):t\geq 0\}$ the spatial path followed by the spine $\xi$ and
	write $n=\{n_t:t\geq0 \}$ for the counting process of points of fission along the spine.

	Hardy and Harris \cite{HH09} noticed that it is convenient to consider $\{\mathbb{P}_x\}$ as measures
	on the enlarged space $(\widetilde{\mathcal{T}}, \F)$ rather than on $(\mathcal{T},\F)$.
	We extend the probability measures $\{\mathbb{P}_x\}$ to probability measures $\{\widetilde{\P}_x\}$ on $(\widetilde{\mathcal{T}}, \widetilde{\F})$. Under $\widetilde{\P}_x$, if $v$ is the particle in the $n$th generation on the spine, then for the next generation, the spine is chosen uniformly from the $1+A_v$ offspring of $v$. Therefore, we have
	\begin{equation}
	\widetilde{\P}_x(u\in\xi) = \prod_{v<u} \frac{1}{1+A_v}.
	\end{equation}
	Define
	\begin{equation}
	\tilde{\zeta}_t = \sum_{u\in N_t} \prod_{v<u} (1+A_v) \frac{\Xi_t(u,\lambda)}{\Xi_0(u,\lambda)} e^{-m\int_0^t \mathbf g(X_u(s))  \mathrm{d}s} \mathbf{1}_{\{\xi_t=u\}}.
	\end{equation}
    According to \cite{HH09} or \cite{RS}, $\{\tilde{\zeta}_t,\widetilde{\F}_t \}$ is a martingale and
	\begin{equation}
	\frac{W_t(\lambda)}{W_0(\lambda)} = \widetilde{\P}_x(\tilde{\zeta}_t \;|\; \F_t),
	\end{equation}
	in other words, $W_t(\lambda)$ is the projection of $\tilde{\zeta}_t$ onto $\F_t$.
	
	Now we define a probability measure $\widetilde{\P}_x^{\lambda}$ on $(\widetilde{\mathcal{T}}, \widetilde{\F})$ by
	\begin{equation}\label{meas_Qtilde}
	\frac{\mathrm{d}\widetilde{\P}_x^{\lambda}}{\mathrm{d}\widetilde{\P}_x}\bigg{|}_{\widetilde{\F}_t} = \tilde{\zeta}_t.
	\end{equation}
	According to \cite{HH09} or \cite{RS}, under $\widetilde{\P}_x^{\lambda}$:
	\begin{enumerate}[(i)]
		\item the ancestor starts from $x$  and the spine process $\xi$ moves according to $\Pi_x^{\lambda}$, that is, the spine moves as a diffusion with infinitesimal generator given by \eqref{Y_infin};
		\item given the trajectory $X_{\xi}$ of the spine, the branching rate is given by $(m+1)\mathbf g(X_{\xi}(t))$;
		\item at the fission time of node $v$ on the spine, the single spine particle is replaced by $1+A_v$ offspring, with $A_v$ being independent identically distributed with common distribution $\{\tilde{p}_k:k\geq 0 \}$, where $\tilde{p}_k = \frac{(k+1)p_k}{m+1}$;
		\item the spine is chosen uniformly from the $1+A_v$ offspring at the fission time of $v$;
		\item the remaining $A_v$ particles $O_v$ give rise to the independent subtrees $\{(\tau,B,\eta)_j^v\}$, $j\in O_v$, evolving as
		independent processes determined by the measure $\P_{X_v(d_v)}$ shifted to their point
		and time of creation.		
	\end{enumerate}
	Moreover, the measure $\P_x^{\lambda}$ defined by \eqref{meas_Q} satisfies
	\begin{equation}\label{P and tilde P}
	\P_x^{\lambda} = \widetilde{\P}_x^{\lambda} |_{\F}.
	\end{equation}

\subsection{Proof of Theorem \ref{thrm1}}	
	
\begin{proof}[Proof of Theorem \ref{thrm1}]
Let $\overline{W}(\lambda,x) = \limsup_{t\uparrow\infty} W_t(\lambda)$ so that $\overline{W}(\lambda,x) = W(\lambda,x) \; \P_x$-a.s.
 By \eqref{P and tilde P} and \cite[Theorem 5.3.3]{Durrett},
	\begin{align}
	\overline{W}(\lambda,x) = \infty, \;
\widetilde{\P}_x^{\lambda}\text{-a.s.}
\quad &\Longleftrightarrow \quad \overline{W}(\lambda,x) = 0, \; \P_x\text{-a.s.}\\
	\overline{W}(\lambda,x) < \infty, \;
\widetilde{\P}_x^{\lambda}\text{-a.s.}
\quad &\Longleftrightarrow \quad \int \overline{W}(\lambda,x)\mathbb{d}\P_x =1.
	\end{align}
	
	(i) When $\lambda>\lambda^*$, we have
	\begin{equation}
	W_t(\lambda) \geq C \exp\left\{ -\lambda X_{\xi}(t) - \gamma(\lambda)t \right\} = C \exp\left\{ -\lambda t \left(\frac{X_{\xi}(t)}{t} + \frac{\gamma(\lambda)}{\lambda}\right) \right\}.
	\end{equation}
	By Lemmas \ref{lemma_slln} and \ref{lemma_com}, we have
	\begin{equation}
	\lim_{t\uparrow\infty} \frac{X_{\xi}(t)}{t} + \frac{\gamma(\lambda)}{\lambda} = -\gamma'(\lambda) + \frac{\gamma(\lambda)}{\lambda} < 0.
	\end{equation}
	Thus $\overline{W}(\lambda,x) = \infty$, $\widetilde{\P}_x^{\lambda}$-a.s. and hence $W(\lambda,x) = 0$, $\P_x$-a.s.
	
	(ii)
According to the paragraph before Lemma \ref{lemma_ldp},
under $\widetilde{\P}_x^{\lambda^*}$, we have $\frac{X_{\xi}(t)}{t} \rightarrow -\gamma'(\lambda^*)$ as $t\rightarrow\infty$.
Define hitting times:
\begin{equation}
T_k := \inf\left\{t\geq 0: X_{\xi}(t) \leq x-k \right\},\quad k\in\N.
\end{equation}
Then $T_k$ is a $\widetilde{\P}_x^{\lambda^*}$-almost surely finite stopping time. Thanks to the strong Markov property and 1-periodicity, $\{T_k-T_{k-1}\}_{k\geq 1}$ are independent and identically distributed. Moreover, $T_k\rightarrow\infty$ as $k\rightarrow\infty$, $\widetilde{\P}_x^{\lambda^*}$-almost surely, and
\begin{equation}
\widetilde{\P}_x^{\lambda^*} T_1= \lim_{k\rightarrow\infty} \frac{T_k}{k} = \lim_{k\rightarrow\infty} \frac{T_k}{x-X_{\xi}(T_k)} = \frac{1}{\gamma'(\lambda^*)}.
\end{equation}
It follows that $T_k - \frac{k}{\gamma'(\lambda^*)}$ is a mean zero (non-trivial) random walk. Thus
\begin{align*}
\liminf_{t\rightarrow\infty} \left(X_{\xi}(t) + \frac{\gamma(\lambda^*)}{\lambda^*}t\right) \leq &\liminf_{k\rightarrow\infty} \left(X_{\xi}(T_k) + \gamma'(\lambda^*)T_k\right)\\
=& \liminf_{k\rightarrow\infty} \left(\frac{x-k}{\gamma'(\lambda^*)} + T_k\right) \gamma'(\lambda^*)
= -\infty,\quad \widetilde{\P}_x^{\lambda^*}\mbox{-a.s.}
\end{align*}
Thus,
$\overline{W}(\lambda^*,x) = \infty$, $\widetilde{\P}_x^{\lambda^*}$-a.s. and consequently $W(\lambda^*,x) = 0$, $\P_x$-a.s.
	
    (iii) The proof of this part is similar to that of
    \cite[Theorem 1 (iii)]{Ky}.
    Suppose that $\lambda\in [0,\lambda^*)$. Then we have
    \begin{equation}\label{limit-spine}
    \lim_{t\rightarrow\infty} \frac{X_{\xi}(t)}{t} + \frac{\gamma(\lambda)}{\lambda} = -\gamma'(\lambda) + \frac{\gamma(\lambda)}{\lambda} > 0,
    \end{equation}
    in other words, $\{X_{\xi}(t) + \frac{\gamma(\lambda)}{\lambda}t \}$ is a diffusion with strictly positive drift.
   (When $\lambda = 0$, $\{\lambda X_{\xi}(t) +\gamma(\lambda)t\} = \{\gamma(0)t\} $
   is a deterministic drift to the right and can be regarded as a degenerate diffusion. In the case, the proof below still works.)

    Suppose $\mathbf E(L\log^+L)=\infty$. Let $\{d_{\xi_i}:i\geq 0 \}$ be the fission times along the spine. Note that
	\begin{equation}\label{domi-below}
	W_{d_{\xi_k}}(\lambda) \geq C A_{\xi_k} \exp \left\{ -\lambda\left(X_{\xi}(d_{\xi_k}) + \frac{\gamma(\lambda)}{\lambda}d_{\xi_k} \right) \right\},
	\end{equation}
	where $\{A_{\xi_k}: k\geq0\}$ are iid with distribution $\{\tilde{p}_k, k\geq 0\}$. The  assumption $\mathbf E(L\log^+L)=\infty$ implies that $\widetilde{\P}_x^{\lambda} \log^+A_{\xi_k} =\infty$, and thus
    $\underset{k \rightarrow \infty}{\limsup } \; k^{-1} \log A_{\xi_k}=\infty$,
    $\widetilde{\P}_x^{\lambda}$-a.s.
	By \eqref{limit-spine} and \eqref{domi-below}, $\overline{W}(\lambda,x) = \infty$, $\widetilde{\P}_x^{\lambda}$-a.s. and hence $W(\lambda,x) = 0$, $\P_x$-a.s.
	
    Suppose $\mathbf E(L\log^+L)<\infty$. Let  $\widetilde{\mathcal{G}}$
    to be the $\sigma$-field generated by the motion of the spine and the genealogy along the spine. By the spine decomposition and the martingale property of $W_t(\lambda)$, we have
	\begin{equation}
	\widetilde{\P}_x^{\lambda} \left( W_t(\lambda)\; |\; \widetilde{\mathcal{G}} \right) = \sum_{i=1}^{n_t} A_{\xi_{i-1}} e^{-\lambda X_{\xi}(d_{\xi_{i-1}}) - \gamma(\lambda) d_{\xi_{i-1}}} \psi(X_{\xi}(d_{\xi_{i-1}}),\lambda) + e^{-\lambda X_{\xi}(t) - \gamma(\lambda)t} \psi(X_{\xi}(t),\lambda).
	\end{equation}
    The  assumption $\mathbf E(L\log^+L)<\infty$ implies that $\widetilde{\P}_x^{\lambda} \log^+A_{\xi_k} <\infty$, and thus
    $$\underset{k \rightarrow \infty}{\limsup } \; k^{-1} \log A_{\xi_k}=0.$$
    Since $\psi$  is bounded, by \eqref{limit-spine}, we have
	\begin{equation*}
	\limsup_{t\uparrow\infty} \widetilde{\P}_x^{\lambda} \left( W_t(\lambda)\; |\; \widetilde{\mathcal{G}} \right) < \infty \quad \widetilde{\P}_x^{\lambda}\text{-a.s.}
	\end{equation*}
      Hence $\liminf_{t\uparrow\infty} W_t(\lambda) < \infty$ $\widetilde{\P}_x^{\lambda}$-a.s.
	By \cite{HR09} and \eqref{P and tilde P}, $W_t(\lambda)^{-1}$ is a non-negative $\widetilde\P_x^{\lambda}$-supermartingale, which implies that the limit of  $W_t(\lambda)^{-1}$ exists  as $t\to\infty$ $\widetilde\P_x^{\lambda}$-a.s. Hence
	$\lim_{t\uparrow\infty} W_t(\lambda) < \infty$ $\widetilde{\P}_x^{\lambda}$-a.s.
	Therefore for $\lambda\in [0,\lambda^*)$ and $\mathbf E(L\log^+L)<\infty$, $W(\lambda,x)$ is a $L^1(\P_x)$-limit.	
\end{proof}

\section{Proof of Theorem \ref{thrm2}}

The martingale $\{V_t^x(\lambda) \}$ will play an important role in the proof of the following result.

\begin{prop}\label{lemma_deri_conver}
	Suppose that $\lambda\geq \lambda^*$. Then $\partial W(\lambda,y) = \lim_{t\uparrow\infty} \partial W_t(\lambda)$ exists $\P_y$-almost surely in $[0,\infty)$. Furthermore, $\P_y(\partial W(\lambda,y)=0) = 0$ or $1$.
\end{prop}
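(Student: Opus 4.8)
The plan is to follow the argument of Kyprianou \cite{Ky} for the classical derivative martingale, using the non‑negative martingale $V_t^x(\lambda)$ of Lemma \ref{lemma_Vt} in place of its classical analogue and the $h$‑barrier $\mathbf{\Gamma}^{(-x,\lambda)}$ in place of a straight line. Fix $y$ and take $x$ large enough that $y>h^{-1}(-x)$, so that $\{V_t^x(\lambda),\P_y\}$ is a non‑negative martingale and hence converges $\P_y$‑a.s.\ to some $V^x(\lambda)\in[0,\infty)$. Let $\Theta_x:=\{\mathbf C(-x,\lambda)=\emptyset\}$ be the event that no line of descent ever meets $\mathbf{\Gamma}^{(-x,\lambda)}$. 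Since the ancestor starts strictly above the barrier, on $\Theta_x$ every ancestral path stays above it, so $\widetilde{N}_t^x=N_t$ for all $t$ and therefore $V_t^x(\lambda)=\partial W_t(\lambda)+xW_t(\lambda)$ for all $t$ on $\Theta_x$. Because $\lambda\ge\lambda^*$, Theorem \ref{thrm1}(i)--(ii) gives $W_t(\lambda)\to W(\lambda,y)=0$ $\P_y$‑a.s., so on $\Theta_x$ we obtain $\partial W_t(\lambda)=V_t^x(\lambda)-xW_t(\lambda)\to V^x(\lambda)\in[0,\infty)$. The events $\Theta_x$ increase with $x$ and the limits $V^x(\lambda)$ agree on the overlaps, so the $\P_y$‑a.s.\ existence of $\partial W(\lambda,y):=\lim_t\partial W_t(\lambda)$ in $[0,\infty)$ reduces to proving that $\P_y(\Theta_x)\to1$ as $x\to\infty$.

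This estimate --- that the barrier is hit with vanishing probability --- is the main obstacle, and it is where the identities $\gamma(\lambda^*)=\lambda^*\nu^*$ and $\gamma'(\lambda)\ge\gamma'(\lambda^*)=\nu^*$ (Lemmas \ref{lemma_gamma}, \ref{lemma_com}) do the work. Writing $K:=\sup_z|\psi_\lambda(z,\lambda)/\psi(z,\lambda)|<\infty$, one has $|h(z)-z|\le K$ and hence $h^{-1}(w)\le w+K$, so the curved barrier lies below the straight line $\{z=-\nu^* s-(x-K)\}$; therefore $\Theta_x^c\subseteq\{\mathbf C_a\ne\emptyset\}$ where $a=x-K$ and $\mathbf C_a$ is the collection obtained by stopping each line of descent at the first time $\sigma_u$ it reaches $-\nu^*\sigma_u-a$. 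On $\mathbf C_a$ one has $X_u(\sigma_u)=-\nu^*\sigma_u-a$, so $e^{-\gamma(\lambda^*)\sigma_u-\lambda^*X_u(\sigma_u)}\psi(X_u(\sigma_u),\lambda^*)=e^{\lambda^* a}\psi(X_u(\sigma_u),\lambda^*)$ by the cancellation $\gamma(\lambda^*)=\lambda^*\nu^*$. Stopping the non‑negative additive martingale $\{W_t(\lambda^*)\}$ along this barrier (using the strong Markov branching property, exactly as in the proof of Lemma \ref{lemma_Vt}) and applying monotone convergence gives
\[
e^{\lambda^* a}\Big(\inf_z\psi(z,\lambda^*)\Big)\,\E_y|\mathbf C_a|
\ \le\ \E_y\Big[\textstyle\sum_{u\in\mathbf C_a}e^{-\gamma(\lambda^*)\sigma_u-\lambda^*X_u(\sigma_u)}\psi(X_u(\sigma_u),\lambda^*)\Big]
\ \le\ e^{-\lambda^* y}\psi(y,\lambda^*),
\]
whence $\P_y(\Theta_x^c)\le\E_y|\mathbf C_{x-K}|\le C\,e^{-\lambda^*(x-K+y)}\to0$. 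No moment assumption on $L$ is used, which is why the statement is unconditional for $\lambda\ge\lambda^*$; this proves the first assertion.

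For the zero‑one law, decompose at a fixed time $s$: as in the proof that $\{\partial W_t(\lambda)\}$ is a martingale, $\partial W_t(\lambda)=e^{-\gamma(\lambda)s}\sum_{v\in N_s}\big(\partial W^{(v)}_{t-s}(\lambda)+\gamma'(\lambda)s\,W^{(v)}_{t-s}(\lambda)\big)$, with $W^{(v)},\partial W^{(v)}$ the additive and derivative martingales of the subtree rooted at $v$; letting $t\to\infty$ and using $W^{(v)}(\lambda)=0$ (Theorem \ref{thrm1}) gives $\partial W(\lambda,y)=e^{-\gamma(\lambda)s}\sum_{v\in N_s}\partial W^{(v)}(\lambda)$, a finite sum of conditionally (given $\F_s$) independent non‑negative terms with $\partial W^{(v)}(\lambda)$ distributed as $\partial W(\lambda,X_v(s))$ under $\P_{X_v(s)}$. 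Hence, with $g(z):=\P_z(\partial W(\lambda,z)=0)$, one has $\{\partial W(\lambda,y)=0\}=\bigcap_{v\in N_s}\{\partial W^{(v)}(\lambda)=0\}$ and $g(y)=\E_y\big[\prod_{v\in N_s}g(X_v(s))\big]$, and $g$ is $1$‑periodic by the distributional identity $\partial W(\lambda,z+k)\overset{d}{=}e^{-\lambda k}\partial W(\lambda,z)$, $k\in\mathbb Z$ (which again uses $W(\lambda,z)=0$). If $M:=\sup_{[0,1]}g<1$, then since $\P_z(|N_1|\ge2)\ge\delta>0$ uniformly in $z$ (from $\mathbf g\ge\alpha>0$ and $p_0<1$) and $g\le M$ everywhere by periodicity, $g(y)\le\E_y[M^{|N_1|}]\le M\big(1-\delta(1-M)\big)<M$, a contradiction; so $M\in\{0,1\}$. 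If $M=0$ then $g\equiv0$. If $M=1$, retaining in the product only the factor from one distinguished lineage (whose position is a Brownian motion) gives $g(y)\le\Pi_y[g(B_1)]\le\cdots\le\Pi_y[g(B_n)]\to\int_0^1 g(z)\,dz$, so $g\le\int_0^1 g$ everywhere, forcing $g$ to be constant and then $g\equiv1$ (since the law of $X_v(1)$ is absolutely continuous). In all cases $\P_y(\partial W(\lambda,y)=0)\in\{0,1\}$.
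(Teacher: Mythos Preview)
Your argument is correct and follows the same overall architecture as the paper --- reduce to the non-negative martingale $V_t^x(\lambda)$ on the barrier event, show the barrier event has full probability in the limit, then run a branching decomposition for the zero--one law --- but you handle the two key technical steps differently, and the differences are worth recording.

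For $\P_y(\Theta_x)\to1$, the paper argues qualitatively: since $W(\lambda^*,y)=0$ $\P_y$-a.s.\ (Theorem~\ref{thrm1}(ii)), one has $e^{-\gamma(\lambda^*)t-\lambda^* m_t}\to0$ and hence $m_t+\nu^* t\to\infty$, so $\inf_t\{m_t+\gamma'(\lambda)t\}>-\infty$ a.s.\ and the barrier is missed for $x$ large. Your route is a quantitative first-moment estimate: dominate the curved barrier by the straight line $-\nu^*s-(x-K)$, stop the additive martingale $W_t(\lambda^*)$ along that line, and use the cancellation $\gamma(\lambda^*)=\lambda^*\nu^*$ to get $\E_y|\mathbf C_{x-K}|\le Ce^{-\lambda^*(x-K+y)}$. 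This is slightly more self-contained (it uses only that $W_t(\lambda^*)$ is a mean-preserving martingale, not its a.s.\ limit) and gives an explicit exponential rate; the paper's version is shorter once Theorem~\ref{thrm1}(ii) is in hand.

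For the zero--one law, the paper shows directly that $p(z):=\P_z(\partial W(\lambda,z)=0)$ is constant by a hitting-time comparison (for $y,z\in[0,1)$, the first time some particle reaches $z$ is a.s.\ finite, giving $p(y)\ge p(z)$ and, by symmetry, equality), and then $p=\E_y p^{|N_s|}$ forces $p\in\{0,1\}$. Your argument instead bounds $g$ by its supremum, uses the uniform positivity of $\P_z(|N_1|\ge2)$ to rule out $0<M<1$, and in the case $M=1$ averages via $g(y)\le\Pi_y[g(B_n)]\to\int_0^1 g$ to force $g\equiv1$ a.e.\ and then everywhere by absolute continuity of $X_v(1)$. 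Both are valid; the paper's hitting-time step is shorter and avoids the a.e./everywhere upgrade, while yours makes the role of irreducibility (Brownian ergodicity on the circle) more explicit.
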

\begin{proof}
Let $x\in \R$ be such that $y>h^{-1}(-x)$.
Since $V_t^x(\lambda)$ is a non-negative martingale, it has an almost sure limit. Let $\gamma^{(-x,\lambda)}$ denote the event that the BBMPE remains entirely to the right of
$\mathbf{\Gamma}^{(-x,\lambda)}$.
On this event we have $V_t^x(\lambda) = \partial W_t(\lambda) + xW_t(\lambda)$. Hence, on $\gamma^{(-x,\lambda)}$, $\lim_{t\uparrow\infty}(\partial W_t(\lambda) + xW_t(\lambda))$ exists and equals $\lim_{t\uparrow\infty} V_t^x(\lambda)\geq 0$. Note that when $\lambda\geq \lambda^*$ we have $W(\lambda,y) = 0$
$\P_y$-almost surely.
Therefore, we have $\lim_{t\uparrow\infty} V_t^x(\lambda) = \lim_{t\uparrow\infty} \partial W_t(\lambda)$ on $\gamma^{(-x,\lambda)}$.

Let $m_t := \min\{X_u(t): u\in N_t \}$. Thus
\begin{align*}
W_t(\lambda) = e^{-\gamma(\lambda)t} \sum_{u\in N_t} e^{-\lambda X_u(t)} \psi(X_u(t),\lambda) \geq c
e^{-\gamma(\lambda)t-\lambda m_t},
\end{align*}
where $c=\min_{z\in[0,1]} \psi(z,\lambda)$ is a positive constant. Since $W(\lambda^*,y)=0$, we have\\ $\lim_{t\uparrow\infty}
e^{-\gamma(\lambda^*)t-\lambda^* m_t} = 0$,
that is, $\lim_{t\uparrow\infty} m_t + \gamma'(\lambda^*)t = \infty$.
Hence $\inf_{t\geq 0} \{m_t+\gamma'(\lambda)t \} > -\infty$
$\P_y$-almost surely for all $\lambda\geq \lambda^*$. Therefore
\begin{equation*}
\P_y(\gamma^{(-x,\lambda)}) \geq \P_y\left(\inf_{t\geq 0}
\{m_t+\gamma'(\lambda)t \} >
-x + \max_{z\in[0,1]} \frac{\psi_{\lambda}(z,\lambda)}{\psi(z,\lambda)} \right) \uparrow 1 \quad \text{ as } x\uparrow\infty.
\end{equation*}
Now the existence of an almost sure limit for the derivative martingale in $[0,\infty)$ has been established.

It remains to prove that $\partial W(\lambda,y)$ is either strictly positive or zero with probability one.
Noticing that
\begin{align*}\partial W_t(\lambda) = &e^{-\gamma(\lambda)s} \sum_{u\in N_s} e^{-\gamma(\lambda)(t-s)} \sum_{v\in N_t, v>u} e^{-\lambda X_v(t)} \\
&\times\left(\psi(X_u(t),\lambda)(\gamma'(\lambda)(t-s)+X_u(t)) - \psi_{\lambda}(X_u(t),\lambda) + \gamma'(\lambda) s \psi(X_u(t),\lambda) \right),
\end{align*}
we have under $\P_y$,
\begin{equation}\label{decomp_deriv_t}
\partial W_t(\lambda) \overset{d}{=} e^{-\gamma(\lambda)s} \sum_{u\in N_s} \left( \partial W_{t-s}^{(u)}(\lambda,X_u(s)) + \gamma'(\lambda)s W_{t-s}^{(u)}(\lambda,X_u(s)) \right),
\end{equation}
where $W_{t-s}^{(u)}(\lambda, X_u(s))$ and $\partial W_{t-s}^{(u)}(\lambda, X_u(s))$ are the
additive and derivative martingales for the BBMPE starting from $X_u(s)$,
and given $\F_s$, $\{(W_{t-s}^{(u)}(\lambda, X_u(s)), \partial W_{t-s}^{(u)}(\lambda, X_u(s))): u\in N_s\}$ are independent.
Letting $t\rightarrow\infty$ and noticing that $W(\lambda,y) = 0$ $\P_y$-almost surely for $\lambda\geq \lambda^*$, we have under $\P_y$
\begin{equation}\label{decomp_deriv}
\partial W(\lambda,y) \overset{d}{=} e^{-\gamma(\lambda)s} \sum_{u\in N_s} \partial W^{(u)}(\lambda,X_u(s)),
\end{equation}
where $\partial W^{(u)}(\lambda,X_u(s))$ is the limit of the derivative martingale for the BBMPE starting from $X_u(s)$.
Define
\begin{equation}\label{def_px_deri}
p(y) := \P_y (\partial W(\lambda,y) = 0),\quad y\in \R.
\end{equation}
Using the 1-periodicity of  $\mathbf g(\cdot)$ and $\psi(\cdot,\lambda)$, and the fact $W(\lambda,y) = 0$ $\P_y$-almost surely for $\lambda\geq \lambda^*$, we get that for $y-z\in\mathbb{Z}$,
\begin{equation}\label{mart_deriv_periodic}
(\partial W(\lambda,y), \P_y) \overset{d}{=} (e^{-\lambda(y-z)} \partial W(\lambda,z), \P_z).
\end{equation}
Thus, it suffices to verify $p(y) = 0$ or $1$ for any $y\in [0,1)$. For any $z\in [0,1)$, we define the following two stopping times with respect to BBMPE:
\begin{align*}
\tau_z^B &:= \inf\{t\geq 0: B_t = z \},\\
\tau_z^Z &:= \inf\{t\geq 0: \exists u\in N_t \; s.t. \; X_u(t) = z \}.
\end{align*}
At  each fission time, there is at least one offspring. As a result, for any $M\geq 0$ and $y\in [0,1)$,
\begin{equation*}
\P_y (\tau^Z_z \leq M) \geq  \Pi_y (\tau_z^B \leq M).
\end{equation*}
By the decomposition \eqref{decomp_deriv} and the strong Markov property, we have
\begin{equation}
p(y) \geq p(z) \P_y (\tau^Z_z \leq M) \geq p(z) \Pi_y (\tau_z^B \leq M).
\end{equation}
Since  $\Pi_y(\tau_z^B<\infty)=1$, letting $M\rightarrow\infty$, we obtain $p(y) \geq p(z)$. By symmetry, we get
\begin{align}
\P_y (\partial W(\lambda,y) = 0) = \P_z (\partial W(\lambda,z) = 0),\quad \forall y,z\in\R.
\end{align}
Let $p$ denote this common value. It follows from
\eqref{decomp_deriv} that $p$ satisfies $p=\E_y p^{|N_s|}$ for any $s>0$.
Since $|N(s)|\geq 1$ almost surely and $\P_y(|N(s)|>1)>0$,  $p=\E_y p^{|N_s|}$
implies $p=0$ or $1$. This completes the proof.
\end{proof}

\subsection{Measure change by $V$}
We have shown in Lemma \ref{lemma_Vt} that,
for any $x, y\in \R$ with $y>h^{-1}(-x)$, $\{V_t^x(\lambda), t\geq 0; \P_y\}$ is a martingale.
We now assume $x, y\in \R$ with $y>h^{-1}(-x)$ and use $V_t^x(\lambda)$
to define a probability measure $\P_y^{(x,\lambda)}$ on $(\mathcal{T}. \mathcal{F})$:
\begin{equation}\label{meas_change_V}
\frac{\mathrm{d}\P_y^{(x,\lambda)}}{\mathrm{d}\P_y}\bigg{|} _{\mathcal{F}_t} = \frac{V_t^x(\lambda)}{V_0^x(\lambda)}.
\end{equation}
According to
\cite{HH09} or \cite{RS},
there exists a probability measure $\widetilde{\P}_y^{(x,\lambda)}$ on $(\widetilde{\mathcal{T}}, \widetilde{\F})$ such that
\begin{equation}\label{V-tildeF}
\P_y^{(x,\lambda)} = \widetilde{\P}_y^{(x,\lambda)}|_{\F},
\end{equation}
 and under $\widetilde{\P}_y^{(x,\lambda)}$:
\begin{enumerate}[(i)]
	\item the ancestor starts from $y$  and the spine $\xi$ moves according
    to $\Pi_y^{(x,\lambda)}$, which implies that
    \begin{equation}\label{e:R_t}
    R_t := -h(y)+\gamma'(\lambda)T(t) + h(X_{\xi}(T(t)) ),
    \end{equation}	
    then $\left\{x+h(y)+R_t : \, t\geq 0 \right\}$
    is a standard Bessel-3 process started at $x+h(y)$,
    where $T(t) = \inf\{s\geq 0: \langle M \rangle_s > t \}$ and $\langle M \rangle_t =
    \int_0^t [h'(X_{\xi}(s))]^2 \mathrm{d}s$;
 	\item given the trajectory $X_{\xi}$ of the spine, the branching rate is given by $(m+1)\mathbf g(X_{\xi}(t))$;
	\item at the fission time of node $v$ on the spine, the single spine particle is replaced by $1+A_v$ offspring, with $A_v$ being independent identically distributed as $\{\tilde{p}_k:k\geq 0 \}$, where $\tilde{p}_k = \frac{(k+1)p_k}{m+1}$;
	\item the spine is chosen uniformly from the $1+A_v$ offspring at the fission time of $v$;
	\item the remaining $A_v$ particles $O_v$ give rise to the independent subtrees $\{(\tau,B,\eta)_j^v\}$, $j\in O_v$, and they evolve as
	independent processes determined by the measure $\P_{X_v(d_v)}$ shifted to their point
	and time of creation.
\end{enumerate}

\subsection{Proof of Theorem \ref{thrm2}}
Assume $x, y\in \R$ satisfy $y>h^{-1}(-x)$.
Let $V^x(\lambda) = \limsup_{t\uparrow\infty} V_t^x(\lambda)$ and using the fundamental measure theoretic result again, we have
\begin{align}
V^x(\lambda) = \infty, \;
\widetilde{\P}_y^{(x,\lambda)}\text{-a.s.}
\quad &\Longleftrightarrow \quad V^x(\lambda) = 0, \; \P_y\text{-a.s.}
\label{e:new1}\\
V^x(\lambda) < \infty, \;
\widetilde{\P}_y^{(x,\lambda)}\text{-a.s.}
\quad &\Longleftrightarrow \quad \int V^x(\lambda) \mathrm{d}\P_y =1.
\label{e:new2}
\end{align}

\begin{thrm}\label{thrm_V}
For $x,y\in\R$ satisfying $y>h^{-1}(-x)$,
the almost sure limit $V^x(\lambda)$ has the following properties:
\begin{enumerate}[(i)]
	\item If $\lambda > \lambda^*$ then $V^x(\lambda) = 0$ $\P_y$-almost surely.
	\item If $\lambda = \lambda^*$ then $V^x(\lambda) = 0$ $\P_y$-almost surely or is an $L^1(\P_y)$-limit according to $\mathbf E(L(\log^+L)^2) = \infty$ or $\mathbf E(L(\log^+L)^2) < \infty$.
	\item If $\lambda \in [0,\lambda^*)$ then $V^x(\lambda)=0$, $\P_y$-almost surely or is an $L^1(\P_y)$-limit according to $\mathbf E(L\log^+L) = \infty$ or $\mathbf E(L\log^+L) < \infty$.
\end{enumerate}
\end{thrm}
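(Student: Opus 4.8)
The plan is to combine the measure-theoretic dichotomy \eqref{e:new1}--\eqref{e:new2} with a spine decomposition of $V_t^x(\lambda)$ under $\widetilde\P_y^{(x,\lambda)}$, in the spirit of Kyprianou's treatment of classical BBM. Let $\widetilde{\mathcal G}$ be the $\sigma$-algebra generated by the spine trajectory $\{X_\xi(s):s\ge 0\}$, the fission times $\{d_{\xi_i}\}$ along the spine, and the offspring numbers $\{A_{\xi_i}\}$ along the spine. Using the description (i)--(v) following \eqref{V-tildeF} together with the identity $\E_z V_s^{x'}(\lambda)=e^{-\lambda z}\psi(z,\lambda)(x'+h(z))$ from the proof of Lemma \ref{lemma_Vt} (applied to the off-spine subtrees after the appropriate shift of the barrier in space and time), I would first establish
$$
\widetilde\P_y^{(x,\lambda)}\bigl(V_t^x(\lambda)\mid\widetilde{\mathcal G}\bigr)=\Phi(t)+\sum_{i:\,d_{\xi_i}\le t}A_{\xi_i}\,\Phi(d_{\xi_i}),\qquad
\Phi(r):=e^{-\lambda X_\xi(r)}\psi(X_\xi(r),\lambda)\,e^{-\gamma(\lambda)r}\bigl(x+\gamma'(\lambda)r+h(X_\xi(r))\bigr),
$$
the spine particle itself always belonging to $\widetilde N_t^x$ because $x+\gamma'(\lambda)r+h(X_\xi(r))$ equals the strictly positive Bessel-$3$ process $\rho_{\langle M\rangle_r}:=x+h(y)+R_{\langle M\rangle_r}$ of \eqref{e:R_t}. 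Writing $u=\langle M\rangle_r$ (so $c_1 r\le u\le c_2 r$ by Lemma \ref{lemma_mart_h}), and using that $h(z)-z$ is periodic hence bounded and that $\psi$ is bounded above and below by positive constants, one gets $X_\xi(r)=\rho_u-x-\gamma'(\lambda)r+O(1)$ and
$$
\Phi(r)\asymp \rho_u\,e^{-\lambda\rho_u}\,e^{(\lambda\gamma'(\lambda)-\gamma(\lambda))r},
$$
with constants depending only on $x,\lambda$.

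Part (i) is then immediate: for $\lambda>\lambda^*$, Lemma \ref{lemma_com}(3) gives $\lambda\gamma'(\lambda)-\gamma(\lambda)>0$, while $e^{-\lambda\rho_u}\ge e^{-o(r)}$ by the law of the iterated logarithm for Bessel-$3$, so $\Phi(r)\to\infty$ $\widetilde\P_y^{(x,\lambda)}$-a.s.; since $V_t^x(\lambda)\ge\Phi(t)$, we get $V^x(\lambda)=\infty$ $\widetilde\P_y^{(x,\lambda)}$-a.s. and hence $V^x(\lambda)=0$ $\P_y$-a.s. by \eqref{e:new1}. For (ii) and (iii) I would split on the moment condition. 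When it fails I use $V_{d_{\xi_i}}^x(\lambda)\ge A_{\xi_i}\Phi(d_{\xi_i})$ (all $A_{\xi_i}$ off-spine children born at $d_{\xi_i}$ lie above the barrier, being descendants of the spine), noting that conditionally on the spine path and fission times the $A_{\xi_i}$ are i.i.d. with the size-biased law $\tilde p_k=(k+1)p_k/(m+1)$ and independent of that path, and that $u_i:=\langle M\rangle_{d_{\xi_i}}\asymp i$ since the spine fission rate lies between two positive constants. For $\lambda\in[0,\lambda^*)$, Lemma \ref{lemma_com}(2) gives $\Phi(d_{\xi_i})\ge e^{-ci+o(i)}$ while $\mathbf E(L\log^+L)=\infty$ forces $\limsup_i i^{-1}\log A_{\xi_i}=\infty$, so $A_{\xi_i}\Phi(d_{\xi_i})\to\infty$ along a subsequence; for $\lambda=\lambda^*$, Lemma \ref{lemma_com}(1) gives $\Phi(d_{\xi_i})\asymp\rho_{u_i}e^{-\lambda^*\rho_{u_i}}$ with $\rho_{u_i}$ at scale $\sqrt{u_i}\asymp\sqrt i$, and $\mathbf E(L(\log^+L)^2)=\infty$ makes $\sum_i\widetilde\P(\log^+A_{\xi_i}>c\sqrt i)$ diverge, so a conditional second Borel--Cantelli argument tuned to this $\sqrt{\cdot}$-scaling gives $\limsup_i A_{\xi_i}\Phi(d_{\xi_i})=\infty$. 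In all these cases $V^x(\lambda)=\infty$ $\widetilde\P_y^{(x,\lambda)}$-a.s., hence $V^x(\lambda)=0$ $\P_y$-a.s. by \eqref{e:new1}.

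When the moment condition holds, the goal is $\sum_{i\ge 1}A_{\xi_i}\Phi(d_{\xi_i})<\infty$ $\widetilde\P_y^{(x,\lambda)}$-a.s.; this forces $\liminf_t\widetilde\P_y^{(x,\lambda)}(V_t^x(\lambda)\mid\widetilde{\mathcal G})<\infty$, so by conditional Fatou $\liminf_t V_t^x(\lambda)<\infty$ $\widetilde\P_y^{(x,\lambda)}$-a.s.; and since $1/V_t^x(\lambda)$ is a nonnegative supermartingale under $\widetilde\P_y^{(x,\lambda)}$ (cf. \cite{HR09}, as in the proof of Theorem \ref{thrm1}(iii)), it converges, so $V_t^x(\lambda)$ converges $\widetilde\P_y^{(x,\lambda)}$-a.s. to a finite limit, and \eqref{e:new2} yields the $L^1(\P_y)$-convergence. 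For $\lambda\in[0,\lambda^*)$ the summability is routine: $\Phi(d_{\xi_i})$ decays exponentially (Lemma \ref{lemma_com}(2)) while $\mathbf E(L\log^+L)<\infty$ gives $\limsup_i i^{-1}\log A_{\xi_i}=0$ by Borel--Cantelli.

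The main obstacle is the summability $\sum_i A_{\xi_i}\Phi(d_{\xi_i})<\infty$ in the critical case $\lambda=\lambda^*$ under $\mathbf E(L(\log^+L)^2)<\infty$. Here $\Phi(d_{\xi_i})\asymp\rho_{u_i}e^{-\lambda^*\rho_{u_i}}$ is governed by a Bessel-$3$ sample at time $u_i\asymp i$, and one must show that its exponential smallness at the typical scale $\rho_{u_i}\asymp\sqrt i$ beats the size-biased tail of $A_{\xi_i}$, the second logarithmic moment being precisely the borderline that matches the $\sqrt{\cdot}$-spatial scaling of the spine conditioned to stay above the barrier. The delicate point is to control simultaneously the rare downward excursions of $\rho$ and the rare large values of $A_{\xi_i}$ without multiplying worst cases; I would organize this via Borel--Cantelli / three-series estimates along the level-crossing times of $\rho$ (where the expected number of spine fissions between levels $k$ and $k+1$ is $\asymp k$ and the weight is $\asymp k e^{-\lambda^* k}$), the argument being the exact analog, with the periodic correctors $\psi$ and $h$, of the critical-case computation of Kyprianou \cite{Ky} for classical BBM.
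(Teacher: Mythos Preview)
Your overall framework---the spine decomposition of $V_t^x(\lambda)$ under $\widetilde\P_y^{(x,\lambda)}$, the dichotomy \eqref{e:new1}--\eqref{e:new2}, the lower bound $V_t^x\ge\Phi(t)$, and the supermartingale argument for $1/V_t^x$---matches the paper, and parts (i) and (iii) are essentially the paper's proof.

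For part (ii) with $\mathbf E(L(\log^+L)^2)<\infty$ the technical execution differs. You propose Kyprianou's level-crossing argument; the paper instead follows Yang--Ren \cite{YR}: it splits $\sum_n A_{\xi_n}\Phi(d_{\xi_n})$ according to whether $A_{\xi_n}\le e^{hR_{v_n}}$ or not (for some fixed $h\in(0,\lambda^*)$), bounds the first piece in $\widetilde\P_y^{(x,\lambda)}$-expectation via an explicit calculation using the $3$-dimensional Brownian representation of the Bessel-$3$ process, and shows the second piece has only finitely many terms by a similar expectation bound. Both routes are valid.

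The genuine gap is your infinite-moment case $\mathbf E(L(\log^+L)^2)=\infty$ at criticality. You argue that $\sum_i\widetilde\P(\log^+A_{\xi_i}>c\sqrt i)=\infty$ and conclude $\limsup_i A_{\xi_i}\Phi(d_{\xi_i})=\infty$ by a ``conditional Borel--Cantelli tuned to the $\sqrt{\cdot}$-scaling''. This step is incomplete: what is needed is $\log A_{\xi_i}\gtrsim\lambda^*\rho_{u_i}$, but by the law of the iterated logarithm $\rho_{u_i}$ can be as large as $(1+o(1))\sqrt{2u_i\log\log u_i}$, and the random set of indices where $\log A_{\xi_i}>c\sqrt i$ (which is independent of $\rho$) need not meet the set where $\rho_{u_i}\le c'\sqrt i$. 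Replacing the random $\rho_{u_i}$ by a deterministic $c\sqrt i$ requires an occupation-time argument, not merely scale matching. The paper resolves this by viewing $\{(v_n,A_{\xi_n})\}$ as a Poisson random measure conditional on the spine path, reducing the question to showing
\[
\int_0^\infty\sum_k\tilde p_k\,\mathbf 1_{\{k(\tilde x+R_t)e^{-\lambda^* R_t}\ge M\}}\,\mathrm dt=\infty\quad\text{a.s.},
\]
and then using the Bessel-$3$ local-time relation $l_\infty^a\overset{d}{=}a\,l_\infty^1$ to convert this into $\sum_k\tilde p_k(\log^+k)^2=\infty$ (Lemma~\ref{Prob-Ac}, following \cite{YR}). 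Your level-crossing idea from the final paragraph is in fact the same occupation-time mechanism in disguise and would also prove the infinite-moment direction if you applied it there; but as written, your infinite-moment argument is not yet a proof.
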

\begin{proof} Recall that $\tilde{p}_k = (k+1)p_k/(m+1)$.
Suppose $q>0$. A simple calculation shows that, for any fixed $c>0$,
$\mathbf E(L(\log^+L)^q)<\infty$
 if and only if
\begin{equation*}
\sum_{n\geq 1}  \widetilde{\mathbf P} (\log L > cn^{1/q})<\infty,
\end{equation*}
where under $\widetilde{\mathbf P}$,
$L$ has distribution $\{\tilde{p}_k:k\geq 0 \}$. Therefore, if $\{A_n: n\geq 0 \}$ is a sequence of independent copies of $L$ under $\widetilde{\mathbf P}$, then by the Borel-Cantelli lemma,
\begin{align}
\underset{n \rightarrow \infty}{\limsup } \; n^{-1/q} \log A_{n}=\left\{\begin{array}{ll}
0 & \text { if } \mathbf E\left(L (\log^+ L)^q\right)<\infty, \\
\infty & \text { if } \mathbf E\left(L (\log^+ L)^q \right)=\infty
\end{array}\right.
\end{align}
$\widetilde{\mathbf P}$-almost surely.

($\mathrm{i}$) Suppose that $\lambda>\lambda^*$. By Lemma \ref{lemma_com}, $\gamma'(\lambda) > \gamma(\lambda)/\lambda$.
Then by the definition of $V_t^x(\lambda)$ in \eqref{mart_V},
\begin{align}
V_{T(t)}^x(\lambda)
\geq &e^{-\lambda\left(X_{\xi}(T(t)) + \frac{\gamma(\lambda)}{\lambda}T(t)\right)} \psi(X_{\xi}(T(t)),\lambda)
\left( x+\gamma'(\lambda)T(t)+h(X_{\xi}(T(t))) \right)\\
=& e^{-\lambda\left(X_{\xi}(T(t)) + \gamma'(\lambda)T(t)\right) + \lambda\left(\gamma'(\lambda)-\frac{\gamma(\lambda)}{\lambda}\right)T(t)} \psi(X_{\xi}(T(t)),\lambda)
\left(x+h(y)+R_t\right)\\
\geq &c(\lambda)
e^{-\lambda R_t + \lambda\left(\gamma'(\lambda)-\frac{\gamma(\lambda)}{\lambda}\right)T(t)}
\left(x+h(y)+R_t\right),
\end{align}
where the constant $c(\lambda) := e^{-\lambda h(y)} \inf_{x\in\R} \{e^{-\lambda(x-h(x))} \psi(x,\lambda) \}$.
Under $\widetilde{\P}_y^{(x,\lambda)}$, $\{x+h(y)+R_t\}$
is a Bessel-3 process, and so for any $\epsilon>0$, this process eventually grows no faster than $t^{1/2+\epsilon}$ and no slower than $t^{1/2-\epsilon}$.
By Lemma \ref{lemma_mart_h}, there exist two positive constants $c_1\leq c_2$ such that $\langle M\rangle_t \in [c_1t, c_2 t]$ and hence $\frac{t}{c_2}\leq T(t) \leq \frac{t}{c_1}$.
Combining these with $\gamma'(\lambda) > \gamma(\lambda)/\lambda$, we get that
\begin{align*}
V^x(\lambda) = \limsup_{t\uparrow\infty} V_{T(t)}^x(\lambda)
\ge c(\lambda)
\limsup_{t\uparrow\infty} e^{-c\lambda t^{1/2+\epsilon} + \lambda(\gamma'(\lambda)-\frac{\gamma(\lambda)}{\lambda})\frac{t}{c_2} } t^{1/2-\epsilon} = \infty , \quad
\widetilde{\P}_y^{(x,\lambda)}\text{-a.s.}
\end{align*}
Hence, by \eqref{e:new1}, $V^x(\lambda)=0$,  $\P_y$-almost surely.

($\mathrm{ii}$) Suppose that $\lambda=\lambda^*$
which, by Lemma \ref{lemma_com}, implies that $\gamma'(\lambda^*) = \gamma(\lambda^*)/\lambda^*$.
We first consider the case that $\mathbf E(L(\log^+L)^2) = \infty$.
Recall that $d_{\xi_k}$ is the death time of the particle $\xi_k$ on the spine and $1+A_{\xi_k}$
is the number of its offspring. We have
$$
V_{d_{\xi_n}}^x(\lambda^*)
\geq
A_{\xi_n} \left( x+\gamma'(\lambda^*)d_{\xi_n} +
h(X_{\xi}(d_{\xi_n})) \right) e^{-\gamma(\lambda^*)d_{\xi_n}-\lambda^*X_{\xi}(d_{\xi_n})} \psi(X_{\xi}(d_{\xi_n}),\lambda^*).
$$
We only need to prove that, $\widetilde{\P}_y^{(x,\lambda)}$-almost surely,
$$
\limsup_{n\rightarrow\infty} A_{\xi_n} \left( x+\gamma'(\lambda^*)d_{\xi_n} +
h(X_{\xi}(d_{\xi_n}))
\right) e^{-\gamma(\lambda^*)d_{\xi_n}-\lambda^*X_{\xi}(d_{\xi_n})} \psi(X_{\xi}(d_{\xi_n}),\lambda^*)
= +\infty.
$$
Define $v_n$ such that $T(v_n) = d_{\xi_n}$, that is, $\langle M \rangle_{d_{\xi_n}} = v_n$. Then
\begin{equation*}
x +h(y) + R_{v_n} = x+\gamma'(\lambda^*)d_{\xi_n} + h(X_{\xi}(d_{\xi_n})).
\end{equation*}
It suffices to show that for any $M>0$,
    $\widetilde{\P}_y^{(x,\lambda)}$-almost surely,
\begin{equation}
\sum_{n=0}^{+\infty} \mathbf{1}_{\left\{ A_{\xi_n}
(x +h(y)+ R_{v_n})
 e^{-\lambda^*\left( X_{\xi}(T(v_n))+\gamma'(\lambda^*)T(v_n) \right)} \psi(X_{\xi}(T(v_n)),\lambda^*) \geq M \right\}}
 = +\infty .
\end{equation}
Since $\inf_{z\in\R} \psi(z,\lambda^*)>0$, it suffices to show that for any $M>0$,
\begin{equation}\label{equ_sum_infty}
\sum_{n=0}^{+\infty} \mathbf{1}_{\left\{ A_{\xi_n}
(x + h(y) + R_{v_n})
e^{-\lambda^* R_{v_n}} \geq M \right\}} = +\infty \quad \widetilde{\P}_y^{(x,\lambda)}\text{-a.s.}
\end{equation}

Let $\mathcal{G}$ denote the $\sigma$-field generated by $X_{\xi}$ (the spatial path of the spine). For any set $B\in \mathcal{B}[0,+\infty)\times \mathcal{B}(\mathbb{Z}_+)$, define
\begin{equation}\label{PRM_varphi}
\varphi(B) := \# \{n:(v_n, A_{\xi_n})\in B \}.
\end{equation}
We first show that, conditioned on $\mathcal{G}$, $\varphi$ is a Poisson random measure on $[0,+\infty)\times \mathbb{Z}_+$ with intensity
$(m+1)\mathbf g(X_{\xi}(T(t)))\mathrm{d}T(t) \sum_{k\in\mathbb{Z}_+} \tilde{p}_k \delta_k(\mathrm{d}y)$.
For simplicity, given $\mathcal{G}$, put $f(t) = \langle M \rangle_t = \int_0^t \left[ h'(X_{\xi}(s)) \right]^2 \mathrm{d}s$.
Then, it is known that $f(t)$ is a strictly increasing $C^1$-function and $f'(t)\in [c_1,c_2]$. Hence $T(t) = f^{-1}(t)$ and $T'(t) \in [1/c_2, 1/c_1]$.
Define
\begin{equation}
\widetilde{\varphi}(B) := \# \{n:(d_{\xi_n}, A_{\xi_n})\in B \}.
\end{equation}
Using the spine decomposition, it is easy to show that, conditioned on $\mathcal{G}$, $\widetilde{\varphi}$ is a Poisson random measure on $[0,+\infty)\times \mathbb{Z}_+$ with intensity $(m+1)\mathbf g(X_{\xi}(t))\mathrm{d}t \sum_{k\in\mathbb{Z}_+} \tilde{p}_k \delta_k(\mathrm{d}y)$. Note that, given the spatial path of the spine, $f(t)$ is a deterministic increasing function and $v_n = f(d_{\xi_n})$. It is not difficult to verify $\varphi$ satisfies the definition of Poisson random measure. Moreover, for any
$D\in \mathcal{B}(Z_+)$, $\widetilde{\varphi}([0,t]\times D) = \varphi([0,f(t)]\times D)$.
By making the change of variables $s=T(u)$, we have
\begin{equation}
\int_0^t (m+1)\mathbf g(X_{\xi}(s)) \mathrm{d}s = \int_0^{f(t)} (m+1) \mathbf g(X_{\xi}(T(u))) \mathrm{d}T(u).
\end{equation}
Hence, conditioned on $\mathcal{G}$,
the intensity of $\varphi$ is
$$
(m+1)\mathbf g(X_{\xi}(T(t)))\mathrm{d}T(t) \sum_{k\in\mathbb{Z}_+} \tilde{p}_k \delta_k(\mathrm{d}y).
$$
Thus for any $t\in(0,+\infty)$, given $\mathcal{G}$,
\begin{equation}
N_t:=\#\left\{n:v_n\leq t, A_{\xi_n}
(x +h(y) + R_{v_n})
e^{-\lambda^* R_{v_n}} \geq M   \right\}
\end{equation}
is a Poisson random variable with parameter
\begin{equation}
\int_0^t (m+1)\mathbf g(X_{\xi}(T(s))) \sum_k \tilde{p}_k
\mathbf{1}_{\left\{k(x+h(y) + R_s)
e^{-\lambda^* R_s} \geq M \right\} } \mathrm{d}T(s).
\end{equation}
Since $\min_{z\in\R} \mathbf g(z) >0$ and $T'(t) \in [1/c_2,1/c_1]$,
to prove \eqref{equ_sum_infty}, it suffices to show that
\begin{align}\label{int=infty}
\int_0^{+\infty} (m+1) \sum_k \tilde{p}_k
\mathbf{1}_{\left\{k\left(x +h(y) + R_t\right)
 e^{-\lambda^* R_t} \geq M \right\} } \mathrm{d}t = +\infty,  \quad \widetilde{\P}_y^{(x,\lambda)}\text{-a.s.}
\end{align}
For any $c\in (0,+\infty)$,
put
\begin{equation}\label{def-Ac}
A_c:=\left\{\int_0^{+\infty} (m+1) \sum_k \tilde{p}_k
\mathbf{1}_{\left\{k\left(x +h(y) + R_t\right)
 e^{-\lambda^* R_t} \geq M \right\} } \mathrm{d}t  \leq c\right\}.
\end{equation}
Using arguments similar to those in the proof of
\cite[Theorem 1]{YR},
we get that $\widetilde{\P}_y^{(x,\lambda)}(A_c) = 0$  (see Lemma \ref{Prob-Ac} in the Appendix for a proof), which implies \eqref{int=infty} holds.
Therefore, we have $V^x(\lambda) = \infty, \; \widetilde{\P}_y^{(x,\lambda)}\text{-a.s.}$ Hence $V^x(\lambda) = 0$, $\P_y$-a.s.

Now we consider the case that $\mathbf E(L(\log^+L)^2) < \infty$.
Let $\widetilde{\mathcal{G}}$ be the $\sigma$-field generated by the motion of the spine and the genealogy along the spine
($\mathcal{G}\subset \widetilde{\mathcal{G}}$).
Recall that $\{n_t: t\geq 0 \}$ is the counting process of fission points along the spine. Using the spine decomposition and the martingale property of $V_t^x(\lambda^*)$, we have
\begin{align*}
&\widetilde{\P}_y^{(x,\lambda)}  (V_t^x(\lambda^*) | \widetilde{\mathcal{G}}) = \left(x+\gamma'(\lambda^*)t +
X_{\xi}(t) - \frac{\psi_{\lambda}(X_{\xi}(t),\lambda^*)}{\psi(X_{\xi}(t),\lambda^*)}
\right) e^{-\lambda^*X_{\xi}(t)-\gamma(\lambda^*)t}  \psi(X_{\xi}(t),\lambda^*)\\ &+\sum_{k=0}^{n_t-1} A_{\xi_{k}}\left(x+\gamma'(\lambda^*)d_{\xi_{k}} + X_{\xi}(d_{\xi_{k}}) - \frac{\psi_{\lambda}(X_{\xi}(d_{\xi_{k}}),\lambda^*)}{\psi(X_{\xi}(d_{\xi_{k}}),\lambda^*)} \right) e^{-\lambda^*X_{\xi}(d_{\xi_{k}})-\gamma(\lambda^*)d_{\xi_{k}}}  \psi(X_{\xi}(d_{\xi_{k}}),\lambda^*).
\end{align*}
Next we show that $\widetilde{\P}_y^{(x,\lambda)}$-almost surely,
\begin{align}\label{YR_5}
&\sum_{n=0}^{+\infty} A_{\xi_{n}}\left(x+\gamma'(\lambda^*)d_{\xi_{n}} + X_{\xi}(d_{\xi_{n}}) - \frac{\psi_{\lambda}(X_{\xi}(d_{\xi_{n}}),\lambda^*)}{\psi(X_{\xi}(d_{\xi_{n}}),\lambda^*)} \right) e^{-\lambda^*X_{\xi}(d_{\xi_{n}})-\gamma(\lambda^*)d_{\xi_{n}}}  \psi(X_{\xi}(d_{\xi_{n}}),\lambda^*)\\
& < +\infty.
\end{align}
Using an argument similar to the one above, it is equivalent to prove
\begin{equation}
\sum_{n=0}^{+\infty} A_{\xi_{n}} \left(x - \frac{\phi_{\lambda}(y,\lambda^*)}{\phi(y,\lambda^*)} + R_{v_n}\right) e^{-\lambda^* R_{v_n}} < +\infty, \quad \widetilde{\P}_y^{(x,\lambda)}\text{-a.s.}
\end{equation}
For simplicity, we use $\tilde x$
to denote $x - \frac{\phi_{\lambda}(y,\lambda^*)}{\phi(y,\lambda^*)}$ and we will show
\begin{equation}
\sum_{n=0}^{+\infty} A_{\xi_{n}} \left(\tilde x + R_{v_n}\right) e^{-\lambda^* R_{v_n}} < +\infty \quad \widetilde{\P}_y^{(x,\lambda)}\text{-a.s.}
\end{equation}
Choose any $h\in (0,\lambda^*)$,
\begin{align}\label{YR_15}
\sum_{n=0}^{+\infty} A_{\xi_{n}} \left(\tilde x + R_{v_n}\right) e^{-\lambda^* R_{v_n}} = &\sum_{n=0}^{+\infty} A_{\xi_{n}} \left(\tilde x + R_{v_n}\right) e^{-\lambda^* R_{v_n}} \textbf{1}_{\{A_{\xi_{n}}\leq e^{hR_{v_n}}\}} \\
&+ \sum_{n=0}^{+\infty} A_{\xi_{n}} \left(\tilde x + R_{v_n}\right) e^{-\lambda^* R_{v_n}} \textbf{1}_{\{A_{\xi_{n}}> e^{hR_{v_n}}\}} \notag\\
=&: \mathrm{I} + \mathrm{II}.
\end{align}
We will prove that both I and II are finite $\widetilde{\P}_y^{(x,\lambda)}$-almost surely.

Recall that $\varphi$ is defined by \eqref{PRM_varphi}. We rewrite I as
\begin{equation*}
\mathrm{I} = \int_{[0,+\infty)\times \mathbb{Z}_+} (\tilde x+R_s)y e^{-\lambda^* R_s} \mathbf{1}_{\{y\leq e^{hR_s} \}} \varphi(\mathrm{d}s \times \mathrm{d}y).
\end{equation*}
Since
$\widetilde{\P}_y^{(x,\lambda)}(\mathrm{I}) = \widetilde{\P}_y^{(x,\lambda)}(\widetilde{\P}_y^{(x,\lambda)}(I|\mathcal{G}))$,
by the compensation formula of Poisson random measures,
\begin{align*}
\widetilde{\P}_y^{(x,\lambda)}(\mathrm{I}) &= \widetilde{\P}_y^{(x,\lambda)}\left(\int_0^{+\infty} (m+1)\mathbf g(X_{\xi}(T(s)))(\tilde x+R_s) e^{-\lambda^*R_s} \sum_k \tilde{p}_k k \mathbf{1}_{\{k\leq e^{hR_s} \}} \mathrm{d}T(s) \right)\\
&\lesssim \widetilde{\P}_y^{(x,\lambda)}\left(\int_0^{+\infty} (\tilde x+R_s) e^{-\lambda^*R_s} \sum_k \tilde{p}_k k \mathbf{1}_{\{k\leq e^{hR_s} \}} \mathrm{d}s \right)\\
&\leq  \sum_k \tilde{p}_k \int_0^{+\infty} \widetilde{\P}_y^{(x,\lambda)}\left( (\tilde x+R_s) e^{-(\lambda^*-h)R_s} \mathbf{1}_{\{R_s\geq h^{-1}\log^+k \}} \right) \mathrm{d}s .
\end{align*}
In the display above and also in the sequel,
we write $A\lesssim B$
when there exists a constant $c>0$,  such that $A\leq cB$.
Under $\widetilde{\P}_y^{(x,\lambda)}$, $\tilde x+R_s$ is a Bessel-3 process, which has the same distribution as
$|W_t+\hat{x}|$ under $\mathbf{P}_w$,
where $(W_t,\mathbf{P}_w)$
is a 3-dimensional standard Brownian motion starting from 0 and $\hat{x}$ is a point in $\mathbb{R}^{3}$ with norm $\tilde x$. Thus
\begin{align*}
\widetilde{\P}_y^{(x,\lambda)}(\mathrm{I}) & \lesssim \sum_{k} \tilde{p}_{k} \int_{0}^{+\infty}
\mathbf{P}_w\left(\left|W_{s}+\hat{x}\right| e^{-(\lambda^*-h)\left|W_{s}+\hat{x}\right|} \mathbf{1}_{\left\{\left|W_{s}+\hat{x}\right| \geq h^{-1} \log ^{+} k+\tilde x\right\}}\right) \mathrm{d} s \\
& \lesssim \sum_{k} \tilde{p}_{k} \int_{\left\{|y+\hat{x}| \geq h^{-1} \log ^{+} k+\tilde x\right\}}|y+\hat{x}| e^{-(\lambda^*-h)|y+\hat{x}|} \mathrm{d} y \int_{0}^{+\infty} s^{-3 / 2} e^{-|y|^{2} / 2 \pi s} \mathrm{~d} s \\
&=\sum_{k} \tilde{p}_{k} \int_{\left\{|y+\hat{x}| \geq h^{-1} \log ^{+} k+\tilde x\right\}} \frac{|y+\hat{x}|}{|y|} e^{-(\lambda^*-h)|y+\hat{x}|} \mathrm{d} y \int_{0}^{+\infty} t^{-1 / 2} e^{-t / 2 \pi} \mathrm{d} t \\
& \lesssim \sum_{k} \tilde{p}_{k} \int_{\left\{|y+\hat{x}| \geq h^{-1} \log ^{+} k+\tilde x\right\}} \frac{|y+\hat{x}|}{|y|} e^{-(\lambda^*-h)|y+\hat{x}|} \mathrm{d} y \\
& \leq \sum_{k} \tilde{p}_{k} \int_{\left\{|y| \geq h^{-1} \log ^{+} k\right\}} \frac{|y|+\tilde x}{|y|} e^{-(\lambda^*-h)(|y|-\tilde x)} \mathrm{d} y.
\end{align*}
Using spherical coordinates in the last integral, we get
\begin{align}
\widetilde{\P}_y^{(x,\lambda)}(\mathrm{I}) &\lesssim \sum_{k} \tilde{p}_{k} \int_{h^{-1}\log^+k}^{+\infty} (r^2+\tilde xr) e^{-(\lambda^*-h)r} \mathrm{d}r < +\infty,
\end{align}
and therefore, $\widetilde{\P}_y^{(x,\lambda)}(\mathrm{I}<+\infty) = 1$.

On the other hand, similar calculation yields
\begin{align}
	&\widetilde{\P}_y^{(x,\lambda)} \left(\sum_{n=0}^{+\infty} \mathbf{1}_{\left\{A_{\xi_n}>e^{hR_{v_n}}\right\}}\right) \\
    =&(1+m) \sum_{k} \tilde{p}_{k} \widetilde{\P}_y^{(x,\lambda)} \left( \int_{0}^{+\infty}  \mathbf g(X_{\xi}(T(s))) \mathbf{1}_{\{\tilde x+R_s<h^{-1}\log^+ k + \tilde x \} }  \mathrm{d} T(s) \right)\\
	\lesssim &\sum_{k} \tilde{p}_{k} \int_{0}^{+\infty}
    \mathbf{P}_w\left(\left|W_{s}+\hat{x}\right|<h^{-1} \log ^{+} k+\tilde x\right) \mathrm{d} s \\
	\lesssim &\sum_{k} \tilde{p}_{k} \int_{\left(|y+\hat{x}|<h^{-1} \log ^{+} k+\tilde x\right\}} \mathrm{d} y \int_{0}^{+\infty} s^{-3 / 2} \mathrm{e}^{-\left|y\right|^{2} / 2 \pi s} \mathrm{d} s \\
	\lesssim &\sum_{k} \tilde{p}_{k} \int_{\left\{|y| \leq h^{-1} \log ^{+} k+2 \tilde x\right\}}|y|^{-1} \mathrm{~d} y \\
	\lesssim &\sum_{k} \tilde{p}_{k}\left(h^{-1} \log ^{+} k+2 \tilde x\right)^{2}.
\end{align}
The assumption that $\mathbf E(L(\log^+L)^2)<+\infty$ implies that $\sum_{k\in\mathbb{Z}_+} \tilde{p}_k (\log^+k)^2 < +\infty$, which implies that the right side of the last inequality is finite. Hence, $\sum_{n=0}^{+\infty} \mathbf{1}_{\left\{A_{\xi_n}>e^{hR_{v_n}}\right\}} < +\infty$, $\widetilde{\P}_y^{(x,\lambda)}$-almost surely,
that is, II is a sum of finitely many terms.
It follows that $\widetilde{\P}_y^{(x,\lambda)}(\mathrm{II}<+\infty) = 1$. Hence \eqref{YR_5} is valid, which implies that
\begin{equation}
\limsup_{t\uparrow\infty}  \widetilde{\P}_y^{(x,\lambda)} (V_t^x(\lambda^*)\, |\, \widetilde{\mathcal{G}}) < \infty \quad \widetilde{\P}_y^{(x,\lambda)}\text{-a.s.}
\end{equation}
By Fatou's lemma, $\liminf_{t \uparrow\infty} V_{t}^{x}(\lambda^*)<\infty$,  $\widetilde{\P}_y^{(x,\lambda)}$-a.s.
The Radon-Nikodym derivative \eqref{meas_change_V} and \eqref{V-tildeF} tells us that  $V_{t}^{x}(\lambda^*)^{-1}$ is a $\widetilde\P_y^{(x,\lambda)}$-supermartingale and therefore has a limit $\widetilde\P_y^{(x,\lambda)}$-almost surely.
It follows that
$$\limsup _{t \uparrow \infty} V_{t}^{x}(\lambda^*)=\liminf _{t \uparrow \infty} V_{t}^{x}(\lambda^*)<\infty,\quad \widetilde{\P}_y^{(x,\lambda)}\mbox{-a.s.}$$
Hence, by \eqref{e:new2},
$V^x(\lambda^*)$ is an $L^{1}(\P_y)$ limit when $\mathbf E(L(\log^+L)^2)<\infty$.

(iii) Now suppose $\lambda\in[0,\lambda^*)$ and $\mathbf E(L\log^+L) = \infty$.
By Lemma \ref{lemma_com}, $\gamma'(\lambda)<\gamma(\lambda)/\lambda$.
We have
\begin{align*}
V_{d_{\xi_n}}^x(\lambda) &\geq A_{\xi_n} \left( x+\gamma'(\lambda)d_{\xi_n} + h(X_{\xi}(d_{\xi_n}))
\right) e^{-\gamma(\lambda)d_{\xi_n}-\lambda X_{\xi}(d_{\xi_n})} \psi(X_{\xi}(d_{\xi_k}),\lambda)\\
&\gtrsim A_{\xi_n} \left( x - \frac{\phi_{\lambda}(y,\lambda)}{\phi(y,\lambda)} + R_{v_n} \right) e^{-\lambda R_{v_n} - \lambda(\frac{\gamma(\lambda)}{\lambda}-\gamma'(\lambda)) T(v_n)}.
\end{align*}
Using argument similar to part (ii), we have $V^x(\lambda) = \infty$, $\P_y^{(x,\lambda)}$-a.s. and hence $V^x(\lambda) = 0$ $\P_y$-a.s. We omit the details here.

When $\lambda\in[0,\lambda^*)$ and $\mathbf E(L\log^+L) < \infty$, using the spine decomposition, we have
\begin{align*}
&\widetilde{\P}_y^{(x,\lambda)}  (V_t^x(\lambda) | \widetilde{\mathcal{G}})\\
=& \left(x+\gamma'(\lambda)t + h(X_{\xi}(t)) \right) e^{-\lambda(X_{\xi}(t)+\gamma'(\lambda)t)} e^{-\lambda(\frac{\gamma(\lambda)}{\lambda}-\gamma'(\lambda)) t} \psi(X_{\xi}(t),\lambda)
\\ &+\sum_{k=0}^{n_t-1} A_{\xi_{k}}\left(x+\gamma'(\lambda)d_{\xi_{k}} +
h(X_{\xi}(d_{\xi_{k}}))\right)e^{-\lambda(X_{\xi}(d_{\xi_{k}})+\gamma'(\lambda)d_{\xi_{k}})-\lambda(\frac{\gamma(\lambda)}{\lambda}-\gamma'(\lambda)) d_{\xi_{k}}}\psi(X_{\xi}(d_{\xi_{k}}),\lambda).
\end{align*}
Using argument similar to  part (ii), we obtain $V_x(\lambda)$ is an $L^1(\P_y)$-limit. We omit the details here.
\end{proof}

\begin{proof}[Proof of Theorem \ref{thrm2}]
Suppose $\lambda\geq \lambda^*$.
The case $\lambda\leq -\lambda^*$ follows by symmetry.
For a given $x\in \R$, let $y\in \R$ be such that $y>h^{-1}(-x)$. By Proposition \ref{lemma_deri_conver}, we know that under $\P_y$, $V^x(\lambda) = \partial W(\lambda,y)$ on the event $\gamma^{(-x,\lambda)}$. And also
\begin{equation}
\P_y(\gamma^{(-x,\lambda)}) \rightarrow 1 \quad \mbox{as } x\rightarrow\infty.
\end{equation}
Combining these with Theorem \ref{thrm_V}, we get  $\partial W(\lambda,y) = 0$ $\P_y$-almost surely when $V^x(\lambda) = 0$ $\P_y$-almost surely.
It follows from Proposition \ref{lemma_deri_conver} that $\P_y(\partial W(\lambda,y)=0) = 0$ or $1$. Therefore, when $V^x(\lambda)$ is an $L^1(\P_y)$-limit, we have $\partial W(\lambda,y) \in (0,\infty)$. So Theorem \ref{thrm_V} implies Theorem \ref{thrm2}.
\end{proof}

\section{Proof of Theorem \ref{thrm3}}

It was proved analytically in \cite[Theorem 1.2]{BH} and \cite[Proposition 1.2]{H08} that pulsating travelling waves
exist if and only if $|\nu|\geq \nu^*$.
In this section, we will use probabilistic methods to prove the existence in the supercritical case $|\nu|>\nu^*$ and critical case $|\nu|=\nu^*$, and the nonexistence in the subcritical case $|\nu|<\nu^*$.

\subsection{Existence in the supercritical case ($\nu>\nu^*$)}\label{section_exist}

In this subsection, we consider the case $\nu>\nu^*$ and $\mathbf E(L\log^+L)<\infty$. By \eqref{gamma_range} and Lemma \ref{lemma_com}, $\frac{\gamma(\lambda)}{\lambda}$ strictly decreases from $+\infty$ to $\nu^*$ on $[0,\lambda^*]$. Therefore, for any $\nu>\nu^*$ there exists a unique $\lambda\in(0,\lambda^*)$ such that $\nu=\frac{\gamma(\lambda)}{\lambda}$.
Recall that the additive martingale $W_t(\lambda)$ is defined in \eqref{mart_add}. Using the periodicity of  $g(\cdot)$ and $\psi(\cdot,\lambda)$, we get that for $y-x\in\mathbb{Z}$,
\begin{equation}\label{mart_W_id1}
(W(\lambda,y), \P_y) \overset{d}{=}( e^{-\lambda(y-x)} W(\lambda,x), \P_x).
\end{equation}
Note that for any $t>s>0$,
\begin{equation*}
W_t(\lambda) = e^{-\gamma(\lambda)s} \sum_{u\in N_s} e^{-\gamma(\lambda)(t-s)}  \sum_{v\in N_t, v>u} e^{-\lambda X_v(t)} \psi(X_v(t),\lambda).
\end{equation*}
It is easy to see that, under $\P_x$,
\begin{equation}\label{mart_W_dec}
W_t(\lambda) \overset{d}{=} \sum_{u\in N_s} e^{-\gamma(\lambda)s} W_{t-s}^{(u)}(\lambda, X_u(s)),
\end{equation}
where $W_{t-s}^{(u)}(\lambda, X_u(s))$ is the additive martingale of the BBMPE
starting from $X_u(s)$, and given $\F_s$, $\{W_{t-s}^{(u)}(\lambda, X_u(s)), u\in N_s\}$ are independent. Hence, letting $t\rightarrow\infty$, we have under $\P_x$
\begin{equation}\label{mart_W_id2}
W(\lambda,x) \overset{d}{=} e^{-\gamma(\lambda)s} \sum_{u\in N_s} W^{(u)}(\lambda, X_u(s)),
\end{equation}
where $W^{(u)}(\lambda, X_u(s))$ is the limit of the additive martingale for the
BBMPE starting from $X_u(s)$, and given $\F_s$, $\{W^{(u)}(\lambda, X_u(s)): u\in N_s\}$ are independent.
\bigskip

\begin{thrm}\label{thrm_exist_super}
	Suppose $|\nu|>\nu^*$ and $\mathbf E(L\log^+L) < \infty$.
	Define
	\begin{equation}\label{def-u-supercritical}
	\mathbf u(t,x) := \E_x \exp\left\{-e^{\gamma(\lambda)t} W(\lambda,x) \right\},
	\end{equation}
	where $|\lambda| \in (0,\lambda^*)$ is such that $\nu = \frac{\gamma(\lambda)}{\lambda}$.
	Then $\mathbf u$ is a  pulsating travelling wave with speed $\nu$.
\end{thrm}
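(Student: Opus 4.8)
The plan is to check the three defining properties of a pulsating travelling wave for $\mathbf u(t,x)=\E_x\exp\{-e^{\gamma(\lambda)t}W(\lambda,x)\}$: that it solves \eqref{KPPeq2}, that it obeys the pulsating relation \eqref{pul_travel}, and that it has the prescribed limits at $\pm\infty$. Everything is driven by two structural facts about the additive martingale limit that are already available: the branching decomposition \eqref{mart_W_id2}, $W(\lambda,x)\overset{d}{=}e^{-\gamma(\lambda)s}\sum_{u\in N_s}W^{(u)}(\lambda,X_u(s))$ with the $W^{(u)}$ conditionally independent given $\F_s$ and $W^{(u)}(\lambda,y)$ distributed as $W(\lambda,y)$ under $\P_y$; and the shift identity \eqref{mart_W_id1}, $(W(\lambda,y),\P_y)\overset{d}{=}(e^{-\lambda(y-x)}W(\lambda,x),\P_x)$ for $y-x\in\mathbb{Z}$. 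Together with the $L^1$-convergence from part (iii) of Theorem \ref{thrm1} (so that $\E_xW(\lambda,x)=e^{-\lambda x}\psi(x,\lambda)>0$) and $\gamma(\lambda)>0$, these suffice.

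First I would derive the functional equation $\mathbf u(t+s,x)=\E_x\big[\prod_{u\in N_s}\mathbf u(t,X_u(s))\big]$, valid for all $t\in\R$ and $s\ge 0$: substituting \eqref{mart_W_id2} into the definition, the prefactor $e^{\gamma(\lambda)(t+s)}$ cancels $e^{-\gamma(\lambda)s}$, conditional independence splits the exponential into a product over $u\in N_s$, and each conditional factor is $\mathbf u(t,X_u(s))$. Taking $g:=\mathbf u(0,\cdot)$, a bounded measurable map into $(0,1]$, the case $t=0$ reads $\mathbf u(s,x)=\E_x\big[\prod_{u\in N_s}g(X_u(s))\big]$, so $\mathbf u$ is the McKean-type representation (cf. \cite{Mc,LTZ}) of the solution of \eqref{KPPeq2} with datum $g$. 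Conditioning on the first fission of the root produces the mild Feynman--Kac form of \eqref{KPPeq2}, and parabolic regularity ($\mathbf g\in C^1$, $\mathbf f$ analytic) then gives $\mathbf u\in C^{1,2}$ solving \eqref{KPPeq2} classically on $(0,\infty)\times\R$; the pulsating relation below (together with $1$-periodicity of the coefficients) extends this to all $t\in\R$. I expect this step — upgrading the probabilistic identity to a genuine classical solution in the presence of the variable rate $\mathbf g$ and a general offspring law — to be the main technical obstacle; the remaining arguments reduce to bookkeeping with the two identities above plus a compactness argument at the $-\infty$ boundary.

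For the pulsating relation, apply \eqref{mart_W_id1} with $y=x-1$: $(W(\lambda,x-1),\P_{x-1})\overset{d}{=}(e^{\lambda}W(\lambda,x),\P_x)$, so $\mathbf u(t,x-1)=\E_x\exp\{-e^{\gamma(\lambda)t+\lambda}W(\lambda,x)\}$; since $1/\nu=\lambda/\gamma(\lambda)$ we have $\gamma(\lambda)t+\lambda=\gamma(\lambda)(t+1/\nu)$, hence $\mathbf u(t,x-1)=\mathbf u(t+1/\nu,x)$, which is \eqref{pul_travel}. For the boundary values, write $x=n+r$ with $n\in\mathbb{Z}$ and $r\in[0,1)$; \eqref{mart_W_id1} gives $\mathbf u(t,n+r)=\E_r\exp\{-e^{\gamma(\lambda)t}e^{-\lambda n}W(\lambda,r)\}$. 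Because $\E_rW(\lambda,r)=e^{-\lambda r}\psi(r,\lambda)$ is bounded on $[0,1]$, the family $\{W(\lambda,r):r\in[0,1)\}$ is tight, so for $\lambda>0$ the exponent tends to $0$ uniformly in $r$ as $n\to+\infty$ and hence $\mathbf u(t,x)\to 1$ as $x\to+\infty$.

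It remains to treat $x\to-\infty$. First I would prove the zero--one law $\P_x(W(\lambda,x)=0)\in\{0,1\}$, independent of $x$, by repeating the argument from the proof of Proposition \ref{lemma_deri_conver} with $W$ in place of $\partial W$: using \eqref{mart_W_id2}, the strong Markov property at the first time some particle hits a fixed point $z$, and $1$-periodicity, one shows $q(x):=\P_x(W(\lambda,x)=0)$ does not depend on $x$; then $q=\E_x q^{|N_s|}$ together with $\P_x(|N_s|>1)>0$ forces $q\in\{0,1\}$, and $\E_xW(\lambda,x)>0$ (from part (iii) of Theorem \ref{thrm1}) rules out $q=1$, so $q\equiv 0$. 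Consequently, for each fixed $r\in[0,1)$ the map $s\mapsto\mathbf u(s,r)=\E_r\exp\{-e^{\gamma(\lambda)s}W(\lambda,r)\}$ is nonincreasing and converges, as $s\to+\infty$, to $\P_r(W(\lambda,r)=0)=0$; since $\mathbf u(s,\cdot)$ is continuous (by the PDE step) and $[0,1]$ is compact, Dini's theorem makes this convergence uniform in $r$. Iterating \eqref{pul_travel} gives $\mathbf u(t,x-n)=\mathbf u(t+n/\nu,x)$, so with $x=-n+r$ we obtain $\mathbf u(t,x)=\mathbf u(t+n/\nu,r)\le\sup_{r\in[0,1]}\mathbf u(t+n/\nu,r)\to 0$ as $x\to-\infty$. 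This verifies all requirements for $\nu>\nu^*$; the case $\nu<-\nu^*$ follows by the symmetry recorded after Theorem \ref{thrm3}.
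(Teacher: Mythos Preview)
Your proposal is correct and follows essentially the same route as the paper: both derive the semigroup/McKean identity from \eqref{mart_W_id2}, get the pulsating relation from \eqref{mart_W_id1} combined with $\nu=\gamma(\lambda)/\lambda$, and handle the boundary limits via the reduction to $r\in[0,1)$ together with the zero--one law $\P_x(W(\lambda,x)=0)=0$ (proved exactly as you outline). The only cosmetic differences are that the paper invokes Dini's theorem at the $+\infty$ end (where you use tightness of $\{W(\lambda,r):r\in[0,1)\}$) and is terser at the $-\infty$ end (where you spell out the Dini argument via monotonicity of $s\mapsto\mathbf u(s,r)$); both variants are valid and interchangeable.
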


\begin{proof}
We assume that $\lambda\geq 0$.  The case $\lambda < 0$ can be analyzed by symmetry.
By \eqref{mart_W_id2} and the Markov property,
we have, for any $t\geq s\geq 0$,
\begin{align*}
\mathbf u(t,x) &= \E_x \exp\left\{-e^{\gamma(\lambda)t} W(\lambda,x) \right\}
= \E_x \exp\bigg{\{}-e^{\gamma(\lambda)t} e^{-\gamma(\lambda)s} \sum_{u\in N_s} W^{(u)}(\lambda, X_u(s)) \bigg{\}}\\
&= \E_x \bigg{(} \E_x \bigg{[} \exp\big{\{}-e^{\gamma(\lambda)(t-s)}  \sum_{u\in N_s} W^{(u)}(\lambda, X_u(s)) \big{\}}   \;\bigg{|}\; \F_s   \bigg{]} \bigg{)}\\
&= \E_x \prod_{u\in N_s} \E_{X_u(s)} \exp\left\{-e^{\gamma(\lambda)(t-s)} W^{(u)}(\lambda, X_u(s)) \right\}\\
&= \E_x \prod_{u\in N_s} \mathbf u(t-s, X_u(s)).
\end{align*}
In particular, setting $s=t$, we have
\begin{equation}\label{u(0)}
\mathbf u(t,x) = \E_x \prod_{u\in N_t} \mathbf u(0, X_u(t)).
\end{equation}
An argument similar to the one used in \cite{Mc} shows that $\mathbf u(t,x)$ satisfies
\begin{equation*}
\frac{\partial \mathbf u}{\partial t} = \frac{1}{2} \frac{\partial^2 \mathbf u}{\partial x^2} +
\mathbf g\cdot(\mathbf f(\mathbf u)-\mathbf u).
\end{equation*}
Moreover, since  $\frac{\gamma(\lambda)}{\lambda}=\nu$, by \eqref{mart_W_id1},
we have
\begin{align*}
\mathbf u(t+\frac{1}{\nu},x) &= \E_x \exp\left\{-e^{\gamma(\lambda)(t+\frac{1}{\nu})} W(\lambda,x) \right\} = \E_x \exp\left\{-e^{\gamma(\lambda)t+\lambda} W(\lambda,x) \right\}\\
&= \E_{x-1} \exp\left\{-e^{\gamma(\lambda)t} W(\lambda,x-1) \right\} = \mathbf u(t,x-1).
\end{align*}

In order to prove that $\mathbf u(t,x)$ is a pulsating travelling wave, it remains to show that
\begin{equation}
\lim_{x\rightarrow-\infty} \mathbf u(t,x) = 0, \quad \lim_{x\rightarrow +\infty} \mathbf u(t,x) = 1.
\end{equation}
Let $\lfloor x \rfloor$ denote the integral part of $x$. By \eqref{mart_W_id1},
\begin{equation}
\lim_{x\rightarrow +\infty} \mathbf u(t,x) = \lim_{x\to\infty}\E_{x-\lfloor x \rfloor} \exp\left\{-e^{\gamma(\lambda)t} e^{-\lambda \lfloor x \rfloor} W(\lambda,x-\lfloor x \rfloor) \right\}.
\end{equation}
Since $\lim_{x\rightarrow +\infty} e^{-\lambda \lfloor x \rfloor} = 0$ and $y = x - \lfloor x \rfloor \in [0,1)$, we have
\begin{equation}
e^{-\lambda n} W(\lambda,y)\to 0,\quad \mbox{ $\P_y$-almost surely as } n\to\infty.
\end{equation}
It follows from the bounded dominated convergence theorem that for fixed $t\geq 0$,
\begin{equation}
\lim_{n\rightarrow\infty} \mathbf u(t,y+n) =  \E_y \lim_{n\rightarrow\infty} \exp\left\{-e^{\gamma(\lambda)t} e^{-\lambda n} W(\lambda,y) \right\} = 1.
\end{equation}
By Dini's theorem, we have
\begin{equation}
\lim_{x\rightarrow +\infty} \mathbf u(t,x) = 1.
\end{equation}
To prove $\lim\limits_{x\rightarrow-\infty} \mathbf u(t,x) = 0$, we need to verify that
\begin{equation}\label{e:new}
\P_x (W(\lambda,x) = 0) = 0,\quad x\in \R.
\end{equation}
Thanks to \eqref{mart_W_id1} and \eqref{mart_W_id2}, an argument similar to the one used in the proof of Proposition \ref{lemma_deri_conver} shows that $\P_x (W(\lambda,x) = 0) = \P_y (W(\lambda,y) = 0)$ for any $x,y\in\R$. Let $q$ denote this common value.
It follows from \eqref{mart_W_id2} that $q$ must satisfy $q=\E_x q^{|N_s|}$ for any $s>0$. Note that $|N(s)|\geq 1$ almost surely and $\P_x(|N(s)|>1)>0$, thus $q = \E_x q^{|N_s|}$
implies $q=0$ or $1$.
By Theorem \ref{thrm1}, $W(\lambda,x)$ is an $L^1(\P_x)$-limit and hence $q<1$. So $q=0$. Therefore $\mathbf u$ defined by \eqref{def-u-supercritical} is a pulsating travelling wave.
\end{proof}

\subsection{Existence in the critical case ($\nu=\nu^*$)}

Recall that we have under $\P_y$,
\begin{equation}\label{deriv_decomp}
\partial W(\lambda,y) \overset{d}{=} e^{-\gamma(\lambda)s} \sum_{u\in N_s} \partial W^{(u)}(\lambda,X_u(s)).
\end{equation}
An argument similar to the one used in Section \ref{section_exist} leads to the following result.

\begin{thrm}\label{thrm_exist_critical}
Suppose $|\nu|=\nu^*$ and $\mathbf E(L(\log^+L)^2)<\infty$.
Define
\begin{equation*}
\mathbf u(t,x) :=
\E_x\left(\exp\left\{ -e^{\gamma(\lambda^*)t} \partial W(\lambda^*,x) \right\} \right).
\end{equation*}
Then $\mathbf u$ is a pulsating travelling wave with speed $\nu^*$,
and
$$\mathbf u(t,x) = \E_x\left(\exp\left\{ -e^{\gamma(\lambda^*)t} \partial W(-\lambda^*,x) \right\} \right)$$
is a pulsating travelling wave with speed $-\nu^*$.
\end{thrm}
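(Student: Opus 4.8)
The plan is to run the argument of Theorem~\ref{thrm_exist_super} with the additive martingale replaced by the derivative martingale, $\lambda$ by $\lambda^*$, and the identities \eqref{mart_W_id1}--\eqref{mart_W_id2} replaced by \eqref{mart_deriv_periodic} and \eqref{deriv_decomp}. First I would note that $\mathbf u$ is $[0,1]$-valued, since $\partial W(\lambda^*,x)\in[0,\infty)$ $\P_x$-a.s.\ by Proposition~\ref{lemma_deri_conver}. Combining \eqref{deriv_decomp} with the Markov and branching properties and the conditional independence, given $\F_s$, of the $\partial W^{(u)}(\lambda^*,X_u(s))$, one obtains, for $0\le s\le t$,
\begin{equation*}
\mathbf u(t,x)=\E_x\prod_{u\in N_s}\mathbf u(t-s,X_u(s)),\qquad\text{and in particular}\qquad \mathbf u(t,x)=\E_x\prod_{u\in N_t}\mathbf u(0,X_u(t)).
\end{equation*}
As in the proof of Theorem~\ref{thrm_exist_super}, a McKean-type argument (cf.\ \cite{Mc}) then shows that $\mathbf u$ solves \eqref{KPPeq2}, and in particular that $\mathbf u(t,\cdot)$ is continuous.

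For the pulsating identity, note that $\nu^*=\gamma(\lambda^*)/\lambda^*$ gives $e^{\gamma(\lambda^*)(t+1/\nu^*)}=e^{\gamma(\lambda^*)t+\lambda^*}$; applying \eqref{mart_deriv_periodic} with $y=x-1$ and $z=x$ yields
\begin{equation*}
\mathbf u\Big(t+\tfrac{1}{\nu^*},x\Big)=\E_x\exp\{-e^{\gamma(\lambda^*)t+\lambda^*}\partial W(\lambda^*,x)\}=\E_{x-1}\exp\{-e^{\gamma(\lambda^*)t}\partial W(\lambda^*,x-1)\}=\mathbf u(t,x-1).
\end{equation*}

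It remains to check the boundary behaviour. Writing $x=n+y$ with $n=\lfloor x\rfloor$ and $y\in[0,1)$, \eqref{mart_deriv_periodic} gives $(\partial W(\lambda^*,x),\P_x)\overset{d}{=}(e^{-\lambda^*n}\partial W(\lambda^*,y),\P_y)$, hence $\mathbf u(t,n+y)=\E_y\exp\{-e^{\gamma(\lambda^*)t-\lambda^*n}\partial W(\lambda^*,y)\}$. As $n\to+\infty$ the exponent increases to $0$ (using $\partial W(\lambda^*,y)<\infty$ $\P_y$-a.s.\ from Proposition~\ref{lemma_deri_conver}), so by dominated convergence $\mathbf u(t,n+y)\uparrow 1$ pointwise in $y$; since $\mathbf u(t,\cdot)$ is continuous, Dini's theorem upgrades this to uniform convergence on $[0,1]$ and so $\lim_{x\to+\infty}\mathbf u(t,x)=1$. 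For $x\to-\infty$, the decisive point is that $\P_y(\partial W(\lambda^*,y)=0)=0$: Proposition~\ref{lemma_deri_conver} says this probability is $0$ or $1$, and Theorem~\ref{thrm2}(ii), which is where the hypothesis $\mathbf E(L(\log^+L)^2)<\infty$ enters, rules out $1$ by asserting $\partial W(\lambda^*,y)\in(0,\infty)$ $\P_y$-a.s. Then, as $n\to-\infty$, $e^{-\lambda^*n}\to+\infty$, so on the full-measure event $\{\partial W(\lambda^*,y)>0\}$ the exponent decreases to $-\infty$, giving $\mathbf u(t,n+y)\downarrow 0$ pointwise, and Dini's theorem again yields $\lim_{x\to-\infty}\mathbf u(t,x)=0$. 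This proves that $\mathbf u$ is a pulsating travelling wave with speed $\nu^*$; the assertion for speed $-\nu^*$ follows by symmetry (applying the result for $\nu^*$ to the reflected branching Brownian motion, whose environment $x\mapsto\mathbf g(-x)$ is again $C^1$, positive and $1$-periodic with the same $\nu^*,\lambda^*$ by the evenness of $\gamma$, Lemma~\ref{lemma_gamma}(2)), exactly as the negative-$\nu$ cases are treated throughout the paper.

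The genuinely hard input is not inside this argument but is the non-degeneracy $\partial W(\lambda^*,y)\in(0,\infty)$ $\P_y$-a.s.\ from Theorem~\ref{thrm2}(ii), without which $\mathbf u$ would collapse to the trivial solution $\equiv 1$; within the present proof, the steps needing care are the correct orientation of the shift in \eqref{mart_deriv_periodic} when moving the base point across an integer, and the justification, via the F-KPP equation, that $\mathbf u(t,\cdot)$ is continuous so that Dini's theorem applies (including at $t=0$).
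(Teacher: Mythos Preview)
Your proposal is correct and follows essentially the same approach as the paper: the paper's proof of Theorem~\ref{thrm_exist_critical} is a terse reference back to the argument of Theorem~\ref{thrm_exist_super}, invoking the decomposition \eqref{decomp_deriv} for the F-KPP equation, the periodicity identity for $\partial W(\lambda^*,\cdot)$ for the pulsating relation, and Theorem~\ref{thrm2}(ii) for $\P_y(\partial W(\lambda^*,y)=0)=0$ to obtain the boundary conditions. Your write-up simply makes explicit the steps (Dini's theorem on both sides, the role of Proposition~\ref{lemma_deri_conver} in the $0$--$1$ dichotomy) that the paper leaves implicit by analogy with the supercritical case.
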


\begin{proof}
We assume that $\lambda\geq 0$.  The case $\lambda < 0$ can be analyzed by symmetry.
The proof of $\mathbf u(t,x)$ being a pulsating travelling wave
is similar to the proof of Theorem \ref{thrm_exist_super}. The decomposition \eqref{decomp_deriv} implies $\mathbf u(t,x)$ satisfies the F-KPP equation \eqref{KPPeq2}. For the derivative martingale, we also have for $y-x\in\mathbb{Z}$,
\begin{equation}
(\partial W(\lambda^*,y), \P_y) \overset{d}{=} (e^{-\lambda^*(y-x)}\partial W(\lambda^*,x), \P_x).
\end{equation}
It follows that
\begin{equation*}
\mathbf u\left(t+\frac{1}{\nu^*},x\right) = \mathbf u(t,x-1).
\end{equation*}
By Theorem \ref{thrm2}, we obtain $\P_y(\partial W(\lambda^*,y)=0) = 0$ when $\mathbf E(L(\log^+L)^2)<\infty$. Therefore,
\begin{equation}
\lim_{x\rightarrow-\infty} \mathbf u(t,x) = 0, \quad \lim_{x\rightarrow +\infty} \mathbf u(t,x) = 1.
\end{equation}
So $\mathbf u(t,x)$ is a pulsating travelling wave with speed $\nu^*$.
\end{proof}

\subsection{Proof of Theorem \ref{thrm3}}

The following result about the extremes of BBMPE is a consequence  of Theorem \ref{thrm1} and \eqref{e:new}.

\begin{lemma}\label{lemma_Mt}
 Let $\widetilde m_t := \max\{X_u(t): u\in N_t \}$ and $m_t:= \min\{X_u(t): u\in N_t \}$. If $\mathbf E(L\log^+L)<\infty$, then for any $x\in \R$,
\begin{equation}\label{Mt}
\lim_{t\uparrow\infty} \frac{\widetilde m_t}{t} = \nu^*
\quad \text{and} \quad \lim_{t\uparrow\infty}
\frac{m_t}{t} = -\nu^*,
\quad \P_x\mbox{-a.s.}
\end{equation}
\end{lemma}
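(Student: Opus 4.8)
The plan is to establish $\widetilde m_t/t\to\nu^*$, and then obtain $m_t/t\to-\nu^*$ by the mirror version of the argument (run everything below with the parameter $-\lambda$ in place of $\lambda$, so that the relevant additive martingale becomes $W_t(\lambda)$ and the typical particle drifts to the left instead of the right). Throughout, fix $\lambda\in(0,\lambda^*)$. By Theorem \ref{thrm1}(iii) and the hypothesis $\mathbf E(L\log^+L)<\infty$, the nonnegative martingale $W_t(-\lambda)$ converges in $L^1(\P_x)$ to a limit $W(-\lambda,x)$, and the $0$--$1$ law behind \eqref{e:new}, applied with $-\lambda$, gives $\P_x(W(-\lambda,x)=0)=0$.

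\emph{Upper bound.} Using $\gamma(-\lambda)=\gamma(\lambda)$ (Lemma \ref{lemma_gamma}), $W_t(-\lambda)=e^{-\gamma(\lambda)t}\sum_{u\in N_t}e^{\lambda X_u(t)}\psi(X_u(t),-\lambda)\ge c\,e^{\lambda(\widetilde m_t-\frac{\gamma(\lambda)}{\lambda}t)}$ with $c=\inf_z\psi(z,-\lambda)>0$. Since a nonnegative martingale converges a.s.\ to a finite limit, $\limsup_t(\widetilde m_t-\frac{\gamma(\lambda)}{\lambda}t)<\infty$ $\P_x$-a.s., hence $\limsup_t\widetilde m_t/t\le\gamma(\lambda)/\lambda$; letting $\lambda\uparrow\lambda^*$ and using that $\gamma(\lambda)/\lambda\downarrow\nu^*$ (by \eqref{gamma_range} and Lemma \ref{lemma_com}) gives $\limsup_t\widetilde m_t/t\le\nu^*$. (One may even take $\lambda=\lambda^*$ directly: $W_t(-\lambda^*)\to0$ a.s.\ by Theorem \ref{thrm1}(ii), so in fact $\widetilde m_t-\nu^*t\to-\infty$.)

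\emph{Lower bound.} Fix $a<\gamma'(\lambda)$. On the event $\{\widetilde m_n\le an\}$ every time-$n$ particle lies in $(-\infty,an]$, so there $W_n(-\lambda)=S_n:=\sum_{u\in N_n:\,X_u(n)\le an}e^{-\gamma(\lambda)n+\lambda X_u(n)}\psi(X_u(n),-\lambda)$. By the many-to-one Lemma \ref{l:many-to-one}, rewriting the summand via the Brownian martingale of Lemma \ref{lemma_Yt} and changing measure as in Section \ref{ss:mcBB}, $\E_xS_n=\Pi_x[\Xi_n(-\lambda)\mathbf 1_{\{B_n\le an\}}]=e^{\lambda x}\psi(x,-\lambda)\,\Pi_x^{-\lambda}(Y_n\le an)$. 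Under $\Pi_x^{-\lambda}$ one has $Y_n/n\to-\gamma'(-\lambda)=\gamma'(\lambda)>a$ (Lemmas \ref{lemma_gamma} and \ref{lemma_slln}), so the large deviation principle of Lemma \ref{lemma_ldp}, whose rate function is strictly convex and vanishes only at $\gamma'(\lambda)$, yields $\Pi_x^{-\lambda}(Y_n\le an)\le Ce^{-\delta n}$ for some $\delta>0$. Hence $\sum_n\E_xS_n<\infty$, so $S_n\to0$ $\P_x$-a.s.; if $\{\widetilde m_n\le an\}$ occurred for infinitely many $n$ we would get $\liminf_nW_n(-\lambda)=0$, i.e.\ $W(-\lambda,x)=0$, which is excluded. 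Therefore $\widetilde m_n>an$ for all large $n$ a.s. A routine interpolation — follow one descendant lineage of the maximal particle at time $n$ over $[n,n+1]$ and use the summable estimate $\P(\min_{[0,1]}B\le-\sqrt n)\le e^{-n/2}$ — upgrades this to $\liminf_t\widetilde m_t/t\ge a$; letting $a\uparrow\gamma'(\lambda)$ and then $\lambda\uparrow\lambda^*$ (so $\gamma'(\lambda)\to\gamma'(\lambda^*)=\nu^*$ by Lemma \ref{lemma_com}) gives $\liminf_t\widetilde m_t/t\ge\nu^*$.

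Combining the two bounds, $\widetilde m_t/t\to\nu^*$ $\P_x$-a.s., and the mirror argument yields $m_t/t\to-\nu^*$. The main obstacle is the lower bound: there one must extract information about the leading particle of the entire population from a single scalar martingale, and this works only because $W(-\lambda,x)$ is a genuine, a.s.\ strictly positive limit rather than $0$ — precisely the place where $\mathbf E(L\log^+L)<\infty$ is needed — together with the first-moment/large-deviation control of the number of ``slow'' particles and the passage $\lambda\uparrow\lambda^*$. The upper bound, by contrast, is immediate from martingale convergence, and the integer-to-continuous-time interpolation is purely technical.
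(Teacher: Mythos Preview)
Your proof is correct. The upper bound is essentially the paper's argument (martingale convergence forces $\widetilde m_t-\frac{\gamma(\lambda)}{\lambda}t$ to stay bounded above), just written for $\widetilde m_t$ and $W_t(-\lambda)$ rather than for $m_t$ and $W_t(\lambda)$. The lower bound, however, follows a genuinely different route. The paper never passes to integer times or invokes the LDP; instead it compares two nearby additive martingales directly: for $\lambda=\lambda^*-\delta$ and small $\epsilon>0$ it bounds the ``slow'' part of $W_t(\lambda)$ (the contribution of particles with $X_u(t)\ge(-\gamma'(\tilde\lambda)+\epsilon)t$) by $\tfrac{C_2}{C_1}e^{-\epsilon^2 t}W_t(\lambda-\epsilon)\to 0$, using the mean value theorem $\gamma(\lambda)-\gamma(\lambda-\epsilon)=\gamma'(\tilde\lambda)\epsilon$. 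Hence the full limit $W(\lambda,x)>0$ is carried entirely by fast particles, which forces such particles to exist for all large $t$ simultaneously, giving the continuous-time lower bound without interpolation. Your approach instead controls the first moment of the slow contribution via many-to-one, the measure change $\Pi_x^{-\lambda}$, and the exponential LDP estimate of Lemma~\ref{lemma_ldp}, deducing $S_n\to 0$ a.s.\ along integers, and then interpolates by tracking one Brownian lineage over $[n,n+1]$. Both arguments hinge on the same key fact, $\P_x(W(-\lambda,x)=0)=0$, which is where $\mathbf E(L\log^+L)<\infty$ enters. The paper's method is slightly slicker in that it stays in continuous time and avoids the LDP machinery, while your first-moment route is perhaps more transparent and reuses the spine tools already developed in Section~\ref{ss:mcBB}.
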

\begin{proof}
We first show that
\begin{equation}
\limsup_{t\uparrow\infty} \frac{-m_t}{t} \leq \nu^*.
\end{equation}
If this were not true, there would exist $\lambda>\lambda^*$ such that
\begin{equation}
\limsup_{t\uparrow\infty} \frac{-m_t}{t} > \frac{\gamma(\lambda)}{\lambda} > \nu^*.
\end{equation}
Hence,
\begin{equation}
W_t(\lambda) \geq
e^{-\lambda m_t-\gamma(\lambda)t} \psi(m_t,\lambda)
= e^{\lambda t\left(\frac{-m_t}{t}-\frac{\gamma(\lambda)}{\lambda}\right)} \psi(m_t,\lambda).
\end{equation}
Then we have
\begin{equation*}
\limsup_{t\uparrow\infty} W_t(\lambda) \geq \limsup_{t\uparrow\infty}
e^{\lambda t \left(\frac{-m_t}{t} - \frac{\gamma(\lambda)}{\lambda}\right)} \psi(m_t,\lambda) = +\infty,
\end{equation*}
which contradicts Theorem \ref{thrm1}.

Next we show that
\begin{equation}
\liminf_{t\uparrow\infty} \frac{-m_t}{t} \geq \nu^*.
\end{equation}
For any small $\delta, \epsilon>0$,
let $\lambda = \lambda^*-\delta$. By the mean value theorem, there
exists $\tilde\lambda\in (\lambda-\epsilon,\lambda)$ with
\begin{equation}\label{mean_value}
\gamma(\lambda) - \gamma(\lambda-\epsilon) = \gamma'(\tilde\lambda) \epsilon.
\end{equation}
For any fixed $\lambda$ and $\lambda-\epsilon$,
there exist $C_1,C_2>0$ such that
$C_1\leq \psi(x,\lambda), \psi(x,\lambda-\epsilon) \leq C_2$ for any $x\in\R$.
Using an argument similar to that of \cite[Corollary 3.2]{KLMR}, we get that
\begin{align*}
\limsup_{t\uparrow\infty}\; &e^{-\gamma(\lambda)t} \sum_{u\in N_t} e^{-\lambda X_u(t)} \psi(X_u(t),\lambda) \textbf{1}_{\{X_u(t) \geq (-\gamma'(\tilde\lambda)+\epsilon)t  \}}\\
\leq &\limsup_{t\uparrow\infty} C_2 e^{-\gamma(\lambda)t} \sum_{u\in N_t} e^{-(\lambda-\epsilon) X_u(t)} e^{-\epsilon X_u(t)} \textbf{1}_{\{X_u(t) \geq (-\gamma'(\tilde\lambda)+\epsilon)t  \}}\\
\leq &\limsup_{t\uparrow\infty} C_2 e^{-\gamma(\lambda)t} \sum_{u\in N_t} e^{-(\lambda-\epsilon) X_u(t)} e^{-\epsilon (-\gamma'(\tilde\lambda)+\epsilon)t }\\
= &\limsup_{t\uparrow\infty} C_2 e^{-(\gamma(\lambda)-\gamma'(\tilde\lambda)\epsilon)t} \sum_{u\in N_t} e^{-(\lambda-\epsilon) X_u(t)} e^{-\epsilon^2 t}\\
\leq & \limsup_{t\uparrow\infty} \frac{C_2}{C_1}  e^{-\gamma(\lambda-\epsilon)t} \sum_{u\in N_t} e^{-(\lambda-\epsilon) X_u(t)} \psi(X_u(t),\lambda-\epsilon) e^{-\epsilon^2 t}\\
= & \limsup_{t\uparrow\infty} \frac{C_2}{C_1} e^{-\epsilon^2 t} W_t(\lambda-\epsilon) = 0,
\end{align*}
where in the last inequality we used  \eqref{mean_value}.
Therefore,
\begin{equation}
\lim_{t\uparrow\infty} e^{-\gamma(\lambda)t} \sum_{u\in N_t} e^{-\lambda X_u(t)} \psi(X_u(t),\lambda) \textbf{1}_{\{X_u(t) < (-\gamma'(\tilde\lambda)+\epsilon)t  \}} = W(\lambda, x),\quad \P_x\mbox{-a.s.}
\end{equation}
By \eqref{e:new}, $\P_x(W(\lambda,x)=0) = 0$.
Thus the previous limit implies that
\begin{equation}
\liminf_{t\uparrow\infty} \mathbf{1}_{\{\exists \; u\in N_t: X_u(t) < (-\gamma'(\tilde\lambda)+\epsilon)t\}} > 0.
\end{equation}
This yields
\begin{equation}
\liminf_{t\uparrow\infty}
\frac{-m_t}{t} \geq \gamma'(\tilde\lambda) - \epsilon.
\end{equation}
Since $\epsilon,\delta$ are arbitrary and $\gamma'$ is continuous, we obtain
$\liminf\limits_{t\uparrow\infty}
 \frac{-m_t}{t} \geq \gamma'(\lambda^*) = \nu^*$.
Thus
\begin{equation}
\lim_{t\uparrow\infty} \frac{m_t}{t} = -\nu^*.
\end{equation}
Using the evenness of $\gamma(\lambda)$ and an argument similar as above, we can easily get that
$\lim_{t\uparrow\infty} \frac{\widetilde m_t}{t} = \nu^*$.
\end{proof}

\begin{proof}[Proof of Theorem \ref{thrm3}]

(i) follows from Theorems \ref{thrm_exist_super}.
(ii) follows from  Theorems \ref{thrm_exist_critical}.
Now we prove (iii). If the conclusion were false,  let $\mathbf u(t,x)$ denote the pulsating travelling wave with speed $\nu<\nu^*$.
By the uniqueness of solutions of initial value problem
\begin{align*}
\frac{\partial \mathbf u}{\partial t} = \frac{1}{2} \frac{\partial^2 \mathbf u}{\partial x^2} +
\mathbf g\cdot(\mathbf f(\mathbf u)-\mathbf u),
\end{align*}
with initial value $\mathbf u(0,x)$, we have
\begin{equation}
\mathbf u(t,x) = \E_x \prod_{u\in N_t} \mathbf u(0,X_u(t)).
\end{equation}
Noting that $\mathbf u(t+\frac{1}{\nu},x) = \mathbf u(t,x-1)$, we get that for $\nu t\in \N$,
\begin{align}
\mathbf u(0,x) = \mathbf u(t, x+\nu t) = \E_{x+\nu t} \prod_{u\in N_t} \mathbf u(0,X_u(t)) = \E_{x} \prod_{u\in N_t} \mathbf u(0,X_u(t)+\nu t),
\end{align}
where the last equality follows from the periodicity. Since $0\leq \mathbf u(t,x)\leq 1$, by the dominated convergence theorem, we have
\begin{align*}
\mathbf u(0,x) &= \lim_{t\rightarrow\infty, \nu t\in\N} \E_{x} \prod_{u\in N_t} \mathbf u(0,X_u(t)+\nu t) = \E_{x} \lim_{t\rightarrow\infty, \nu t\in\N} \prod_{u\in N_t} \mathbf u(0,X_u(t)+\nu t)\\ &\leq \E_x \lim_{t\rightarrow\infty, \nu t\in\N} \mathbf u(0, m_t + \nu t) = 0,
\end{align*}
here we used $\lim_{t\uparrow\infty} (m_t + \nu t) = -\infty$ $\P_x$-almost surely,
which follows from Lemma \ref{lemma_Mt}.
This leads to a contradiction.
\end{proof}

\section{Appendix}
Recall that, for any $c>0$, $A_c$ is defined in \eqref{def-Ac}.
\begin{lemma}\label{Prob-Ac}
For any $c>0$,
$\widetilde{\P}_y^{(x,\lambda)}(A_c) = 0$,
\end{lemma}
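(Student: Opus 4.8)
The plan is to show that the random integral defining $A_c$ is almost surely infinite by analyzing the behavior of the Bessel-3 process $\{x+h(y)+R_t\}$ under $\widetilde{\P}_y^{(x,\lambda)}$. Write $\rho_t := x+h(y)+R_t$, so that under $\widetilde{\P}_y^{(x,\lambda)}$, $\{\rho_t,\, t\geq 0\}$ is a Bessel-3 process started at $\rho_0 = x+h(y)>0$. The event $A_c$ asks that
$$
\int_0^\infty (m+1) \sum_k \tilde p_k \, \mathbf{1}_{\{k\rho_t e^{-\lambda^* R_t}\geq M\}}\, \mathrm{d}t \leq c,
$$
and since $R_t = \rho_t - (x+h(y))$, the indicator becomes $\mathbf{1}_{\{k\rho_t e^{-\lambda^*\rho_t}\geq M'\}}$ for a suitable constant $M'$. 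The key observation is that the quantity $\sum_k \tilde p_k \mathbf{1}_{\{k\rho_t e^{-\lambda^*\rho_t}\geq M'\}}$ is bounded below by $\tilde p_{k_0}$ whenever $\rho_t$ lies in a fixed bounded window, say $\rho_t\in[a,b]$ with $0<a<b<\infty$ chosen so that $k_0 a e^{-\lambda^* a}\wedge k_0 b e^{-\lambda^* b}\geq M'$ for some $k_0$ with $\tilde p_{k_0}>0$ (such $k_0$ exists because the offspring law is non-degenerate with $m>0$). Thus it suffices to prove that the Bessel-3 occupation time $\int_0^\infty \mathbf{1}_{\{\rho_t\in[a,b]\}}\,\mathrm{d}t$ is almost surely infinite.

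The main step is therefore: \emph{the total occupation time of any nonempty bounded interval $[a,b]\subset(0,\infty)$ by a Bessel-3 process is a.s.\ infinite}. This is a standard fact about the Bessel-3 process (equivalently, the radial part of three-dimensional Brownian motion, or Brownian motion conditioned to stay positive): although $\rho_t\to\infty$ a.s., the process is point-recurrent from above in the sense that it returns to any level infinitely often on a set of excursions whose total length at that level diverges. I would make this precise via the Ray--Knight theorem for the local time of Bessel-3 at a fixed level $\ell\in(a,b)$: the local time $L^\ell_\infty$ at level $\ell$ accumulated over all time is an exponential (hence a.s.\ strictly positive and finite) random variable for $\ell\geq\rho_0$, but the occupation-time formula $\int_0^\infty \mathbf{1}_{\{\rho_t\in[a,b]\}}\,\mathrm{d}t = \int_a^b L^\ell_\infty\,\mathrm{d}\ell$ must instead be handled with care — actually the cleaner route is to note that each downcrossing of $[\ell_1,\ell_2]$ (with $a\le\ell_1<\ell_2\le b$) contributes, in expectation, a positive amount of time in $[a,b]$, and there are infinitely many such downcrossings a.s.\ because the scale function $s(r)=-1/r$ of Bessel-3 makes the probability of ever going below $\ell_1$ from $\ell_2$ equal to a fixed positive number, applied repeatedly via the strong Markov property at successive hitting times of $\ell_2$. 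Alternatively, and most directly, I would invoke the argument of \cite{YR} referenced in the text: conditionally on $\mathcal G$ (the spine's spatial path) this is exactly the computation carried out there, so one transfers their estimate verbatim.

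The expected main obstacle is making the recurrence/occupation-time argument fully rigorous while keeping track of the conditioning structure: $R_t$ is defined through the time-change $T(t)$ and the spine's generator, so one must first confirm (as the excerpt does via \cite{Imhof} and Lemma \ref{lemma_mart_h}) that under $\widetilde{\P}_y^{(x,\lambda)}$ the process $\{x+h(y)+R_t\}$ genuinely is a Bessel-3 process, and then the divergence of the occupation time is a property of that process alone, independent of all other randomness (the Poisson marks $A_{\xi_n}$, the subtrees). Once that reduction is clean, one applies a Borel--Cantelli / second-moment argument or the strong Markov property at level crossings to conclude $\int_0^\infty \mathbf{1}_{\{\rho_t\in[a,b]\}}\,\mathrm{d}t=\infty$ a.s., hence $\widetilde{\P}_y^{(x,\lambda)}(A_c)=0$ for every $c>0$. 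I expect the bookkeeping around constants ($M$ vs.\ $M'$, the choice of $k_0$, $a$, $b$) to be routine, and the only genuinely substantive input to be the transience-but-infinite-local-occupation property of Bessel-3, which can be cited or proved in a few lines.
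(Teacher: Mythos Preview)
Your proposal contains a fundamental error: the claim that ``the total occupation time of any nonempty bounded interval $[a,b]\subset(0,\infty)$ by a Bessel-3 process is a.s.\ infinite'' is false. The Bessel-3 process is transient; in fact $\rho_t\to\infty$ a.s.\ and the last exit time from any compact set is finite a.s., so the occupation time of $[a,b]$ is a.s.\ finite. (Equivalently, by the Ray--Knight description the total local time $L^\ell_\infty$ at each level $\ell$ is a.s.\ finite, and hence $\int_a^b L^\ell_\infty\,\mathrm d\ell<\infty$ a.s.) Your own scale-function computation shows that from level $\ell_2$ the process reaches $\ell_1<\ell_2$ only with probability $\ell_1/\ell_2<1$, so there are not infinitely many downcrossings.

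The deeper issue is that you have not used the hypothesis $\mathbf E\bigl(L(\log^+L)^2\bigr)=\infty$ at all; the lemma is only asserted (and only true) under this assumption, which is the standing assumption in the part of the proof of Theorem~\ref{thrm_V}(ii) where $A_c$ is introduced. The paper's argument (following \cite{YR}) exploits it as follows: for each $k$ one shows that the indicator $\mathbf 1_{\{k\rho_t e^{-\lambda^*\rho_t}\ge M'\}}$ dominates $\mathbf 1_{\{1+\tilde x\le |W_t|\le (\log k)/\lambda^*\}}$ for large $k$, and the Bessel-3 occupation time of this \emph{growing} interval has size of order $(\log k)^2$ (via the identity $L^a_\infty\overset{d}{=}aL^1_\infty$ from the BESQ$^2(0)$ Ray--Knight theorem). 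Summing over $k$ with weights $\tilde p_k$ and using $\sum_k\tilde p_k(\log^+k)^2=\infty$ then forces $\widetilde{\P}_y^{(x,\lambda)}(A_c)=0$. Picking a single $k_0$ and a fixed bounded window, as you do, cannot work precisely because the occupation time there is finite.
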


\begin{proof}  The proof is almost the same as the proof \cite[(8)]{YR}.
The only changes are some notation and fixing of a few typos.
Note that under $\widetilde{\P}_y^{(x,\lambda)}$, $x - \frac{\phi_{\lambda}(y,\lambda^*)}{\phi(y,\lambda^*)} + R_t$ is a Bessel-3 process starting from $x- \frac{\phi_{\lambda}(y,\lambda^*)}{\phi(y,\lambda^*)}$.
For simplicity, we still use $\tilde x$
to denote $x- \frac{\phi_{\lambda}(y,\lambda^*)}{\phi(y,\lambda^*)}$. Then $\tilde x+R_t$ has the same law as the modulus process of $W_{t}+\hat{x}$, where
    $\left\{W_{t}, t\geq 0; \mathbf{P}_w\right\}$
is a three-dimensional standard Brownian motion and $\hat{x}$ is a point in $\mathbb{R}^{3}$ with norm $\tilde x$. We still use $A_{c}$ to denote the same set corresponding to $\left\{W_{t}, t\geq 0;\mathbf{P}_w\right\}$.

\begin{align}\label{YR_9}
c &\geq \widetilde{\P}_y^{(x,\lambda)} \left( \mathbf{1}_{A_c} \int_0^{+\infty} (m+1) \sum_{k\in\mathbb{Z}_+} \tilde{p}_k \mathbf{1}_{\left\{k(\tilde x + R_t) e^{-\lambda^* R_t} \geq M\right\} } \mathrm{d}t \right)\notag \\
&=\int_0^{+\infty} (m+1) \sum_{k\in\mathbb{Z}_+} \tilde{p}_k \widetilde{\P}_y^{(x,\lambda)}(\mathbf{1}_{A_c} \mathbf{1}_{\left\{(\tilde x + R_t) e^{-\lambda^*(\tilde x+R_t) } \geq Mk^{-1}e^{-\lambda^*\tilde x} \right\} } )   \mathrm{d}t \notag \\
&= (m+1) \sum_{k\in\mathbb{Z}_+} \tilde{p}_k \int_0^{+\infty} \mathbf{P}_w\left(\mathbf{1}_{A_c} \mathbf{1}_{\left\{ |W_t+\hat{x}|e^{-\lambda^*|W(t)+\hat{x}|} \geq Mk^{-1}e^{-\lambda^*\tilde x} \right\} } \right)  \mathrm{d}t.
\end{align}

We claim that there exists $K_{1}>1$ such that when $k \geq K_{1}$
\begin{equation}\label{YR_10}
\left\{y \in \mathbb{R}^{3}: 1+\tilde x \leq|y| \leq \frac{\log k}{ \lambda^*}\right\} \subset\left\{y \in \mathbb{R}^{3}:|y+\hat{x}| e^{-\lambda^*|y+\hat{x}|} \geq M k^{-1} e^{-\lambda^* \tilde x}\right\} .
\end{equation}
In fact, $1+\tilde x \leq|y| \leq \frac{\log k}{ \lambda^*}$ implies $1 \leq|y+\hat{x}| \leq \frac{\log k}{\lambda^*}+\tilde x.$ Consider the function $f(x)=\tilde x e^{-\lambda^*\tilde x} .$ On the positive half line, it
increases to a supremum and then decreases to 0 as $x$ goes to infinity. Thus we can find $K_{1}>1$ large enough such that when $k \geq K_{1}$,
\begin{align*}
1+\tilde x \leq|y| \leq \frac{\log k}{\lambda^*} & \Rightarrow f(|y+\hat{x}|) \geq f\left(\frac{\log k}{ \lambda^*}+\tilde x\right) \\
& \Rightarrow|y+\hat{x}| e^{-\lambda^*|y+\hat{x}|} \geq\left(\frac{\log k}{ \lambda^*}+\tilde x\right) k^{-1} e^{-\lambda^*\tilde x}.
\end{align*}
Thus \eqref{YR_10} is valid.

We continue the estimate \eqref{YR_9} when $k \geq K_{1}$,
\begin{align}\label{YR_11}
c & \geq (m+1) \sum_{k: k \geq K_{1}} \tilde{p}_{k} \int_{0}^{+\infty} \mathbf{P}_w \left(\mathbf{1}_{A_{c}} \mathbf{1}_{\left\{1+\tilde x \leq\left|W_{t}\right| \leq \frac{\log k}{\lambda^*}\right\}}\right) \mathrm{d} t \notag \\
&=(m+1) \sum_{k: k \geq K_{1}} \tilde{p}_{k} \mathbf{P}_w \left(\mathbf{1}_{A_{c}} \int_{0}^{+\infty} \mathbf{1}_{\left\{1+\tilde x \leq\left|W_{t}\right| \leq \frac{\log k}{\lambda^*}\right\}} \mathrm{d} t\right).
\end{align}
$\left(\left|W_{t}\right|, t\geq 0;\mathbf{P}_w\right)$ is a Bessel-3 process starting from 0. Let $\left\{l^{a}: a \geq 0\right\}$ be the family of its local times, then the process $\left\{l_{\infty}^{a}, a \geq 0\right\}$ is a BESQ$^{2}(0)$ process which implies $l_{\infty}^{a} \overset{d}{=} a l_{\infty}^{1}$ and $\mathbf{P}_w\left(l_{\infty}^{1}=0\right)=0$ (see Revuz and Yor \cite{RY}, p. 425, Ex. 2.5). Thus
\begin{align}\label{YR_12}
    \mathbf{P}_w \left(\mathbf{1}_{A_{c}} \int_{0}^{+\infty} 1_{\left\{1+\tilde x \leq\left|W_{t}\right| \leq \frac{\log k}{\lambda^*}\right\}} \mathrm{d} t\right) &=\mathbf{P}_w \left(1_{A_{c}} \int_{1+\tilde x}^{\frac{\log k}{\lambda^*}} l_{\infty}^{a} \mathrm{d} a\right) \notag \\
	&=\mathbf{P}_w \left(\mathbf{1}_{A_{c}} \int_{1+\tilde x}^{\frac{\log k}{\lambda^*}} a \mathrm{d} a \int_{0}^{a^{-1} l_{\infty}^{a}} \mathrm{d} u\right) \notag \\
	&=\int_{1+\tilde x}^{\frac{\log k}{\lambda^*}} a \mathrm{d} a \int_{0}^{+\infty} \mathbf{P}_w \left(\mathbf{1}_{A_{c}} \mathbf{1}_{\left\{u \leq a^{-1} l_{\infty}^{a}\right\}}\right) \mathrm{d} u.
\end{align}
Note that
\begin{equation*}
\mathbf{P}_w\left(\mathbf{1}_{A_{c}} \mathbf{1}_{\left\{u \leq a^{-1}l_{\infty} ^{a}\right\}}\right) \geq \left(\mathbf{P}_w\left(A_{c}\right) - \mathbf{P}_w\left(a^{-1} l_{\infty}^{a}<u\right)\right)^{+} = \left(\mathbf{P}_w\left(A_{c}\right) - \mathbf{P}_w\left(l_{\infty}^{1}<u\right)\right)^{+},
\end{equation*}
and there exist  $C>0$ and $K_{2}>1$ such that for  $k \geq K_{2}$
\begin{equation*}
\int_{1+\tilde x}^{\frac{\log k}{\lambda^*}} a \mathrm{d} a=\frac{1}{2}\left(\left(\frac{\log k}{ \lambda^*}\right)^{2}-(1+\tilde x)^{2}\right) \geq C(\log k)^{2}.
\end{equation*}
Then \eqref{YR_12} implies
\begin{equation}\label{YR_13}
\mathbf{P}_w \left(\mathbf{1}_{A_{c}} \int_{0}^{+\infty} \mathbf{1}_{\left\{1+\tilde x \leq\left|W_{t}\right| \leq \frac{\log k}{\lambda^*}\right\}} \mathrm{d} t\right) \geq C(\log k)^{2} \int_{0}^{\infty}\left(\mathbf{P}_w \left(A_{c}\right)-\mathbf{P}_w \left(l_{\infty}^{1}<u\right)\right)^{+} \mathrm{d}u.
\end{equation}
Set $K=K_{1} \vee K_{2}.$ Using \eqref{YR_11} and \eqref{YR_13} we get
\begin{equation}\label{YR_14}
\sum_{k: k \geq K} \tilde{p}_{k}(\log k)^{2} \int_{0}^{+\infty} \left(\mathbf{P}_w \left(A_{c}\right) - \mathbf{P}_w\left(l_{\infty}^{1}<u\right)\right)^{+} \mathrm{d} u<+\infty.
\end{equation}
The assumption $\mathbf E (L\left(\log ^{+} L\right)^{2})=+\infty$ is equivalent to $\sum_{k \in \mathbb{Z}_{+}} \tilde{p}_{k}\left(\log ^{+} k\right)^{2}=+\infty$. Then by \eqref{YR_14},
\begin{equation*}
\int_{0}^{+\infty}\left(\mathbf{P}_w\left(A_{c}\right) - \mathbf{P}_w\left(l_{\infty}^{1}<u\right)\right)^{+} \mathrm{d} u=0.
\end{equation*}
Thus $\mathbf{P}_w(A_{c})=0$ since $\mathbf{P}_w\left(l_{\infty}^{1}=0\right)=0$,
consequently $\widetilde{\P}_y^{(x,\lambda)}(A_c) = 0$.
\end{proof}



\begin{thebibliography}{99}



\bibitem{BK04}  J. D. Biggins and A. E. Kyprianou. Measure change in multitype branching.
\emph{Adv.
in Appl. Probab.} {\bf36} (2004) 544-581.

\bibitem{Bramson78} M. Bramson. Maximal displacement of branching Brownian motion. \emph{Common. Pure Appl. Math.} {\bf31} (1978) 531-581.

\bibitem{Bramson83} M. Bramson. Convergence of solutions to the Kolmogorov equation to travelling waves. \emph{Mem. Amer. Math. Soc.} {\bf44} (1983) iv+190 pp.


\bibitem{BH}  H. Berestycki and F. Hamel. Front propagation in periodic excitable media. \emph{Comm. Pure Appl.
Math.} {\bf 55} (2002) 949-1032.

\bibitem{CR88} B. Chauvin and A. Rouault. KPP equation and supercritical branching Brownian motion in the subcritical speed area. Application to spatial trees. \emph{Probab. Theory Related Fields} {\bf80} (1988) 299-314.


\bibitem{Durrett} R. Durrett. \emph{Probability: Theory and Examples.} Cambridge University Press,
 Cambridge,
fourth edition, 2010.

\bibitem{DZ} A. Dembo and O. Zeitouni. \emph{Large deviations techniques and applications, volume 38 of Stochastic Modelling and Applied Probability.} Springer-Verlag, Berlin, 2010.


\bibitem{Dynkin} E. B. Dynkin. Path processes and historical superprocesses. \emph{Probab. Theory Related Fields} {\bf 90} (1991), 1-36.


\bibitem{Fish}  R. A. Fisher. The wave of advance of advantageous genes. \emph{Ann. Eugenics} {\bf7} (1937)
355-369.

\bibitem{Harris99} S. C. Harris. Travelling waves for the F-K-P-P equation via probabilistic arguments. \emph{Proc. Roy. Soc. Edinburgh Sect. A} {\bf129} (1999) 503-517.	

\bibitem{HH09}  R. Hardy and S. C. Harris. A spine approach to branching diffusions with applications to $L^p$-convergence of martingales.
S\'{e}minaire de Probabilit\'{e}s
{\bf XLII} (2009) 281-330.

\bibitem{HR09} S. C. Harris and M. I. Roberts. Measure changes with extinction. \emph{Stat. Probab. Lett.}
{\bf 79} (2009) 1129-1133.

\bibitem{HR17} S. C. Harris and M. I. Roberts. The many-to-few lemma and multiple spines. \emph{Ann. Inst. Henri Poincar\'{e} Probab. Stat.}
{\bf53} (2017) 226-242.

\bibitem{H08}
F. Hamel. Qualitative properties of monostable pulsating fronts: exponential decay and monotonicity.
\emph{J. Math. Pures Appl.} {\bf 89} (2008) 355-399.

\bibitem{HR} F. Hamel, and L. Roques. Uniqueness and stability properties of monostable pulsating fronts.
\emph{J. Eur. Math. Soc.} {\bf 13} (2011) 345-390.

\bibitem{HNRR} F. Hamel, J. Nolen, J.-M. Roquejoffre and L. Ryzhik. The logarithmic delay of KPP fronts in a periodic medium. \emph{J. Eur. Math. Soc. (JEMS)} {\bf18} (2016) 465-505.

\bibitem{Imhof} J.-P. Imhof. Density factorizations for Brownian motion,
meander and the three-dimensional Bessel process, and
applications. \emph{J. Appl. Probab.} {\bf 21} (1984) 500-510.

\bibitem{Jagers} P. Jagers. General branching processes as Markov fields. \emph{Stochastic Process.
Appl.}   {\bf 32} (1989) 183-212.

\bibitem{KPP} A. Kolmogorov, I. Petrovskii and N. Piskounov. \'{E}tude de I'\'{e}quation de la diffusion avec croissance de la quantit\'{e} de la mati\`{e}re at son application a un probl\`{e}m biologique. \emph{Moscow Univ.Math. Bull.} {\bf1} (1937) 1-25.


\bibitem{Ky} A. E. Kyprianou. Travelling wave solution to the K-P-P equation: Alternatives to Simon Harris' probabilistic analysis. \emph{Ann. Inst. Henri Poincar\'{e} Probab. Stat.}
{\bf40} (2004) 53-72.

\bibitem{KLMR} A. E. Kyprianou, R.-L. Liu, A. Murillo-Salas and Y.-X. Ren. Supercritical super-Brownian motion with a general branching mechanism and travelling waves. \emph{Ann. Inst. Henri Poincar\'{e} Probab. Stat.} {\bf48} (2012) 661-687.

\bibitem{LTZ} E. Lubetzky, C. Thornett, and O. Zeitouni.
Maximum of branching Brownian motion in a periodic environment.
\emph{Ann. Inst. Henri Poincar\'{e} Probab. Stat.}, to appear.

\bibitem{LPP95} R. Lyons, R. Pemantle, and Y. Peres. Conceptual proofs of $L\log L$ criteria for mean behavior of branching processes. \emph{Ann. Probab.} {\bf 23} (1995) 1125-1138.


\bibitem{Maillard} P. Maillard. Branching Brownian motion with selection. Ph.D. thesis, 2012. Available at arXiv:1210.3500.

\bibitem{Mc} H. P. McKean. Application of Brownian motion to the equation of Kolmogorov-Petrovskii-Piskunov. \emph{Comm. Pure Appl. Math.} {\bf28} (1975) 323-331.


\bibitem{RS} Y.-X. Ren and R. Song. Spine decomposition for branching Markov processes
and its applications. 2020. Available at arXiv:2007.12495.


\bibitem{RSYb} Y.-X. Ren, R. Song, and F. Yang. Branching Brownian motion in a periodic environment and
uniqueness of pulsating travelling waves.


\bibitem{RY} D. Revuz and M. Yor. \emph{Continuous Martingales and Brownian Motion.} Springer-Verlag,
Berlin,
1991.

\bibitem{YR} T. Yang and Y.-X. Ren. Limit theorem for derivative martingale at criticality w.r.t. branching Brownian motion. \emph{Statist. Probab. Lett.} {\bf 81} (2011) 195-200.


\end{thebibliography}
\end{document}